\theoremstyle{definition}
\numberwithin{equation}{section}
\title{Dynamical behaviour of a Soap Film Bridge driven by an Electrostatic Force}
\author{Lina Sophie Schmitz}
\date{August 10, 2024}
\address{Institut f\"ur Angewandte Mathematik, Leibniz Universit\"at Hannover, Welfengarten 1 \\ D-30167 Hannover, Germany}
\email{schmitz@ifam.uni-hannover.de}
\newtheoremstyle{common}
	{}    
	{}    
    {\itshape}
    {0em}
    {\bfseries}
    {}
    {.5em}
    {}
\theoremstyle{common}
\numberwithin{subsection}{section}
\numberwithin{figure}{section}
\newtheorem{thm}{{\bf Theorem}}
\numberwithin{thm}{section}
\newtheorem{lem}[thm]{{\bf Lemma}}
\newtheorem{cor}[thm]{{\bf Corollary}}
\newtheorem{prop}[thm]{{\bf Proposition}}
\numberwithin{equation}{section}
\newtheoremstyle{commondef}  
    {6pt}
    {6pt}
    {}
    {0em}
    {\bfseries}
    {}
    {.5em}
    {}
\theoremstyle{commondef}
\newtheorem*{remn}{{\bf Remark}}
\newtheorem{bem}[thm]{{\bf Remark}}
\renewenvironment{proof}{{\bf Proof}.}{\qed\\}
\begin{document}

\begin{abstract}
We study a free boundary problem describing a tubular soap film bridge driven by an electrostatic force. To realize the electrostatic force, the soap film bridge is placed inside an outer metal cylinder and a voltage difference between this cylinder and the soap film is applied. Our model consists of a quasilinear parabolic
equation for the evolution of the film coupled with an elliptic equation for the electrostatic potential in the unknown domain
between outer cylinder and soap film bridge. For the rotationally symmetric case, we prove local well-posedness
of this free boundary problem by recasting it as a single quasilinear parabolic equation with a non-local
source term. Moreover, for large applied voltages, we show that solutions do not exist globally for a wide class of initial values. 
\end{abstract}

\keywords{free boundary problem, well-posedness, finite time singularity, surface tension, electrostatics}
\subjclass[2020]{35R35, 35R37, 35K93, 35M33, 35Q99 }
\maketitle

\allowdisplaybreaks

\section{Introduction}

The interplay between surface tension and electrostatics is the underlying mechanism of many processes taking place on small length-scales. There are various examples such as the fabrication of microstructures \cite{KW95}, electrowetting \cite{BB12} as well as the whole field of micro-electro-mechanical systems (MEMS) \cite{ FMCCS05, Pelesko03}. The latter includes tiny sensors and switches, used everywhere in modern technology. Probably also due to this great potential for applications, a variety of mathematical models for the interplay between surface tension and electrostatics have been proposed and investigated. These models comprise variational models for charged drops \cite{MN16,MNR22} as well as different types of MEMS-models, like singular equations \cite{BGP00,EGG10,Pelesko03}, free boundary problems \cite{ELW14, ELW15, LW17} or transmission problems \cite{LW23}. While many models put a special focus on the effect of electrostatics as the main destabilizing effect, our paper is concerned with a new mathematical model for a prototypical set-up in which also surface tension has the ability to break it. The set-up itself was suggested in \cite{Moulton08}, see also \cite{MP08, MP09}, and consists of a tiny soap film spanned between two parallel metal rings of equal size and subjected to an external electrostatic force. For the precise set-up we refer to Figure \ref{setup}. In contrast to the proposed model in \cite{Moulton08,MP09}, which describes static film deflections and consists of a singular ordinary differential equation, we focus on the dynamical behaviour of the soap film bridge which we describe rather by a free boundary problem: 

 \begin{figure}[h]
\begin{tikzpicture}
\begin{axis}[samples=200, axis x line=none, axis y line=none, domain=-1:1, clip=false]
\addplot[color=cyan, thick]({0.7*x},{-1+0.17*(1-x^2)^0.5});
\addplot[color=cyan, thick]({0.7*x},{1+0.17*(1-x^2)^0.5});
\addplot[color=red, thick]({1.1*x},{1+0.3*(1-x^2)^0.5});
\addplot[color=red!40, thick]({1.1*x},{-1+0.3*(1-x^2)^0.5});
\addplot[->, domain=-1.2:1.5]({0},{x});
\addplot[domain=-0.02:0.02]({x},{-1});
\addplot[domain=-0.02:0.02]({x},{1});

\addplot[color=red, thick]({-1.1},{x});
\addplot[color=red, thick]({1.1},{x});
\addplot[->, domain=-1.2:1.5]({0},{x});

\addplot[color=blue, thick]({cosh(x)/cosh(1)-0.3},{x});
\addplot[color=blue, thick]({-cosh(x)/cosh(1)+0.3},{x});

\addplot[color=cyan,thick]({0.7*x},{(1-0.17*(1-x^2)^0.5)});
\addplot[color=cyan, thick]({0.7*x},{-1-0.17*(1-x^2)^0.5});

\addplot[color=red,thick]({1.1*x},{(1-0.3*(1-x^2)^0.5)});
\addplot[color=red, thick]({1.1*x},{-1-0.3*(1-x^2)^0.5});

\end{axis}

\draw [->, bend angle=45, bend left]  (6.8,4) to (6.3,3.6);
\node[text width=3.5cm] at (8.3,4.7) {\begin{small}rigid cylinder \\ held at positive potential \end{small}};

\draw [->, bend angle=30, bend right]  (6.6,2.7) to (4.8,2);
\node[text width=3.5cm] at (8.3,2.7) {\begin{small} soap film \\ \ held at potential $0$ \end{small}};

\draw [->, bend angle=30, bend left]  (2.8,5.1) to (3,4.7);
\node[text width=3.5cm] at (2.6,5.3) {\begin{small} fixed boundary \end{small}};
\end{tikzpicture}

\caption[Soap Film Bridge in an Electric Field]{Depiction of the soap film (blue) inside an outer metal cylinder (red). The film, which is surrounded by air, is fixed at two parallel metal rings of equal size (light blue) whereas the remaining part of the film is free to move (dark blue). 
Applying a voltage between the film and the outer cylinder changes the shape of the film. \label{setup}
}
\end{figure}
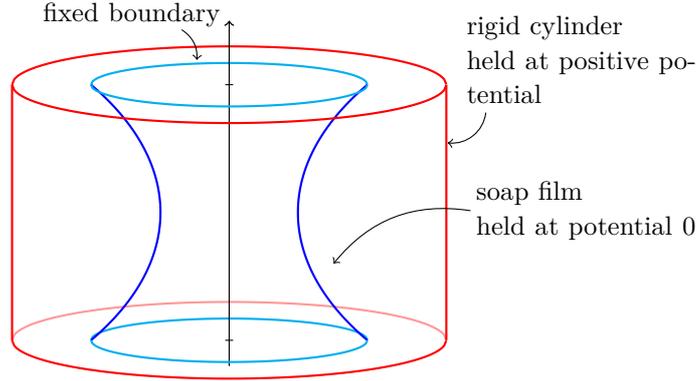

\subsection{The Mathematical Model}

We work in cylindrical coordinates and assume that the set-up is rotationally symmetric in which case the dimensionless problem can be described by two unknowns. The first one $u=u(t,z): [0,T) \times (-1,1) \rightarrow (-1,1)$ with $u(t,\pm1)=0$ and $T > 0$, models the soap film as a surface of revolution with profile $u+1$ for each fixed time. The second unknown is the electrostatic potential $\psi=\psi(z,r): \overline{\Omega(u)} \rightarrow \mathbb{R}$ , where 
$$\Omega(u) = \big \lbrace (z,r) \in (-1,1) \times (0,2) \, \vert \, u(z)+1 < r < 2 \, \big \rbrace$$ 
denotes the a-priori unknown space between soap film and outer cylinder, see Figure \ref{Cross Section} for a cross section of the set-up. We assume that the time evolution of $\psi$ is quasistatic (corresponding to the fact that time will only occur as a parameter in the equation \eqref{psigleichung42} for $\psi$ below) for which reason we always suppress the $t$-dependency of $\psi$ and $\Omega(u)$. Under the assumption that the problem is entirely driven by surface tension and electrostatics, we describe its dynamics by the following free boundary problem: The time-dependent film deflection $u$ solves the parabolic equation
 \begin{align}
 \begin{cases}
    \partial_t u -\sigma\, \partial_z \mathrm{arctan}  ( \sigma\partial_z u )&=  -\displaystyle\frac{1}{u+1} +\lambda\,(1+\sigma^2(\partial_z u)^2)^{3/2} \vert \partial_r \psi(z,u+1) \vert^2\,, \\
  \qquad \qquad \qquad u(t, \pm 1 )&=0 \,, \qquad -1 < u <1\,, \\
  \qquad \qquad \qquad u(0,z)&= u_0 \,, \qquad z \in (-1,1)\,,
 \end{cases} \label{filmdimensionless2}
\end{align}
with initial value $u_0$ satisfying $-1<u_0 <1$. Moreover, the electrostatic potential $\psi$ is given by 
\begin{align}
\begin{cases}\displaystyle
\frac{1}{r} \partial_r \left ( r \partial_r \psi \right ) + \sigma^2 \partial^2_z \psi&= 0  \quad \ \text{in} \quad \Omega(u)\,,  \\
 \ \, \qquad \qquad \qquad \qquad \psi &= h_u \quad \text{on} \quad \partial \Omega(u) \,,
\end{cases} \label{psigleichung42}
\end{align}
with suppressed $t$-dependency and a function $h_u$ which is constant $0$ on the film and $1$ on the outer cylinder. Neglecting the fringing field, as it is also done in free boundary problems for MEMS \cite{ELW14, ELW15, LW17}, we prescribe 
\begin{align}
h_u(z,r) = \frac{\ln \Big (\displaystyle\frac{r}{u(z)+1} \Big )}{\ln \Big (\displaystyle\frac{2}{u(z)+1} \Big )} \label{RBPsi42}
\end{align}
in the following. To come up with this boundary condition, we imagine that the film forms a cylinder and that the set-up is extended at the top and bottom to infinity. In this case, the electrostatic potential is explicitly known and gives \eqref{RBPsi42} if restricted back to the original set-up.
Finally, the important parameters of the model are $\sigma$ which gives the ratio of radii of the rings divided by their distance and $\lambda \in [0,\infty)$ which measures the strength of the applied voltage. If $\lambda=0$, no voltage is applied, and the soap film evolves according to mean curvature flow. For details on the (formal) derivation of \eqref{filmdimensionless2}-\eqref{RBPsi42} based on an energy consideration we refer to \cite{LSS24} and the references therein.

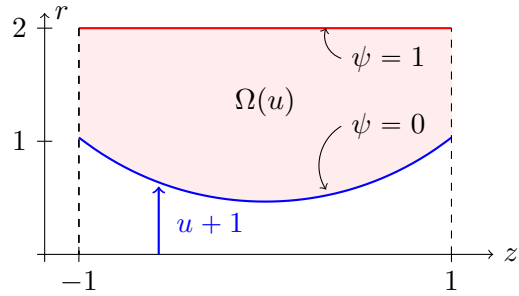
\begin{figure}[h]
\begin{tikzpicture}
\fill[white,rounded corners] (-3.5,-0.8) rectangle (3.3,3.5);
\fill[red!7] (-2.45,1.54) rectangle (2.45,3);
\draw[domain=-2.45:2.45, smooth, variable=\z, red!7, fill] plot ({\z},{cosh(0.5*\z)-0.3});
\draw[domain=-2.45:2.45, smooth, variable=\z, blue, thick] plot ({\z},{cosh(0.5*\z)-0.3});
\draw[red, thick] (-2.45,3) -- (2.45,3) ;
\draw[dashed](2.45,0) -- (2.45,3) ;
\draw[dashed] (-2.45,3) -- (-2.45,0) ;
\draw[dashed] (-2.45,3) -- (-2.45,0) ;
\draw[dashed] (-2.45,3) -- (-2.45,0) ;
\draw[->] (-3,0) -- (3,0) node[right] {$z$};
\draw (2.45,0.1) -- (2.45,-0.1) node[below] {$1$};
\draw (-2.45,0.1) -- (-2.45,-0.1) node[below] {$-1$};
\draw[->] (-2.9,-0.1) -- (-2.9,3.2) node[right] {$r$};
\draw (-2.8,1.5) -- (-3,1.5) node[left] {$1$};
\draw (-2.8,3) -- (-3,3) node[left] {$2$};
\node[above] at (0,1.7) {$\Omega(u)$};

\draw[->, blue, thick] (-1.4,0) -- (-1.4,0.9);
\node[blue, right] at (-1.3,0.4) {$u+1$};
\draw [<-, bend angle=45, bend right]  (0.8,3) to (1,2.6) node[right]{$\psi=1$};
\draw [<-, bend angle=45, bend left]  (0.8,0.86) to (1,1.7) node[right]{$\psi=0$};

\end{tikzpicture}
\caption[Cross Section Soap Film Bridge in an Electric Field]{Cross section of the soap film bridge in an electric field. Compared to Figure \ref{setup}, this picture is rotated by 90 degrees.}\label{Cross Section}
\end{figure}\color{black}

Our model shares similarities with the free boundary problem for MEMS in \cite{ELW15} from which we will also adapt some proofs to our setting. The main difference is that our source term \eqref{filmdimensionless2} is more complicated to control as it has no fixed sign and becomes singular in two cases $u= \pm 1$ instead of one. The case $u=-1$ corresponds to self-touching (more precisely: pinch-off) of the film and the case $u=1$ means that the film touches the outer cylinder. Moreover, we mention that the model from \cite{Moulton08,MP09} which consists of a singular ordinary differential equation may be derived from a stationary and not yet dimensionless version of \eqref{filmdimensionless2}-\eqref{RBPsi42} by assuming a small aspect ratio of the set-up. This means that the radii difference between the rings and the cylinder divided by the distance of the rings is set equal to zero.

\subsection{Main Results and Strategy}\label{Main Results}

We describe our results on existence and non-existence of local and global solutions. The first result reads as follows: 
\vspace{2mm}
\begin{thm}\label{localexistence}{\bf(Local Well-Posedness)}\\ Let $q \in (2,\infty)$, $\lambda \geq0$, $\sigma >0$ and $u_0 \in W^2_{q,D}(-1,1)$ with $1 > u_0(z) > -1$ for $z \in (-1,1)$. Then, there exists a unique maximal solution $(u ,\psi_u)$ to the coupled free boundary problem \eqref{filmdimensionless2}-\eqref{RBPsi42}
on the maximal interval of existence $\big [0, T_{max}\big )$ where $T_{max}=T_{max}(u_0)$ in the sense that 
\begin{align*}
 u \in C^1 \big ([ 0 ,T_{max}), L_q(-1,1) \big ) \cap C\big ( [0, T_{max}) , W^2_{q,D}(-1,1) \big ) 
\end{align*}
solves \eqref{filmdimensionless2} and $\psi_{u(t)} \in W^2_2 \big (\Omega(u(t))\big)$ solves \eqref{psigleichung42}-\eqref{RBPsi42} for each $t \in [0,T_{max})$.\\
\end{thm}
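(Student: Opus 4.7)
The plan is to eliminate the electrostatic potential by expressing $\psi$ in terms of $u$, and then to treat \eqref{filmdimensionless2} as a single quasilinear parabolic equation with a non-local source term to which the abstract local existence theory for quasilinear parabolic evolution equations (Amann's theorem, or equivalently a maximal $L_q$-regularity argument) applies. For each admissible profile $u \in W^2_{q,D}(-1,1)$ with $-1 < u < 1$ I would first straighten the free boundary via the diffeomorphism $T_u(z,\eta) = (z, (1-\eta)(u(z)+1) + 2\eta)$ mapping the fixed rectangle $\Omega_0 := (-1,1) \times (0,1)$ onto $\Omega(u)$, pull \eqref{psigleichung42}--\eqref{RBPsi42} back to $\Omega_0$, and apply standard elliptic theory to the resulting uniformly elliptic equation in divergence form on $\Omega_0$. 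This yields a unique $\psi_u \in W^2_2(\Omega(u))$ whose dependence on $u$ is smooth as long as $u$ stays uniformly away from $\pm 1$.

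Next I would show that the non-local source term
\begin{equation*}
g(u)(z) := \bigl(1+\sigma^2(\partial_z u)^2\bigr)^{3/2}\,\bigl\lvert\partial_r\psi_u(z,u(z)+1)\bigr\rvert^2
\end{equation*}
defines a locally Lipschitz continuous map from the open set $\mathcal{O} := \{u \in W^2_{q,D}(-1,1) : -1 < u < 1 \text{ on } (-1,1)\}$ into $L_q(-1,1)$. Linearising the pulled-back elliptic problem in $u$ and invoking elliptic regularity gives continuous dependence of $\psi_u$ on $u$ in $W^2_2(\Omega_0)$; the trace theorem on the straightened boundary $\{\eta = 0\}$ then delivers $\partial_r \psi_u|_{r = u+1}$ in a Sobolev space strong enough to survive squaring and multiplication by the $C^0$-function $(1+\sigma^2(\partial_z u)^2)^{3/2}$, with the embedding $W^2_{q,D}(-1,1) \hookrightarrow C^1([-1,1])$ for $q > 2$ ensuring the required regularity throughout. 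This step most closely follows \cite{ELW15}, adapted to the present cylindrical geometry and two-sided boundary data.

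With $g$ in hand, the film equation takes the quasilinear form
\begin{equation*}
\partial_t u + \mathcal{A}(u)u = -\frac{1}{u+1} + \lambda\,g(u),\qquad u(t,\pm 1) = 0,\qquad u(0) = u_0,
\end{equation*}
where $\mathcal{A}(v)w := -\sigma^2 \partial_z^2 w / (1 + \sigma^2(\partial_z v)^2)$, using $\sigma \partial_z \arctan(\sigma \partial_z u) = \sigma^2 \partial_z^2 u / (1 + \sigma^2(\partial_z u)^2)$. For each $v \in \mathcal{O}$ the operator $-\mathcal{A}(v)$ generates an analytic semigroup on $L_q(-1,1)$ with domain $W^2_{q,D}(-1,1)$, and $v \mapsto \mathcal{A}(v)$ is locally Lipschitz into $\mathcal{L}(W^2_{q,D}, L_q)$; together with the local Lipschitz bound on the right-hand side established above, and the obvious smoothness of $v \mapsto -1/(v+1)$ on $\mathcal{O}$, the abstract local existence theorem produces a unique maximal solution with the asserted regularity, from which $\psi_u$ is recovered via the first step.

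The principal obstacle is the second step: establishing the local Lipschitz estimate for $u \mapsto g(u) \in L_q(-1,1)$. Three features make this delicate---the $u$-dependence of the moving domain $\Omega(u)$, the need to take a trace of $\partial_r \psi_u$ on the moving boundary $\{r = u+1\}$ in a norm strong enough to survive both squaring and composition with $u$, and quantitative control of the elliptic estimates as $u$ approaches either of the singular values $\pm 1$, which must be expressed in terms of the $W^2_{q,D}$-norm of $u$ alone in order to feed into the abstract parabolic machinery.
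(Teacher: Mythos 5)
Your high-level strategy coincides with the paper's: transform $\Omega(u)$ to a fixed reference rectangle, establish a Lipschitz estimate for the non-local source $g$, and then treat the remaining scalar quasilinear parabolic equation. But two steps you treat as routine are precisely where the paper has to do genuine work, and your plan as written breaks down there.

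First, \emph{``standard elliptic theory'' does not give $W^2_2$-regularity for the pulled-back problem on your reference rectangle} $\Omega_0=(-1,1)\times(0,1)$: that domain has corners, and $W^2_2$-regularity up to the boundary is normally available only for $C^{1,1}$- (or $C^2$-) domains. The paper has to invoke elliptic regularity on general bounded \emph{convex} domains (Ladyzhenskaya--Ural'tseva), and because it also needs the constant in the estimate to be controlled by the $W^1_q$-norm, rather than the $W^1_\infty$-norm, of the transformed coefficients (which are only $W^1_q$ because $v\in W^2_{q,D}$), it reproves this result from scratch in Appendix~\ref{ERCD}. Without such a replacement the claim $\psi_u\in W^2_2(\Omega(u))$, and hence the trace you take of $\partial_r\psi_u$, is unsupported. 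Second, \emph{you propose to invoke the abstract quasilinear existence theorem as a black box}, but the paper explicitly remarks (after Theorem~\ref{localexistence}) that the regularity it can establish for $[u\mapsto g(u)]$ --- Lipschitz continuity with respect to the weaker $W^{2-\xi}_{q,D}$-norm, uniformly only on sets $S(\kappa)$ defined by $W^2_{q,D}$-bounds --- is of a slightly different shape than what Amann's Theorem~12.1 or a maximal $L_q$-regularity theorem requires, again because of the corner geometry. This is exactly why the paper carries out the Banach fixed point argument by hand, using the uniform parabolic evolution operator estimates of Proposition~\ref{SemigroupEstimates}. Citing the off-the-shelf theorem, as you propose, would leave the argument incomplete at the point the authors themselves single out as the crux.
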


Here, functions in $W^2_{q,D}(-1,1)$ satisfy a Dirichlet boundary condition and we use the notation $\psi_u$ instead of $\psi$, which reason is explained below. To prove Theorem \ref{localexistence} we adapt \cite{ELW15} with smaller changes according to \cite{LW14b}. Our proof is based on semigroup theory (more precisely: its time-dependent counterpart \cite{AmannLQPP}) and Banach's fixed point theorem, and strongly relies on the quasilinear theory \cite{Amann93, AmannLQPP}. To apply this theory, we want to recast the free boundary problem \eqref{filmdimensionless2}-\eqref{RBPsi42} as a single parabolic equation for the film deflection $u$ only,
 \begin{align*}
 \begin{cases}
    \partial_t u -\sigma\, \partial_z \mathrm{arctan}  ( \sigma\partial_z u )&=  -\displaystyle\frac{1}{u+1} +\lambda\,g(u)\,, \\
  \qquad \qquad \qquad u(t, \pm 1 )&=0 \,, \qquad -1 < u <1\,, \\
  \qquad \qquad \qquad u(0,z)&= u_0 \,, \qquad z \in (-1,1)\,,
 \end{cases} 
\end{align*}
with $-1<u_0 <1$ and electrostatic force 
\begin{align}
  g(u):= (1+\sigma^2(\partial_z u)^2)^{3/2}\, \big \vert \partial_r \psi_u (z,u+1) \big \vert^2\,. \label{filmdimensionless0} 
\end{align}
This single parabolic equation is exactly equivalent to equation \eqref{filmdimensionless2}. The difference is now that we view the electrostatic force as a non-local map $\left [ u \mapsto g(u)\right ]$ between suitable function spaces. More precisely, for fixed time, this map should first take the function $u$ to the electrostatic potential $\psi_u$, which solves the elliptic equation \eqref{psigleichung42}-\eqref{RBPsi42} on the $u$-dependent domain $\Omega(u)$, and then do further manipulations with $\psi_u$ resulting in \eqref{filmdimensionless0}.  Note that the electrostatic potential occurs no longer as an equal unknown which justifies the notation $\psi_u$. Of course, it is not clear on which function spaces the map $[u \mapsto g(u)]$ is meaningful, and indeed our main work to prove Theorem \ref{localexistence} is to establish Lipschitz continuity of $g$ between fractional Sobolev spaces. To this end, we rely on elliptic regularity for non-smooth convex domains from \cite{LU68} for which we include a detailed proof in the Appendix \ref{ERCD}. With Lipschitz continuity of $g$ at hand, Theorem \ref{localexistence} follows from a refinement of a fixed point argument based on \cite{Amann93,AmannLQPP}.

\begin{remn}
We point out that the regularity of $[u \mapsto g(u)]$, which we provide in Section \ref{ES}, will be slightly different to the one usually required for local well-posedness results in the quasilinear theory, see for example \cite[Theorem 12.1]{Amann93} or also \cite[Theorem 1.1]{MW20}. This is due to the corners of $\Omega(u)$ and it is the reason for performing the whole fixed point argument by hand.
\end{remn}

To state our second result precisely, we require the set 
\begin{align}
S(\kappa):= \Big \lbrace v \in W^2_{q,D}(-1,1) \, \Big \vert \, \Vert v \Vert_{W^2_q(-1,1)} \leq 1/\kappa\,, \ -1+\kappa \leq v(z) \leq 1-\kappa \Big \rbrace \,, \label{Skappa}
\end{align}
for $\kappa >0$ and fixed $q >2$, which consists of film deflections which neither pinch-off nor touch the outer metal cylinder.\\

\begin{cor}\label{globex}{\bf (Global Existence Criterion)}\\
If for each $\tau > 0$, there exists $\kappa(\tau) \in (0,1)$ such that the unique maximal solution $u$ from Theorem \ref{localexistence} satisfies $u(t) \in S(\kappa(\tau))$ for all $t \in [0,T_{max}) \cap [0,\tau]$, then $u$ exists globally, that is $T_{max}=\infty$.\\
\end{cor}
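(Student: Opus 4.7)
The plan is to argue by contradiction. Suppose $T_{max} < \infty$ and apply the hypothesis with $\tau = T_{max}$ to obtain $\kappa := \kappa(T_{max}) \in (0,1)$ such that $u(t) \in S(\kappa)$ for all $t \in [0, T_{max})$. The goal is to extend $u$ strictly past $T_{max}$ and thereby contradict maximality.

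The heart of the argument is to extract from the local well-posedness proof a \emph{uniform} time of existence on $S(\kappa)$: there exists $\delta = \delta(\kappa) > 0$ such that, for every initial value $w_0 \in S(\kappa)$, the maximal solution of \eqref{filmdimensionless2}-\eqref{RBPsi42} with initial datum $w_0$ exists at least on $[0, \delta]$. This is the reason why Theorem \ref{localexistence} is established "by hand": inspecting the fixed point argument behind it, one sees that the admissible existence time is determined by (i) the operator bounds for the linearization of the quasilinear principal part $-\sigma\partial_z \mathrm{arctan}(\sigma\partial_z \cdot)$, (ii) the size of the singular source $-1/(w_0+1)$ near $w_0 = -1$, and (iii) the Lipschitz constant of the non-local electrostatic map $[u \mapsto g(u)]$ provided by Section \ref{ES}. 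The norm control $\|w_0\|_{W^2_q} \leq 1/\kappa$ gives (i), while the two-sided separation $-1+\kappa \leq w_0 \leq 1-\kappa$ keeps both singularities $u = -1$ and $u = 1$ away and hence controls (ii) and (iii) uniformly in $w_0 \in S(\kappa)$.

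Once such a $\delta$ is available, the contradiction follows by a standard concatenation. Pick any $t_0 \in (T_{max} - \delta, T_{max})$; then $u(t_0) \in S(\kappa)$, so there is a solution $\tilde{u}$ on $[0, \delta]$ with $\tilde{u}(0) = u(t_0)$. By the uniqueness part of Theorem \ref{localexistence}, $\tilde{u}(\cdot)$ agrees with $u(t_0 + \cdot)$ on their common interval of existence, which allows us to glue them into a solution on $[0, t_0 + \delta]$. Since $t_0 + \delta > T_{max}$, this contradicts the maximality of $T_{max}$, forcing $T_{max} = \infty$.

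The only real obstacle is establishing the uniform existence time $\delta(\kappa)$. All other steps are bookkeeping, but verifying that every constant appearing in the fixed point scheme of Theorem \ref{localexistence} — the radius of the contraction ball, the resolvent bounds of the principal part, the Lipschitz constants of $[u \mapsto g(u)]$ and $[u \mapsto -1/(u+1)]$ on the relevant fractional Sobolev balls — is controlled purely in terms of $\kappa$ requires re-reading that proof with this quantitative dependence in mind.
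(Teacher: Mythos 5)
Your proposal is correct and follows exactly the route the paper takes: the paper's one-line proof observes that the existence time $\tau$ in the fixed point scheme of Theorem \ref{localexistence} depends only on $\kappa$ (through $\tilde\kappa=\kappa/C_{11}(\kappa)$), giving precisely the uniform lower bound $\delta(\kappa)$ you identify, after which the concatenation/contradiction step is standard. You have simply written out in detail what the paper compresses into a single sentence.
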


Corollary \ref{globex} implies that if $T_{max} < \infty$, then the soap film pinches-off, touches the outer metal cylinder or the $\Vert \cdot \Vert_{W^2_{q,D}(-1,1)}$-norm of $u$ blows up. While the first two cases possess a direct physical interpretation, the norm blow-up corresponds more likely to a limitation of the model.

Finally, as our third contribution, we study the dynamical behaviour of the film deflection $u$ for large applied voltages $\lambda$. In this case, a dominance of the electrostatic force is expected resulting in non-existence of global solutions. Indeed, we can prove this non-existence under the additional assumption that $\sigma$ is not too small which a-priori excludes the possibility of pinch-off for many initial film deflections as we shall see below.
More precisely, we need to know that the stationary version of \eqref{filmdimensionless2} with $\lambda =0$,
\begin{align}
\begin{cases}
  &-\sigma \partial_z \mathrm{arctan} (\sigma \partial_z u) = - \displaystyle\frac{1}{u+1}\,,\\
  &u(\pm1)=0 \,, \quad -1 <u < 1\,,
\end{cases} \label{statlambda10}
\end{align}
 possesses at least one solution. Since \eqref{statlambda10} is the well-known minimal surface equation for a surface of revolution, it can be found in many textbooks, see for example \cite[p.\,282]{JLJ98}, that there exists a threshold value $\sigma_{crit} \approx 1.2$ such that \eqref{statlambda10} has at least one solution for $\sigma \geq \sigma_{crit}$ and no solution for $\sigma < \sigma_{crit}$.

Moreover, all stationary solutions have the shape of a (translated) catenoid, i.e. 
\begin{align}
u_{cat}(z) := \frac{\mathrm{cosh}(cz)}{\mathrm{cosh}(c)\ } - 1 \,, \qquad z \in (-1,1)\,, \label{catenoid}
\end{align} 
for $c>0$ with $\sigma = \frac{\mathrm{cosh}(c)}{c}$, and we may always assume, for fixed $\sigma \geq \sigma_{crit}$, that we have picked the smallest of such catenoids. \\

\begin{thm}\label{MainResultGlobalNonexistence}{\bf (Non-Existence of Global Solutions)}\\
Let $\sigma \geq \sigma_{crit}$. There exists $\lambda_{crit}(\sigma)>0$ such that for each $\lambda > \lambda_{crit}$ and each initial condition $u_0 \geq u_{cat}$, the corresponding solution $(u,\psi_u)$ to
\eqref{filmdimensionless2}-\eqref{RBPsi42} has a finite maximal time of existence $T_{max}(u_0) < \infty$. \\
\end{thm}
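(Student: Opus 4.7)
The strategy is a Kaplan-type eigenfunction argument combined with a comparison principle. Let $\varphi_1(z) := \cos(\pi z/2)$ denote the positive first Dirichlet eigenfunction of $-\partial_z^2$ on $(-1,1)$ and set
\[
E(t) := \int_{-1}^{1} u(t,z)\, \varphi_1(z)\, dz, \qquad t \in [0, T_{max}(u_0)).
\]
Since $u(t,\cdot) < 1$, one has the universal upper bound $E(t) \le \int_{-1}^{1} \varphi_1 \, dz = 4/\pi$. The goal is to show that for $\lambda$ large enough, $E$ satisfies a differential inequality $E'(t) \ge \gamma_\sigma (\lambda - \lambda_{crit}) > 0$, so that $E$ would eventually exceed its a priori upper bound if $T_{max}$ were infinite.

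First, I would establish the pointwise comparison $u(t,z) \ge u_{cat}(z)$ on $[0,T_{max}) \times (-1,1)$. Since $u_{cat}$ solves \eqref{statlambda10} and $\lambda g(u_{cat}) \ge 0$, it is a stationary subsolution of \eqref{filmdimensionless2}. Writing $w := u - u_{cat}$ and applying the mean value theorem to both $\arctan(\sigma\, \cdot)$ and $v \mapsto 1/(v+1)$, the difference satisfies a linear parabolic inequality
\[
\partial_t w - \sigma^2 \partial_z\!\left[\frac{\partial_z w}{1 + \xi(t,z)^2}\right] - \frac{w}{(u+1)(u_{cat}+1)} \ \ge\ \lambda\, g(u) \ \ge\ 0
\]
with $w(t,\pm 1) = 0$ and $w(0,\cdot) \ge 0$. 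The weak maximum principle (after the gauge transform $w \mapsto w\, e^{-\mu t}$ to absorb the positive zero-order coefficient) then gives $w \ge 0$. In particular, $u(t,z) + 1 \ge u_{cat}(z) + 1 \ge 1/\cosh(c) =: \delta_\sigma > 0$ throughout $[0,T_{max})$, so pinch-off is precluded.

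The crux is a uniform positive lower bound
\[
\int_{-1}^{1} g(u(t,\cdot))\, \varphi_1\, dz \ \ge\ \gamma_\sigma\ >\ 0
\]
valid whenever $u_{cat} \le u < 1$ in $W^2_{q,D}(-1,1)$, with $\gamma_\sigma$ depending only on $\sigma$. Since $g(u) \ge |\partial_r \psi_u(z, u+1)|^2$, this amounts to an integrated Hopf-type bound for the normal flux of $\psi_u$ on the film. I would derive it by combining a lower bound on the anisotropic Dirichlet energy $\int_{\Omega(u)} \bigl[(\partial_r \psi_u)^2 + \sigma^2 (\partial_z \psi_u)^2\bigr]\, r\, dr\, dz$ (via a capacity / Poincar\'e-type estimate exploiting that $\psi_u$ jumps from $0$ to $1$ across a strip of width $\le 1$) with the flux identity obtained by testing \eqref{psigleichung42} against $\psi_u$ and integrating by parts, which converts the interior energy into a weighted boundary integral involving $\partial_r \psi_u |_{r = u+1}$. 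This step is the main technical obstacle, since $\Omega(u)$ is $u$-dependent, non-smooth at the corners, and may degenerate when $u$ approaches $1$; here one uses that the factor $(1+\sigma^2 u_z^2)^{3/2}$ in $g$ helps rather than hurts.

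With both ingredients in hand, multiplying \eqref{filmdimensionless2} by $\varphi_1$ and integrating by parts (the boundary terms vanish because $\varphi_1(\pm 1) = 0$) yields
\[
E'(t) \ =\ -\sigma \int_{-1}^{1} \arctan(\sigma \partial_z u)\, \partial_z \varphi_1\, dz \ -\ \int_{-1}^{1} \frac{\varphi_1}{u+1}\, dz \ +\ \lambda \int_{-1}^{1} g(u)\, \varphi_1\, dz.
\]
The first term is bounded in absolute value by $\tfrac{\sigma \pi}{2} \int |\partial_z \varphi_1|\, dz = \sigma \pi$; the second by $K_\sigma := 4/(\pi \delta_\sigma)$ via Step~1; the third by $\lambda\, \gamma_\sigma$ from Step~2. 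Hence $E'(t) \ge \lambda \gamma_\sigma - (\sigma \pi + K_\sigma)$ on $[0,T_{max})$. Setting $\lambda_{crit} := (\sigma \pi + K_\sigma)/\gamma_\sigma$, for any $\lambda > \lambda_{crit}$ one obtains $E'(t) \ge \gamma_\sigma (\lambda - \lambda_{crit}) > 0$, and integration against the upper bound $E(t) \le 4/\pi$ forces
\[
T_{max}(u_0)\ \le\ \frac{4/\pi - E(0)}{\gamma_\sigma (\lambda - \lambda_{crit})}\ <\ \infty,
\]
as claimed.
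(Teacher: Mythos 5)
Your high-level architecture is recognizable: comparison principle (your Step 1 is essentially the paper's Proposition on the comparison with $u_{cat}$), a scalar Lyapunov-type functional that becomes monotone for large $\lambda$, and a lower bound on the electrostatic-force contribution. But the crucial step is not carried out, and the bound you assert is not one the paper proves — and I don't think it can be proven in the uniform form you need.

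You claim a constant $\gamma_\sigma > 0$, depending only on $\sigma$, with
$\int_{-1}^1 g(u(t,\cdot))\,\varphi_1\,dz \ge \gamma_\sigma$
uniformly over all admissible $u$ with $u_{cat}\le u<1$. The paper's argument (Proposition \ref{lowerEstimateRHS}) does \emph{not} establish such a uniform pointwise-in-$u$ bound. What it establishes, via Gauss's theorem applied to the divergence-form equation \eqref{cylindiv} and a maximum-principle comparison of $\partial_\nu\psi_u$ with $\partial_\nu\psi_{cat}$ on the three fixed boundary pieces of $\Omega(u)$, is a bound on the \emph{first} power of the flux:
$\int_{-1}^1 (u+1)\bigl(1+(\sigma\partial_z u)^2\bigr)\,\partial_r\psi_u(z,u+1)\,dz \ge C_{15}(\sigma) > 0$.
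Converting this to a bound on the \emph{square} $|\partial_r\psi_u|^2$ in $g(u)$ necessarily introduces, via Young's inequality, a compensating negative term proportional to $\int_{-1}^1\sqrt{1+(\sigma\partial_z u)^2}\,dz$, which is \emph{not} uniformly bounded in $u$. The resulting inequality is
$\int g(u)\,dz \ge \varepsilon\, C_{15}(\sigma) - \varepsilon^2\int\sqrt{1+(\sigma\partial_z u)^2}\,dz$,
and the second term must be absorbed elsewhere. The paper absorbs it using the mean-curvature term: a careful integration by parts produces a contribution $-\tfrac14\int\arctan(\sigma\partial_z u)\,\sigma\partial_z u\,dz$, which by Lemma \ref{Arctan} is itself bounded above by $-\tfrac{\pi}{16}\int\sqrt{1+(\sigma\partial_z u)^2}\,dz + \tfrac{\pi}{4}$, so the two $\int\sqrt{1+(\sigma\partial_z u)^2}$ contributions cancel after optimizing $\varepsilon$. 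This cancellation is the heart of the proof and it hinges on the specific choice of functional $\mathcal{E}(t)=-\int\ln(u+1)\,dz$: differentiating produces the weight $1/(u+1)$ which, upon substituting the PDE, makes the mean-curvature term integrate by parts into exactly the right form. Your Kaplan functional $E(t)=\int u\,\varphi_1\,dz$ does not generate this weight, so even if you had a flux bound analogous to the paper's, you would not be able to cancel the $\int\sqrt{1+(\sigma\partial_z u)^2}$ term. The sketched capacity/Poincar\'e derivation of a uniform $\gamma_\sigma$ is the step you yourself flag as "the main technical obstacle"; it is in fact the step that fails, since such a uniform lower bound on $\int g(u)\varphi_1$ is not what the geometry provides, and the gradient factor $(1+\sigma^2u_z^2)^{3/2}$ is not enough to make it hold uniformly in $u$.

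In short: the comparison step and the monotonicity-of-a-scalar-quantity strategy are sound, but the lower bound on the electrostatic force is asserted in a form that is both unproven and too strong; the correct bound carries a $u$-dependent deficit that must be offset by the arctan/mean-curvature contribution, which your choice of functional does not produce.
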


To outline the proof of Theorem \ref{MainResultGlobalNonexistence}, we fix $\sigma \geq \sigma_{crit}$ and a solution $(u,\psi_u)$ to \eqref{filmdimensionless2}-\eqref{RBPsi42} with initial value $u_0 \geq u_{cat}$. Then, the parabolic comparison principle guarantees that $u_0 \geq u_{cat}$ implies $u(t) \geq u_{cat}$, so that pinch-off of the soap film bridge is excluded. This makes it possible to consider the functional
 \begin{align*}
\mathcal{E}(t) := -\int_{-1}^1 \ln \big (u(t,z)+1 \big ) \, \mathrm{d} z\,, \qquad t \in [0,T_{max})\,, \qquad T_{max}=T_{max}(u_0)\,,
\end{align*}
which is bounded from below  
\begin{align*}
\mathcal{E}(t) \geq -2 \ln (2)\,, \qquad t \in [0,T_{max}) \,,
\end{align*}
while we aim at showing
\begin{align}
 \frac{\mathrm{d}}{\mathrm{d} t} \mathcal{E}(t) \leq - C < 0 \,, \qquad t \in [0,T_{max})\,, \label{LjapunovEst}
\end{align}
for $\lambda$ above a critical threshold value. Obviously, this is only possible if $T_{max} < \infty$. Thus, the main work is to establish \eqref{LjapunovEst} for which we will rely on several auxiliary estimates.\\
\color{black}

The question of non-existence of global solutions
for large $\lambda$ in variants of MEMS models has been previously studied in \cite{ELW13,ELW14,LW18b,L16}. In particular, a related result is contained in \cite{ELW13}, in which non-existence of global solutions to a MEMS model for large $\lambda$ is shown by deriving a more involved inequality for the functional $\tilde{\mathcal{E}}(t):= \int_{-1}^1 u(t,z) \,\mathrm{d} z$. 
Recall that in our case the right-hand side of \eqref{filmdimensionless2} contains two terms of opposite signs, which is the reason for working with the different energy functional $\mathcal{E}$. The term $-1/(u+1)$ will be controlled by the restriction $u_0 \geq u_{cat}$, while the positivity of
the electrostatic force $+ \lambda g(u)$ is accounted for by using the logarithm in the definition of $\mathcal{E}$.

\subsection{Outline of the Paper}\label{Outline}
In Section \ref{Notations}, we collect some notations and preliminaries. Then, in Section \ref{ES}, we prepare the proof of the first two main results by establishing Lipschitz continuity of $[u \mapsto g(u)]$ between fractional Sobolev spaces. To this end, we transform the unknown domain $\Omega(u)$ to a fixed rectangle. In Section~\ref{21123}, having Lipschitz continuity of $[u \mapsto g(u)]$ at hand, we prove Theorem \ref{localexistence} and Corollary \ref{globex}. Here, we rely on the reinterpretation of \eqref{filmdimensionless2}-\eqref{RBPsi42} in terms of $u$, semigroup theory and Banach's fixed point theorem. Finally, in Section \ref{section: Non-Existence} we prove Theorem \ref{MainResultGlobalNonexistence} on non-existence of global solutions for large voltages. The main text is supplemented by Appendix \ref{ERCD} where we give a detailed and new proof of an elliptic regularity result on general convex domains from \cite{LU68}, which forms the basis for the investigation of the map $[u \mapsto g(u)]$ in Section~\ref{21123}. \\

\section{Notations and Preliminaries}\label{Notations}

Let $U \subset \mathbb{R}^n$ be open and bounded with a Lipschitz boundary. For $p \in (1,\infty)$ and $s\in (0,2]$ with $s \neq 1/p$, we set\\
\begin{align*}
 W^s_{p,D}(U):= \begin{cases} \ \ \ W^s_p(U) \quad \qquad \qquad \qquad \qquad \ \ \,\text{for} \quad s\in (0,1/p)\,, \\
  \ \ \big \lbrace f \in W^s_p(U) \, \big \vert \, f=0 \ \text{on} \ \partial U \, \big \rbrace \quad\text{for} \quad s\in (1/p,2]\,.
\end{cases} 
\end{align*}
Moreover, we let $W^{-s}_{p^\prime,D}(U)$ be the dual space of $W^s_{p,D}(U)$ where $p^\prime$ denotes the dual exponent of $p$. We also require the following multiplication theorem for fractional Sobolev spaces: \\

\begin{thm}\label{multthm}
 Let $U \subset \mathbb{R}^n$ be a bounded domain with Lipschitz boundary. Let $m \in \mathbb{N}$ with $m \geq 2$ and $p,p_j \in (1,\infty)$ as well as $s,s_j \in (0,\infty)$ for $1 \leq j \leq m$. If $s \leq \mathrm{min} \lbrace s_j \rbrace$ and 
 \begin{align*}
 s- \frac{n}{p} \, < \begin{cases}
 &\displaystyle\sum_{s_j < n/p_j} \left (  s_j - \frac{n}{p_j} \right ) \qquad if \quad \underset{1 \leq j \leq m}{\mathrm{min}} \left \lbrace s_j - \frac{n}{p_j} \right \rbrace < 0 \,, \\
 &\displaystyle\underset{1 \leq j \leq m}{\mathrm{min}} \left \lbrace s_j - \frac{n}{p_j} \right \rbrace \qquad otherwise\,,
 \end{cases}
 \end{align*}
 then pointwise multiplication 
 $$\prod_{j=1}^m W^{s_j}_{p_j}(U) \rightarrow W^s_p(U)$$
 is continuous.
 \end{thm}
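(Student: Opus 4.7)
The plan is to deduce the multiplication estimate on the bounded Lipschitz domain $U$ from the corresponding statement on $\mathbb{R}^n$ and to treat the latter by paraproduct techniques combined with induction on $m$. First I would invoke Rychkov's universal extension operator for Lipschitz domains, which yields a bounded extension $E\colon W^{s_j}_{p_j}(U) \to W^{s_j}_{p_j}(\mathbb{R}^n)$ simultaneously for all indices in the hypothesis. Pointwise multiplication on $U$ then coincides with the restriction of the product of the extensions, and since restriction $W^s_p(\mathbb{R}^n) \to W^s_p(U)$ is continuous, it suffices to prove
\begin{equation*}
\Big\| \prod_{j=1}^m u_j \Big\|_{W^s_p(\mathbb{R}^n)} \lesssim \prod_{j=1}^m \| u_j \|_{W^{s_j}_{p_j}(\mathbb{R}^n)},
\end{equation*}
where I identify $W^s_p(\mathbb{R}^n)$ with the Bessel potential / Triebel--Lizorkin space $F^s_{p,2}(\mathbb{R}^n)$, which is legitimate since $1<p<\infty$. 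An induction on $m$ reduces the task to the bilinear case: for $m \geq 3$, write $\prod_{j=1}^m u_j = \bigl(\prod_{j=1}^{m-1} u_j\bigr)\cdot u_m$ and choose an auxiliary pair $(\tilde s, \tilde p)$ for which $\prod_{j=1}^{m-1} W^{s_j}_{p_j} \hookrightarrow W^{\tilde s}_{\tilde p}$ by the inductive hypothesis and $W^{\tilde s}_{\tilde p} \cdot W^{s_m}_{p_m} \hookrightarrow W^s_p$ by the bilinear case. A direct inspection of the two branches of the hypothesis shows that $\tilde s$ can be taken slightly above $s$ with $\tilde s \leq \min_{j<m} s_j$, and $\tilde p$ adjusted so that a strict margin survives for the next iteration.

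The heart of the argument is therefore the bilinear case $m=2$. Here I would employ Bony's paraproduct decomposition
\begin{equation*}
u_1 u_2 = \pi_{u_1} u_2 + \pi_{u_2} u_1 + R(u_1,u_2)
\end{equation*}
relative to a dyadic Littlewood--Paley partition of unity. Each paraproduct is estimated via the classical bound $\pi\colon L^q \times F^s_{r,2} \to F^s_{p,2}$ with $1/p = 1/q + 1/r$, combined with the Sobolev embedding $W^{s_j}_{p_j} \hookrightarrow L^{q_j}$: when $\min_j(s_j - n/p_j) < 0$, the embedding for the offending factors lands in a proper Lebesgue space rather than in $L^\infty$, and the strict inequality in the hypothesis is precisely what makes the required Hölder identity $1/p = 1/q_1 + 1/p_2$ attainable while absorbing the deficit $s_j - n/p_j$ from each critical factor. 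The resonant piece $R$ is controlled by an analogous bound using only $s_1 + s_2 > s$, which follows automatically from $s \leq \min\{s_j\}$ and $s_j > 0$. Summation over the Littlewood--Paley pieces (in $\ell^2$ thanks to the Fefferman--Stein vector-valued maximal inequality, which is where $1<p<\infty$ enters) closes the estimate.

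I expect the main obstacle to be the borderline and nearly-tight cases: when some $s_j$ equals $n/p_j$, the Sobolev embedding supplies only $L^q$ for every finite $q$ (not $L^\infty$), so the bookkeeping of Hölder exponents becomes delicate and the strict inequality in the hypothesis is used nontrivially to pick a finite $q$ large enough; analogously, in the multilinear induction, one must verify at every step that $(\tilde s, \tilde p)$ can be chosen so that both inductive and bilinear hypotheses are satisfied with strict slack, which amounts to tracking that the excess in the scaling condition is not consumed in any single iteration. With these technicalities handled, the estimate on $\mathbb{R}^n$ transfers back to $U$ via the extension/restriction step, establishing continuity of the pointwise product map.
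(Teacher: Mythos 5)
The paper does not actually prove this statement: it simply cites Amann's multiplication theorem \cite[Theorem~4.1, Remark~4.2(d)]{Amann91} (and \cite[Theorem~7.1]{ELW15}). Your proposal is therefore not \enquote{the same route with different words} but an attempt to supply the underlying proof, and the broad strategy you sketch --- extension to $\mathbb{R}^n$, Bony paraproducts, Littlewood--Paley/Fefferman--Stein machinery, induction on $m$ --- is indeed essentially how such multiplication theorems are established in the literature, including by Amann.

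There is, however, a concrete gap in the way you set it up. You identify $W^s_p(\mathbb{R}^n)$ with the Triebel--Lizorkin space $F^s_{p,2}(\mathbb{R}^n)$ and claim this is \enquote{legitimate since $1<p<\infty$}. That identification holds only for \emph{integer} $s$ (Bessel potential spaces $H^s_p=F^s_{p,2}$). In the convention used in this paper (following Amann, Grisvard and the ELW series), $W^s_p$ with non-integer $s$ denotes the Sobolev--Slobodeckij space, which coincides with the Besov space $B^s_{p,p}=F^s_{p,p}$, and this equals $F^s_{p,2}$ only when $p=2$. Since the theorem is used throughout the paper precisely for non-integer orders $s,s_j$ and $p\neq 2$, this is not a cosmetic slip: the paraproduct and resonance estimates have different exponent ranges on the $B$-scale than on the $F_{\cdot,2}$-scale (the third index is not $2$, and embeddings between $B^s_{p,p}$ and $F^s_{p,2}$ cost regularity in one direction when $p\neq2$). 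The fix is to run the whole argument on the scale $B^s_{p,p}$ (or equivalently $F^s_{p,p}$), which is doable but changes the bookkeeping; as stated, the proof targets the wrong scale.

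Two further points you should tighten if you carry this out. First, the resonance term $R(u_1,u_2)$ does not come for free from $s_1+s_2>s$: the standard estimate for the high--high piece requires, after Sobolev embedding, a condition of the form $s_1+s_2>n\bigl(\tfrac1{p_1}+\tfrac1{p_2}-\tfrac1p\bigr)_+$ together with positivity of the regularity gained, and this is exactly where the two branches of the displayed hypothesis (and the strict inequality) are consumed; your one-line dismissal hides the part of the argument where the hypothesis is actually used. Second, in the induction on $m$ the existence of an admissible intermediate pair $(\tilde s,\tilde p)$ is genuinely nontrivial when several factors sit below their Sobolev critical index simultaneously; one has to verify that the deficits $s_j-n/p_j<0$ can be apportioned across steps without ever violating $\tilde s\le\min_{j<m}s_j$ or forcing $\tilde p\notin(1,\infty)$. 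Both issues are resolved in \cite{Amann91}, which is why the paper simply cites it rather than reproducing the argument.
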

 
 \begin{proof} 
  This follows from \cite[Theorem 4.1, Remark 4.2\,(d)]{Amann91}, see also \cite[Theorem 7.1]{ELW15}.
 \end{proof}
 
We say that an operator $B: W^2_{q,D}(-1,1) \rightarrow L_q(-1,1)$ belongs to
$$\mathcal{H}  \big(W^2_{q,D}(-1,1), L_q(-1,1) \big )$$
if $-B$ generates an analytic semigroup on $L_q(-1,1)$ with domain $W^2_{q,D}(-1,1)$. Moreover, we require a more quantitative characterisation of generators of analytic semigroups from \cite{AmannLQPP}: For $\omega > 0$ and $k \geq 1$, an operator $B \in  \mathcal{H}\big(W^2_{q,D}(-1,1), L_q(-1,1) \big )$ belongs to the class
\begin{align} \mathcal{H}\big(W^2_{q,D}(-1,1), L_q(-1,1), k, \omega \big ) \label{QSE}
\end{align}
if $\omega + B$ is an isomorphism from $W^2_{q,D}(-1,1)$ onto $L_q(-1,1)$ and if 
\begin{align*}
\frac{1}{k} \leq \frac{\big \Vert(\mu +B ) v \big \Vert_{L_q(-1,1)} }{\vert \mu \vert \Vert v \Vert_{L_q(-1,1)} + \Vert v \Vert_{W^2_{q,D}(-1,1)}} \leq k \,, \quad \mathrm{Re} \mu \geq \omega \,, \quad v \in W^2_{q,D}(-1,1) \setminus \lbrace 0 \rbrace \,.
\end{align*}
The classes \eqref{QSE} make it possible to derive uniform estimates on semigroups, and hence to treat quasilinear parabolic equations with a non-local right-hand side.\\

Finally, if $E$ and $F$ are Banach spaces, we denote by $\mathcal{L}(E,F)$ the Banach space of bounded linear operators from $E$ to $F$,  by $\mathcal{L}_{is}(E,F)$ the set of isomorphisms from $E$ to $F$, and by $\mathcal{L}(E)$ the bounded linear operators from $E$ to $E$.

\section{Elliptic Subproblem}\label{ES}

In this section, we prove Lipschitz continuity of the map $[v \mapsto g(v)]$, where $g(v)$ denotes the electrostatic force from \eqref{filmdimensionless0}, and $v\in W^2_{q,D}(-1,1)$ with $q>2$ and $-1 < v(z) <1$ is a time-independent film deflection. The key step in the analysis of $[v \mapsto g(v)]$ is the investigation of $[v \mapsto \psi_v]$ with $\psi_v$ being the solution to the elliptic subproblem \eqref{psigleichung42}-\eqref{RBPsi42}.

As in \cite{LW17}, we note that this elliptic equation has a unique weak solution $\psi_v \in W^1_2\big (\Omega(v) \big)$ by Lax-Milgram Theorem, but that this regularity is not sufficient to define the electrostatic force $g(v)$ as it contains the square of the trace of the derivative of $\psi_v$. In addition, the Lax-Milgram Theorem provides no information on the dependency of $\psi_v$ on $v$.
To make the dependency of $\psi_v$ on $v$ accessible, we transform the domain $\Omega(v)$, on whose closure $\psi_v$ is defined, to an $v$-independent reference domain. More precisely, for a given film deflection $v \in W^2_{q,D}(-1,1)$ with $-1 > v(z) >1$ and $q >2$, we transform the domain $\Omega(v)$ to the fixed rectangle
 $$\Omega = (-1,1) \times \big ( 1,2 )$$
 via $T_v : \overline{\Omega(v)} \rightarrow \overline{\Omega}$ defined by 
 \begin{align}
  T_v (z,r) := \left ( z, \frac{r-2v(z)}{1-v(z)} \right )  \,, \qquad (z,r)\in \overline{\Omega(v)}\,. \label{deftrafoT}
 \end{align}
 
Due to the chain rule as well as transformation results for Sobolev functions \cite[Lemma 2.3.2]{Necas12}, we get that the electrostatic potential $\psi_v$ solves \eqref{psigleichung42}-\eqref{RBPsi42} weakly or strongly on $\Omega(v)$ if and only if $\phi_v:= \psi_v \circ (T_v)^{-1}$ is a weak or strong solution to 
 \begin{align}
 \begin{cases}
  L_v \phi_v &=0 \qquad \quad \text{in} \quad \Omega\,,\\
	\ \ \ \,	\phi_v &= \displaystyle\frac{\mathrm{ln}(r)}{\mathrm{ln}(2)} \quad \text{on} \quad \partial \Omega\,,
  \end{cases}  \label{ellsub}
 \end{align}
 where the transformed $v$-dependent differential operator $L_v$ is given by
 \begin{align}
  L_v w &:= \sigma^2 (1-v) \partial_z^2 w - 2 \sigma^2 \, \partial_z v \,  ( 2-r  ) \partial_r\partial_z w \nonumber \\
  &\quad\ \ +\frac{1+\sigma^2 (\partial_z v)^2  ( 2-r  )^2}{1-v}\, \partial_r^2 w \nonumber \\
  &\quad\ \  + \left [ - \sigma^2  ( 2 - r  ) \Big (  \partial_z^2 v +  \frac{2 (\partial_z v)^2 }{1-v} \Big ) + \frac{1}{2v +(1-v)r } \right ] \partial_r w\,. \label{Lvnondivergence}
 \end{align}
 In divergence form this operator reads
 \begin{align}
L_v w = \mathrm{div} \left (A(v) \nabla w \right ) + d(v)\cdot \nabla w   \label{Lvdivergence}
 \end{align}
 with
 \begin{align*}
  A(v) = [a_{ij}(v)]_{i,j=1}^2 &:= \begin{pmatrix}
          \sigma^2 (1-v)  & - \sigma^2  \,\partial_z v\, (2-r ) \\
         - \sigma^2 \,\partial_z v \, (2-r )  & \displaystyle\frac{1+\sigma^2 (\partial_z v)^2(2-r)^2}{1-v}
         \end{pmatrix}
\,, \\
d(v) = \begin{pmatrix} d_1(v) \\ d_2(v) \end{pmatrix}&:=\begin{pmatrix}  0\\ \displaystyle\frac{1}{2v+(1-v)r}\end{pmatrix} \,.
 \end{align*}
 
To prevent the operator $L_v$ from being degenerate or singular, we will study the dependency of \eqref{ellsub} on $v$ only on the sets $S(\kappa)$ defined in \eqref{Skappa}.

\subsection{Solution Theory}\label{21123b}
  
The aim of this subsection is threefold: We present the weak and strong solution theory for the problem
 \begin{align}
  \begin{cases}
   L_v \Phi_v &= F \quad \text{in} \quad \Omega\,, \\
   \ \ \ \Phi_v &= 0 \quad \text{on} \quad \partial \Omega
  \end{cases} \label{selliptic:eq1}
 \end{align}
with $F$ in $W^{-1}_{2,D}(\Omega)$ or $L_2(\Omega)$ respectively, which is closely related to the transformed problem \eqref{ellsub},
we derive a-priori estimates for $\Phi_v$ holding uniformly on $S(\kappa)$, and we use interpolation theory to improve these a-priori estimates. The applied methods are similar to those leading to \cite[Lemma 2.2]{ELW15}.\\

Before we start, let us note that the transformed operator $-L_v$ is again uniformly elliptic with ellipticity constant independent of $v \in S(\kappa)$:

\begin{lem}\label{uniformelliptic} There exists a constant $\alpha=\alpha(\kappa)>0$ such that
\begin{align*}
 \alpha \vert \xi \vert^2 \leq \xi^T A(v) \xi \leq \frac{1}{\alpha} \vert \xi \vert^2\,, \qquad \xi \in \mathbb{R}^2 \,, \quad (z,r) \in \Omega \,, \quad v \in S(\kappa)\,.
\end{align*}
\end{lem}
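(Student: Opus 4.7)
The plan is to observe that the matrix $A(v)$ has a remarkably simple determinant, after which uniform ellipticity follows by an eigenvalue argument. I would proceed as follows.

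First I would compute the determinant of $A(v)$ directly from \eqref{Lvdivergence}:
\begin{equation*}
\det A(v) \;=\; \sigma^2(1-v)\cdot \frac{1+\sigma^2(\partial_z v)^2(2-r)^2}{1-v} \;-\; \sigma^4 (\partial_z v)^2 (2-r)^2 \;=\; \sigma^2.
\end{equation*}
So $\det A(v)$ is \emph{constant}, independent of $v$ and of $(z,r) \in \Omega$. Together with $a_{11}(v) = \sigma^2(1-v) > 0$ on $S(\kappa)$, this shows that $A(v)$ is symmetric positive definite pointwise, with eigenvalues $0 < \lambda_-(v) \le \lambda_+(v)$ satisfying $\lambda_-(v)\lambda_+(v) = \sigma^2$.

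Next I would produce a uniform upper bound on $\lambda_+(v)$ on $S(\kappa)$, which by the determinant identity yields a uniform lower bound $\lambda_-(v) \ge \sigma^2/\lambda_+(v)$. For this, it suffices to bound the entries of $A(v)$. Since $q>2$, the Sobolev embedding $W^2_{q,D}(-1,1) \hookrightarrow C^1([-1,1])$ gives a constant $c=c(q)$ with $\|\partial_z v\|_{C([-1,1])} \le c\|v\|_{W^2_q(-1,1)} \le c/\kappa$ for $v\in S(\kappa)$, and the pointwise bound $-1+\kappa \le v \le 1-\kappa$ gives $\kappa \le 1-v \le 2-\kappa$. Combined with $0 \le 2-r \le 1$ on $\Omega$, this produces
\begin{equation*}
|a_{11}(v)| \le 2\sigma^2,\qquad |a_{12}(v)| = |a_{21}(v)| \le \sigma^2 c/\kappa,\qquad |a_{22}(v)| \le \frac{1 + \sigma^2 c^2/\kappa^2}{\kappa},
\end{equation*}
uniformly in $v \in S(\kappa)$ and $(z,r) \in \Omega$. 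Hence $\lambda_+(v) \le \mathrm{tr}\,A(v) \le M(\kappa,\sigma)$ for some constant $M(\kappa,\sigma)$ depending only on $\kappa$ and $\sigma$ (and on the fixed $q$), so $\lambda_-(v) \ge \sigma^2/M(\kappa,\sigma)$.

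Finally I would choose $\alpha = \alpha(\kappa) > 0$ small enough that $\alpha \le \sigma^2/M(\kappa,\sigma)$ and $1/\alpha \ge M(\kappa,\sigma)$, and conclude from the spectral decomposition of the symmetric matrix $A(v)$ that
\begin{equation*}
\alpha |\xi|^2 \;\le\; \lambda_-(v)|\xi|^2 \;\le\; \xi^T A(v)\xi \;\le\; \lambda_+(v)|\xi|^2 \;\le\; \tfrac{1}{\alpha}|\xi|^2
\end{equation*}
for all $\xi \in \mathbb{R}^2$, $(z,r) \in \Omega$ and $v \in S(\kappa)$. There is no real obstacle here; the only point requiring a little care is the quantitative $C^1$-bound on $\partial_z v$, which is why the definition \eqref{Skappa} combines a $W^2_q$-norm bound with the $q>2$ hypothesis.
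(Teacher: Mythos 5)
Your proposal is correct and follows essentially the same line of argument as the paper: both bound $\mu_+$ above by $\mathrm{tr}\,A(v)$, bound $\mu_-$ below by $\det A(v)/\mathrm{tr}\,A(v)$, and control both quantities uniformly on $S(\kappa)$ via the $W^2_q\hookrightarrow C^1$ embedding and the pointwise bounds in the definition of $S(\kappa)$. Your explicit observation that $\det A(v)\equiv\sigma^2$ is a nice clarification that the paper leaves implicit, but it does not change the method.
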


\begin{proof}
The real eigenvalues $\mu_{\pm}$ of $A(v)$ satisfy
\begin{align*}
 \mathrm{tr} \big (A(v) \big )= \mu_+ + \mu_- \,, \qquad \det \big (A(v) \big )= \mu_+ \mu_-.
\end{align*}

Computing the expressions for the trace and the determinant explicitly, we easily see that there exists $\alpha(\kappa) > 0$ with 
\begin{align*}
\frac{1}{\alpha(\kappa)}\geq \mathrm{tr} \big (A(v) \big)\geq \mu_+ \geq \mu_- \geq \frac{\det \big (A(v) \big )}{\mathrm{tr}\big (A(v) \big )} \geq \alpha(\kappa) > 0 
\end{align*}
for all $(z,r) \in \Omega$ and $v \in S(\kappa)$.
\end{proof}

{\bf Weak Solutions.}
We consider weak solutions to \eqref{selliptic:eq1} and corresponding a-priori estimates.
Though existence and uniqueness results for solutions are usually supplemented by a-priori estimates, see \cite[Corollary 8.7, Lemma 9.17]{GT98} and \cite[Theorem 6.2.6]{Evans10}, we have to repeat the arguments to include the $v$-dependency.
 
 \begin{lem}\label{selliptic1}\label{selliptic2}
 For each $v\in S(\kappa)$ and each $F \in W^{-1}_{2,D}(\Omega)$, there exists a unique weak solution $\Phi_v \in W^1_{2,D}(\Omega)$ to \eqref{selliptic:eq1}, i.e. to the equation 
  \begin{align*}
  \begin{cases}
   L_v \Phi_v &= F \quad \text{in} \quad \Omega\,, \\
   \ \ \ \Phi_v &= 0 \quad \text{on} \quad \partial \Omega\,. 
  \end{cases}
 \end{align*}
Moreover, there exists $C_1(\kappa)>0$ (independent of $F$, $\Phi_v$ and $v$) such that 
  \begin{align}
   \Vert \Phi_v  \Vert_{W^1_{2,D}(\Omega)}  \leq C_1(\kappa) \Vert F \Vert_{W^{-1}_{2,D}(\Omega)} \,. \label{selliptic:eq2}
  \end{align}
 \end{lem}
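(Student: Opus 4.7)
The plan is to apply the Lax--Milgram theorem to an equivalent \emph{symmetric} weak formulation of \eqref{selliptic:eq1}, obtained by exploiting a hidden weighted divergence structure of the operator $L_v$. To this end, introduce the weight
\begin{equation*}
\rho(z,r) := (1-v(z))\,r + 2v(z), \qquad (z,r) \in \overline{\Omega},
\end{equation*}
which is nothing but the original radial coordinate of $\Omega(v)$, pulled back to $\Omega$ under $T_v^{-1}$. For $v \in S(\kappa)$ the weight satisfies $\rho(z,r) \in [\kappa, 2]$, since $\rho$ is increasing in $r$ with minimum $1+v(z) \geq \kappa$ at $r=1$ and value $2$ at $r=2$. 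Using the explicit expressions in \eqref{Lvdivergence} together with $\nabla \rho = (\partial_z v\,(2-r),\,1-v)^T$, a direct pointwise computation yields
\begin{equation*}
A(v)\,\nabla \rho \;=\; (0,1)^T \;=\; \rho\,d(v).
\end{equation*}
Combining this identity with $\mathrm{div}(\rho\,A(v)\nabla \Phi) = \nabla \rho \cdot A(v)\nabla\Phi + \rho\,\mathrm{div}(A(v)\nabla\Phi)$ and the symmetry of $A(v)$ gives the operator representation $L_v \Phi = \tfrac{1}{\rho}\,\mathrm{div}\bigl(\rho\,A(v)\nabla \Phi\bigr)$, so that in the distributional sense \eqref{selliptic:eq1} becomes $\mathrm{div}(\rho\,A(v)\nabla \Phi_v) = \rho F$ with homogeneous Dirichlet data, i.e.
\begin{equation*}
\tilde a_v(\Phi_v, \varphi) := \int_\Omega \rho\,A(v)\nabla \Phi_v \cdot \nabla \varphi\,\mathrm{d}z\,\mathrm{d}r \;=\; -\langle \rho F, \varphi \rangle \qquad \text{for all } \varphi \in W^1_{2,D}(\Omega).
\end{equation*}

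Continuity and coercivity of the symmetric form $\tilde a_v$ on $W^1_{2,D}(\Omega)$, uniformly in $v \in S(\kappa)$, are then immediate. Continuity follows from $\rho \leq 2$ and the upper ellipticity bound of Lemma~\ref{uniformelliptic}. For coercivity, the lower bounds $\rho \geq \kappa$ and $A(v) \geq \alpha(\kappa)\,\mathrm{Id}$ combined with the Poincaré inequality on $W^1_{2,D}(\Omega)$ yield
\begin{equation*}
\tilde a_v(\Phi,\Phi) \;\geq\; \kappa\,\alpha(\kappa)\,\|\nabla \Phi\|_{L_2(\Omega)}^2 \;\geq\; c(\kappa)\,\|\Phi\|_{W^1_{2,D}(\Omega)}^2.
\end{equation*}
The functional $\varphi \mapsto \langle \rho F, \varphi\rangle = \langle F, \rho\varphi\rangle$ is continuous on $W^1_{2,D}(\Omega)$ with norm $\lesssim \|\rho\|_{W^1_\infty(\Omega)}\,\|F\|_{W^{-1}_{2,D}(\Omega)}$, and $\|\rho\|_{W^1_\infty(\Omega)}$ is controlled uniformly on $S(\kappa)$ via the embedding $W^2_{q}(-1,1) \hookrightarrow C^1([-1,1])$ (recall $q>2$). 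The Lax--Milgram theorem then delivers a unique $\Phi_v \in W^1_{2,D}(\Omega)$ together with the estimate $\|\Phi_v\|_{W^1_{2,D}(\Omega)} \leq C_1(\kappa)\|F\|_{W^{-1}_{2,D}(\Omega)}$.

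The main technical obstacle is spotting the identity $A(v)\nabla\rho = \rho\,d(v)$, which reveals the hidden weighted divergence structure of $L_v$. This is no accident: it reflects the fact that the equation on $\Omega(v)$ is the symmetric weighted Laplace equation $\partial_z(\sigma^2 r\,\partial_z\psi) + \partial_r(r\,\partial_r\psi)=0$ arising from the rotational symmetry of the set-up, and $T_v$ preserves this symmetric structure up to the weight $\rho$. Without this symmetrization, a direct Lax--Milgram attempt on the non-symmetric bilinear form $\int_\Omega A(v)\nabla\Phi \cdot \nabla\varphi - \int_\Omega (d(v)\cdot\nabla\Phi)\varphi$ would fail: integration by parts gives $\int_\Omega (d(v)\cdot\nabla\Phi)\Phi\,\mathrm{d}z\,\mathrm{d}r = \tfrac{1}{2}\int_\Omega (1-v)\rho^{-2}\Phi^2\,\mathrm{d}z\,\mathrm{d}r \geq 0$, so that the lower-order contribution has the wrong sign and would force a G\aa rding-plus-Fredholm detour together with a separate uniqueness argument.
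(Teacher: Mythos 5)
Your proof is correct, and it takes a genuinely different route from the paper's. The paper proves existence/uniqueness by invoking the Fredholm alternative (\cite[Theorem 8.3]{GT98}) and derives the a-priori bound \eqref{selliptic:eq2} in two steps: first testing with $\Phi_v$ to obtain $\Vert \Phi_v \Vert_{W^1_2(\Omega)} \leq C(\kappa)(\Vert \Phi_v \Vert_{L_2(\Omega)} + \Vert F\Vert_{W^{-1}_{2,D}(\Omega)})$, and then eliminating the $L_2$-term by a compactness/contradiction argument as in \cite[Lemma 9.17]{GT98}. You instead spot the algebraic identity $A(v)\nabla\rho = \rho\,d(v)$ (which I verified: the first component cancels and the second reduces to $1$), giving the weighted divergence representation $L_v\Phi = \rho^{-1}\mathrm{div}(\rho A(v)\nabla\Phi)$, whence the equivalent weak form is symmetric and coercive uniformly on $S(\kappa)$, and Lax--Milgram delivers existence, uniqueness and \eqref{selliptic:eq2} in one shot. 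Your approach buys a fully constructive and self-contained argument with no Fredholm alternative and no contradiction step; the explicit observation that the first-order term has the \emph{unfavorable} sign after integration by parts (so that the non-symmetric form is not obviously coercive) also explains correctly why the paper must go through \cite{GT98}. The one thing your write-up glosses over slightly is the equivalence of the two weak formulations: you should state that $\varphi \mapsto \rho\varphi$ is an isomorphism of $W^1_{2,D}(\Omega)$ (using $\rho \in W^1_\infty(\Omega)$, $\rho \geq \kappa$, and $\rho\varphi = 0$ on $\partial\Omega$), and that under this substitution the $d(v)$-term in the original weak form exactly cancels against the $\tilde\varphi\,\nabla\rho$-contribution by the identity $A(v)\nabla\rho = \rho\,d(v)$ together with the symmetry of $A(v)$. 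This is a one-line verification, but it is what actually justifies passing from the original weak problem to the symmetrized one and back.
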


 \begin{proof}
The existence of a unique weak solution to problem \eqref{selliptic:eq1} is a consequence of \cite[Theorem 8.3]{GT98}. So it remains to prove \eqref{selliptic:eq2}:\\
{\bf(i)} As a first step, we show the existence of $C(\kappa) > 0$ with 
  \begin{align}
  \Vert \Phi_v \Vert_{W^1_2(\Omega)} &\leq C(\kappa)\, \big ( \Vert \Phi_v \Vert_{L_2(\Omega)} +\Vert F \Vert_{W_{2,D}^{-1}(\Omega)} \big ) \label{selliptic2.eq1}
 \end{align}
 for each $v \in S(\kappa)$ and $F \in W^{-1}_{2,D}(\Omega)$. To this end, we test the weak formulation of \eqref{selliptic:eq1} with $\Phi= \Phi_v$ resulting in
  \begin{align*}
   \int_\Omega \nabla \Phi^T A(v) \nabla \Phi\, \mathrm{d} (z,r) =\int_\Omega \big (d(v) \cdot \nabla\Phi \big ) \Phi \, \mathrm{d} (z,r) -\langle F , \Phi \rangle_{W^1_2(\Omega)}\,.
  \end{align*}
 Combining now the uniform ellipticity of $-L_v$ with $v$-independent ellipticity constant $\alpha(\kappa)>0$ from Lemma \ref{uniformelliptic} with Friedrich's inequality yields
 \begin{align*}
  \Vert \Phi \Vert_{W^1_2(\Omega)}^2 &\leq C(\kappa) \left (  \Big \vert \int_\Omega\big (d(v) \cdot \nabla\Phi \big ) \Phi \, \mathrm{d} (z,r) \Big \vert + \Vert F \Vert_{W_{2,D}^{-1}(\Omega)} \Vert \Phi \Vert_{W^1_2(\Omega)} \right )
 \end{align*}
 for some $C(\kappa) >0$. Finally, the fact that $\Vert d(v) \Vert_{\infty}$ is uniformly bounded on $S(\kappa)$ together with Hölder's inequality and Young's inequality gives
 \begin{align*}
  \Vert \Phi \Vert_{W^1_2(\Omega)}^2 &\leq C(\kappa) \big ( \Vert \Phi \Vert_{L_2(\Omega)}^2 +\Vert F \Vert_{W_{2,D}^{-1}(\Omega)} \Vert \Phi \Vert_{W^1_2(\Omega)} \big )
 \end{align*}
for some new $C(\kappa)>0$, which is obviously equivalent to \eqref{selliptic2.eq1}. \\
{\bf(ii)} Next, the $L_2$-norm of $\Phi_v$ on the right-hand side of \eqref{selliptic2.eq1} has to be eliminated. However, for this we can proceed by contradiction as in \cite[Lemma 9.17]{GT98}, and we refer to \cite[Lemma 3.2]{LSS24} for details.
 \end{proof}

{\bf Regularity Step: Strong Solutions.} We establish that $\Phi_v$ is a strong solution to \eqref{selliptic:eq1} if the right-hand side $F$ is more regular. Since $\Omega$ is a rectangle, i.e. a domain with corners, this result does not follow from standard elliptic regularity theory,
but from Theorem \ref{5}. 
 
\begin{lem}\label{selliptic3}
For each $v\in S(\kappa)$ and each $F \in L_2(\Omega)$, there exists a unique strong solution $\Phi_v \in W^2_{2,D}(\Omega)$ to \eqref{selliptic:eq1}, i.e to the equation 
 \begin{align*}
  \begin{cases}
   L_v \Phi_v &= F \quad \text{in} \quad \Omega\,, \\
   \ \ \ \Phi_v &= 0 \quad \text{on} \quad \partial \Omega\,.
  \end{cases}
 \end{align*}
 Moreover, there exists $C_2(\kappa)>0$ (independent of $F$, $\Phi_v$ and $v$) such that 
  \begin{align*}
   \Vert \Phi_v  \Vert_{W^2_{2}(\Omega)}  \leq C_2(\kappa) \Vert F \Vert_{L_2(\Omega)} \,.
  \end{align*}
 \end{lem}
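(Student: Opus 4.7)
The plan is to realize the strong solution $\Phi_v$ simply as the weak solution provided by Lemma \ref{selliptic1} applied with $F \in L_2(\Omega) \hookrightarrow W^{-1}_{2,D}(\Omega)$, and then to bootstrap its regularity from $W^1_{2,D}(\Omega)$ to $W^2_{2,D}(\Omega)$ by invoking the convex-domain elliptic regularity result Theorem \ref{5} from Appendix \ref{ERCD}. This is permitted because the reference rectangle $\Omega=(-1,1)\times(1,2)$ is convex, so that Theorem \ref{5} applies despite the corners. Uniqueness is automatic since any two strong solutions differ by an element of $W^1_{2,D}(\Omega)$ solving \eqref{selliptic:eq1} with $F=0$, which must vanish by Lemma \ref{selliptic1}.

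To invoke Theorem \ref{5}, I would read \eqref{selliptic:eq1} in the divergence form \eqref{Lvdivergence} and move the first-order term to the right-hand side:
\begin{align*}
\operatorname{div}\bigl(A(v)\nabla \Phi_v\bigr) \;=\; F - d(v)\cdot \nabla \Phi_v \;=:\; G \quad \text{in } \Omega,\qquad \Phi_v=0 \text{ on } \partial\Omega.
\end{align*}
Since $F\in L_2(\Omega)$, $\nabla \Phi_v \in L_2(\Omega;\mathbb{R}^2)$ by Lemma \ref{selliptic1}, and $\|d(v)\|_{L_\infty(\Omega)}$ is uniformly bounded for $v\in S(\kappa)$ (because $2v+(1-v)r \geq \kappa$ on $\overline{\Omega}$), we have $G\in L_2(\Omega)$. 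Moreover, for $q>2$ the one-dimensional embedding $W^2_q(-1,1)\hookrightarrow C^1([-1,1])$ shows that $v$ and $\partial_z v$ are Lipschitz on $[-1,1]$ with norms controlled by $1/\kappa$, so that the entries of $A(v)$ are Lipschitz on $\overline{\Omega}$ with a Lipschitz constant depending only on $\kappa$. Combined with the uniform ellipticity bound from Lemma \ref{uniformelliptic}, these are exactly the structural hypotheses required in Theorem \ref{5}, which consequently yields $\Phi_v \in W^2_{2,D}(\Omega)$ together with an estimate
\begin{align*}
\Vert \Phi_v \Vert_{W^2_2(\Omega)} \;\leq\; C(\kappa)\bigl(\Vert G \Vert_{L_2(\Omega)} + \Vert \Phi_v \Vert_{W^1_2(\Omega)}\bigr).
\end{align*}

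It then remains to post-process this into the desired bound in terms of $\Vert F\Vert_{L_2(\Omega)}$. Using $\Vert G\Vert_{L_2(\Omega)} \leq \Vert F\Vert_{L_2(\Omega)} + C(\kappa)\Vert \Phi_v\Vert_{W^1_2(\Omega)}$ and chaining with the weak a-priori estimate \eqref{selliptic:eq2} from Lemma \ref{selliptic1}, one has
\begin{align*}
\Vert \Phi_v \Vert_{W^1_2(\Omega)} \;\leq\; C_1(\kappa)\, \Vert F \Vert_{W^{-1}_{2,D}(\Omega)} \;\leq\; C(\kappa)\, \Vert F \Vert_{L_2(\Omega)},
\end{align*}
so that absorbing yields $\Vert \Phi_v\Vert_{W^2_2(\Omega)} \leq C_2(\kappa)\Vert F\Vert_{L_2(\Omega)}$ with $C_2$ depending only on $\kappa$.

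The main obstacle I expect is the last word of the claim, namely the uniformity of the constant $C_2(\kappa)$ over all $v\in S(\kappa)$. This is not automatic from Theorem \ref{5} itself, but reduces to checking that every quantity entering its proof (the ellipticity constant, the $L_\infty$-bounds on $A(v)$ and $d(v)$, and the Lipschitz constant of the coefficients) is controlled solely by the two defining ingredients of $S(\kappa)$: the uniform $W^2_q$-bound $\Vert v\Vert_{W^2_q(-1,1)}\leq 1/\kappa$ and the non-degeneracy $-1+\kappa\leq v\leq 1-\kappa$. Once these uniform bounds are verified via the one-dimensional embedding $W^2_q \hookrightarrow C^1$ and inspection of the explicit formulas for $A(v)$ and $d(v)$, the estimate is uniform as required.
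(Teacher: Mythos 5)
Your overall architecture matches the paper's proof: realize $\Phi_v$ as the weak solution from Lemma \ref{selliptic2}, move the drift term $d(v)\cdot\nabla\Phi_v$ to the right-hand side, invoke the convex-domain elliptic regularity Theorem \ref{5}, and close the estimate using $\Vert F \Vert_{W^{-1}_{2,D}(\Omega)} \leq \Vert F\Vert_{L_2(\Omega)}$ and the weak a-priori bound. There is, however, a flaw in the way you verify the hypotheses of Theorem \ref{5}. You claim that the embedding $W^2_q(-1,1)\hookrightarrow C^1([-1,1])$ makes $v$ \emph{and} $\partial_z v$ Lipschitz on $[-1,1]$ and hence $A(v)$ Lipschitz on $\overline{\Omega}$. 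This is false in general: for $v\in W^2_q(-1,1)$ with $2<q<\infty$ one has $\partial_z v\in W^1_q(-1,1)\hookrightarrow C^{0,1-1/q}([-1,1])$, which is H\"older but not Lipschitz. You also misstate what Theorem \ref{5} requires: its hypothesis is not Lipschitz coefficients but merely $A\in\bigl[W^1_q(\Omega_0)\bigr]^{n\times n}$ with $q>n$, and its constant depends on the $W^1_q$-norm of the entries, the ellipticity constant $\alpha$, and the domain. This weaker requirement is exactly what holds here and what the paper checks: the entries of $A(v)$ are products and quotients of $v$, $\partial_z v$, $2-r$ and $1/(1-v)$, so by the Banach-algebra property of $W^1_q(-1,1)$ and $W^1_q\hookrightarrow C$ one gets $\sum_{i,j}\Vert a_{ij}(v)\Vert_{W^1_q(\Omega)}\leq C(\kappa)$ uniformly on $S(\kappa)$, together with the $v$-independent ellipticity constant from Lemma \ref{uniformelliptic}. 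Replacing your Lipschitz claim by this $W^1_q$-bound makes the argument identical in substance to the paper's.
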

 
 \begin{proof} By Lemma \ref{selliptic2}, we find $\Phi_v \in W^1_{2,D}(\Omega)$ being the unique weak solution to
 \begin{align*}
 \begin{cases}
  \mathrm{div}\big ( A(v)\nabla w \big ) &=  F - d(v) \cdot \nabla \Phi_v  \quad \text{in} \quad \Omega\,, \\
  \quad \quad \quad \quad \ \, w&= 0 \quad \text{on} \quad \Omega \,.
 \end{cases}
\end{align*}
From the facts that $W^1_{q}(-1,1)$ is a Banach algebra and $W^1_q(-1,1) \hookrightarrow C([-1,1])$, we deduce that the coefficients of $-L_v$ satisfy
\begin{align*}
\sum_{i,j=1}^2 \Vert a_{ij}(v) \Vert_{W^1_q(\Omega)} +\sum_{i=1}^2 \Vert d_i(v) \Vert_{\infty} \leq C(\kappa)\,, \qquad v \in S(\kappa)\,.
\end{align*}

 Because the ellipticity constant $\alpha(\kappa) >0$ of $-L_v$ is independent of $v \in S(\kappa)$, see Lemma \ref{uniformelliptic}, we deduce from Theorem \ref{5}
 that $\Phi_v$ belongs to $W^2_{2,D}(\Omega)$ and that there exists $C(\kappa) > 0$ with
  \begin{align*}
   \Vert \Phi_v \Vert_{W_2^2(\Omega)} \leq C(  \kappa) \big ( \Vert \Phi_v \Vert_{W^1_2(\Omega)} + \Vert F \Vert_{L_2(\Omega)} \big)\,.
  \end{align*}
 Since $\Vert F \Vert_{W^{-1}_{2,D}(\Omega)} \leq \Vert F \Vert_{L_2(\Omega)}$, it follows from Lemma \ref{selliptic2} that 
 \begin{align*}
  \Vert \Phi_v \Vert_{W_2^2(\Omega)} \leq C_2( \kappa) \Vert F \Vert_{L_2(\Omega)} \,
 \end{align*}
 for some $C_2(  \kappa) >0$, and the proof is complete.
 \end{proof}
 
In summary, the previous two Lemmata \ref{selliptic2} and \ref{selliptic3} ensure unique weak and strong solvability of \eqref{selliptic:eq1}. More precisely, for $v \in S(\kappa)$, the operator
 \begin{align}
  L_D(v) \Phi := L_v \Phi\,, \qquad \Phi \in W^1_{2,D}(\Omega) \label{defLDv}
 \end{align}
 \vspace{-2mm}satisfies
$$L_D(v) \in \mathcal{L}_{is} ( W^1_{2,D}(\Omega), W^{-1}_{2,D}(\Omega)) \cap \mathcal{L}_{is}(W^2_{2,D}(\Omega) , L_2(\Omega)),$$
and its inverse $L_D(v)^{-1}$ is uniformly bounded for $v \in S(\kappa)$. \\

From a solution to \eqref{selliptic:eq1} one easily obtains a solution to the transformed electrostatic problem:
Noting that $f_v:=L_v \frac{\ln(r)}{\ln(2)}$ belongs to $L_2(\Omega)$ one finds that 
\begin{align}
\phi_v := -L_D(v)^{-1} f_v +\frac{\ln(r)}{\ln(2)} \in W^2_2(\Omega)  \label{defphiv}
\end{align}
is the unique strong solution to the transformed electrostatic problem \eqref{ellsub}. Thanks to 
 \begin{align}
 \Vert f_v \Vert_{L_2(\Omega)} \leq C( \kappa) \,, \qquad v \in S(\kappa)\,,  \label{Estimatefv}
\end{align}
and the uniform estimates on $L_D(v)^{-1}$, the function $\phi_v$ also satisfies a uniform estimate
\begin{align}
\Vert \phi_v \Vert_{W^2_2(\Omega)} \leq C( \kappa) \,, \qquad v \in S(\kappa)\,.\label{Estimatephiv}\\ \nonumber
\end{align}

\begin{bem}\label{refl}
 We briefly comment on the regularity of the original electrostatic potential $\psi_v=\phi_v \circ T_v$ solving 
 \begin{align*}
  \begin{cases}
   \displaystyle\frac{1}{r} \partial_r (r \partial_r \psi_v)+ \partial_z^2 \psi_v&=0  \quad \text{in} \quad \Omega(v)\,,\\
   \qquad \qquad \qquad \psi_v&=h_v  \quad \text{on} \quad \partial \Omega(v)\,,
  \end{cases}
 \end{align*}
 where we set $\sigma =1$ in this remark, and $h_v$ is given by \eqref{RBPsi42}.
 Due to the corners of $\Omega(v)$, one might expect the regularity $\psi_v \in W^2_2(\Omega(v)) \cap C^\infty \big (\overline{\Omega(v)}\setminus \lbrace  (\pm 1, 1), (\pm 1,2) \rbrace \big) $ to be optimal in general. However, one can show that $\psi_v$ is smooth up to the 
 boundary in $(\pm 1,2)$. In addition, if $v \in W^3_{\infty}(-1,1)$ with $v(\pm1)=v_z(\pm1)=v_{zz}(\pm 1)=0$, then $\psi_v \in C^{2,\alpha}(\overline{\Omega(v)})$ for any $\alpha \in (0,1)$, i.e. $\psi_v$ is a classical solution. This follows from the Schwarz reflection principle \cite[Exercise 2.4]{GT98} and Schauder Theory, see \cite[Lemma 6.18]{GT98}.\\
\end{bem}

{\bf Fine Tuning Via Interpolation.} Finally, using interpolation theory, we get an improved norm estimate for the inverse of $L_D(v)$, which results in better estimates for $[v \mapsto \phi_v]$ in the next subsection.
The proof is exactly the same as in \cite{ELW15}.

\begin{prop}\label{IntA}
Given $\theta \in [0,1] \setminus \lbrace 1/2 \rbrace$, there is a constant $C_3( \kappa) >0$ such that
\begin{align*}
 \Vert L_D(v)^{-1} \Vert_{\mathcal{L}(W_{2,D}^{\theta-1}(\Omega) , W_{2,D}^{\theta+1} (\Omega))} \leq C_3( \kappa)\,, \qquad v \in 
S(\kappa)\,.
\end{align*}
\end{prop}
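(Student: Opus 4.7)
The plan is to obtain this by complex interpolation between the two endpoints $\theta=0$ and $\theta=1$, which are exactly the contents of Lemmas~\ref{selliptic2} and~\ref{selliptic3}. Indeed, from \eqref{selliptic:eq2} we have $L_D(v)^{-1} \in \mathcal{L}(W^{-1}_{2,D}(\Omega), W^1_{2,D}(\Omega))$ with norm bounded by $C_1(\kappa)$, and from Lemma~\ref{selliptic3} we have $L_D(v)^{-1} \in \mathcal{L}(L_2(\Omega), W^2_{2,D}(\Omega))$ with norm bounded by $C_2(\kappa)$. Both bounds are uniform in $v \in S(\kappa)$. The target space $W^{\theta+1}_{2,D}(\Omega)$ for $\theta \in [0,1]\setminus\{1/2\}$ is precisely the complex interpolation space $[W^1_{2,D}(\Omega), W^2_{2,D}(\Omega)]_\theta$, while the source space $W^{\theta-1}_{2,D}(\Omega)$ is $[W^{-1}_{2,D}(\Omega), L_2(\Omega)]_\theta$.

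With these identifications in hand, the abstract interpolation property for bounded linear operators between interpolation couples immediately yields
\begin{align*}
\|L_D(v)^{-1}\|_{\mathcal{L}(W^{\theta-1}_{2,D}(\Omega), W^{\theta+1}_{2,D}(\Omega))} \leq C_1(\kappa)^{1-\theta} C_2(\kappa)^{\theta} =: C_3(\kappa),
\end{align*}
for all $v \in S(\kappa)$, which is the claim.

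The only delicate point, and the reason the exceptional value $\theta = 1/2$ has to be excluded, is the identification of the interpolation scales with the Sobolev spaces $W^s_{2,D}(\Omega)$ carrying the Dirichlet boundary condition built in. The difficulty sits exactly at $s = 1/p = 1/2$, where the trace operator degenerates and the Dirichlet interpolation scale does not coincide with $W^{1/2}_2(\Omega)$; away from this threshold the standard identification $[W^{-1}_{2,D}(\Omega), L_2(\Omega)]_\theta = W^{\theta-1}_{2,D}(\Omega)$ and $[W^1_{2,D}(\Omega), W^2_{2,D}(\Omega)]_\theta = W^{\theta+1}_{2,D}(\Omega)$ holds, as recorded for instance in the references used in \cite{ELW15}. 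Invoking these identifications completes the argument, so that the proof reduces to citing the endpoint lemmas and applying interpolation, exactly as in \cite{ELW15}.
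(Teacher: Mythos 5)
Your proof is correct and follows exactly the approach the paper uses: the paper's own proof simply cites \cite[Lemma 2.3]{ELW15}, which establishes the bound by interpolating the endpoint estimates of Lemmas~\ref{selliptic2} and~\ref{selliptic3} (uniform on $S(\kappa)$) using the identification of the complex interpolation scale with the Dirichlet Sobolev spaces $W^s_{2,D}(\Omega)$, valid away from the critical exponent $s = 1/2$. You have also correctly located the reason for excluding $\theta = 1/2$: it is the source side, where $W^{\theta-1}_{2,D}(\Omega)$ is defined by duality against $W^{1-\theta}_{2,D}(\Omega)$, and the Dirichlet scale is ambiguous precisely at $1-\theta = 1/2$.
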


\begin{proof}
See \cite[Lemma 2.3]{ELW15}.
\end{proof}

\color{black}

 \subsection{Regularity of the Electrostatic Force}\label{21123c}
 
In this subsection, we prove Lipschitz continuity and analyticity of the electrostatic force $[v \mapsto g(v)]$ for which we adapt \cite{ELW15}.\\

For convenience, we recall the notation 
\begin{align*}
S(\kappa)= \Big \lbrace v \in W^2_{q,D}(-1,1) \, \Big \vert \, \Vert v \Vert_{W^2_q(-1,1)} \leq 1/\kappa\,, \ -1+\kappa \leq v(z) \leq 1-\kappa \Big \rbrace 
\end{align*}
for $\kappa >0$ and $q >2$ while $q^\prime$ denotes the dual exponent of $q$.\\

The desired Lipschitz continuity is proven in several steps. First, we derive continuity properties of $[v \mapsto L_v]$ where $L_v$ is defined in \eqref{Lvnondivergence}. Subsequently, we establish continuity of $[v \mapsto \phi_v]$, and finally, we transfer the continuity properties to $[v \mapsto g(v)]$. 
The regularity of $[v \mapsto L_v]$ follows as in \cite[Lemma 2.4]{ELW15}.

\begin{lem}\label{ALipschitz}
 Given $\xi \in [0, 1/q^\prime)$ and $\alpha \in (\xi,1)$, there exists $C_4( \kappa)$ such that
 \begin{align*}
  \Vert L_v - L_w \Vert_{\mathcal{L}(W^2_2(\Omega), W_{2,D}^{-\alpha}(\Omega))} \leq C_4( \kappa) \Vert v-w \Vert_{W_q^{2-\xi}(-1,1)} 
 \end{align*}
 for all $v,w \in S(\kappa)$.
\end{lem}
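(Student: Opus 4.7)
My plan is to split $L_v - L_w$ into a finite sum of terms, read off from the non-divergence expression \eqref{Lvnondivergence}, each of the schematic form
$$R_k(v,w)\,\partial_z^{j_k}(v-w)\,\partial^{m_k}\phi,$$
where $j_k \in \{0,1,2\}$ counts how many $z$-derivatives land on $v-w$, $\partial^{m_k}\phi$ is a partial derivative of $\phi$ of order $m_k$ with $j_k+m_k \le 2$, and $R_k(v,w)$ is a smooth function of $v,w,\partial_z v,\partial_z w$ whose $W^1_q(-1,1)$-norm is uniformly bounded on $S(\kappa)$. This splitting is immediate for the polynomial parts of \eqref{Lvnondivergence}; the rational coefficients (with denominators like $1-v$ or $2v+(1-v)r$) are handled by the elementary identity
$$F(v)-F(w) = (v-w)\int_0^1 F'\bigl(w + t(v-w)\bigr)\,\mathrm{d}t,$$
whose integrand is uniformly bounded on $S(\kappa)$, since the bounds $-1+\kappa \le v \le 1-\kappa$ and $\Vert v \Vert_{W^2_q} \le 1/\kappa$ keep every appearing denominator away from zero.

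With the decomposition in hand, each summand is estimated in $W^{-\alpha}_{2,D}(\Omega)$ via the multiplication theorem (Theorem \ref{multthm}). The $z$-only factor $\partial_z^{j_k}(v-w)$, regarded on $\Omega$, inherits the regularity $W^{2-\xi-j_k}_q$ from $v-w \in W^{2-\xi}_q(-1,1)$, while $\partial^{m_k}\phi \in W^{2-m_k}_2(\Omega)$. For $j_k \in \{0,1\}$, the $v-w$-factor still sits in a positive-order Sobolev space embedded into $C([-1,1])$ (since $q>2$ and $\xi<1/q'=1-1/q$), so the corresponding product estimate in $L_2(\Omega)\hookrightarrow W^{-\alpha}_{2,D}(\Omega)$ is essentially routine.

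The main obstacle is the term with $j_k=2$, coming from the coefficient of $\partial_r\phi$ in \eqref{Lvnondivergence}, whose contribution to $L_v-L_w$ is $-\sigma^2(2-r)\,\partial_z^2(v-w)\,\partial_r\phi$. Here $\partial_z^2(v-w)$ is only a distribution: by interpolation between $\partial_z^2 : W^2_{q,D}(-1,1)\to L_q(-1,1)$ and $\partial_z^2 : L_q(-1,1)\to W^{-2}_q(-1,1)$, one has $\partial_z^2(v-w) \in W^{-\xi}_q(-1,1)$ with norm controlled by $\Vert v-w\Vert_{W^{2-\xi}_q}$. Controlling its product with $\partial_r\phi \in W^1_2(\Omega)$ in $W^{-\alpha}_{2,D}(\Omega)$ then amounts, by duality, to showing that pointwise multiplication
$$W^1_2(\Omega)\times W^\alpha_{2,D}(\Omega) \longrightarrow W^\xi_{q'}(\Omega)$$
is continuous. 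Theorem \ref{multthm} in dimension $n=2$ with $(s_1,p_1)=(1,2)$, $(s_2,p_2)=(\alpha,2)$ and target $(s,p)=(\xi,q')$ reduces this to the numerical inequality $\xi - 2/q' < \alpha - 1$, i.e.\ $\alpha > \xi + 2/q - 1$, which is automatic from $\alpha > \xi$ because $q>2$ makes $2/q-1<0$. This is exactly the step that dictates the hypotheses $\xi \in [0,1/q')$ and $\alpha \in (\xi,1)$.

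Summing the contributions of all summands yields the announced Lipschitz estimate, with a constant $C_4(\kappa)$ depending only on $\kappa$ through the uniform $S(\kappa)$-bounds on the remainders $R_k$.
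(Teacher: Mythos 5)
Your overall strategy is the same as the paper's: decompose $L_v - L_w$ term by term, observe that every contribution with at most one $z$-derivative on $v-w$ is controlled pointwise because $W^{1-\xi}_q(-1,1)\hookrightarrow C([-1,1])$ when $\xi<1/q'$, isolate the critical $\partial_z^2(v-w)$ term, and estimate it by duality against $\psi\in W^\alpha_{2,D}(\Omega)$ via the multiplication theorem $W^1_2(\Omega)\cdot W^\alpha_{2,D}(\Omega)\hookrightarrow W^\xi_{q'}(\Omega)$. The numerical check of the multiplication theorem is correct, and the hypotheses $\xi\in[0,1/q')$, $\alpha\in(\xi,1)$ enter exactly where you say they do.

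There is, however, a gap hidden in the phrase \emph{``amounts, by duality, to showing that pointwise multiplication $W^1_2(\Omega)\times W^\alpha_{2,D}(\Omega)\to W^\xi_{q'}(\Omega)$ is continuous.''} The distribution $\partial_z^2(v-w)$ lives only in the one-dimensional space $W^{-\xi}_{q,D}(-1,1)$, while $(2-r)\partial_r\phi\,\psi$ is a function on the two-dimensional domain $\Omega$. To pair them, one first needs to apply Fubini and then show that the map
\begin{align*}
 g \longmapsto \int_1^2 g(\,\cdot\,,r)\,\mathrm{d}r
\end{align*}
is bounded from $W^\xi_{q'}(\Omega)$ to $W^\xi_{q'}(-1,1)$. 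This trace-type bound is not a consequence of the multiplication theorem; it is a separate real-interpolation estimate (easy at the endpoints $\xi=0,1$, nontrivial to interpolate), and the paper invokes a dedicated auxiliary lemma for it. Equivalently, you need the extension operator $a(z)\mapsto a(z)\otimes 1(r)$ to map $W^{-\xi}_{q,D}(-1,1)\to W^{-\xi}_{q,D}(\Omega)$ boundedly, which is the same statement by duality. Without this, the reduction to the 2D multiplication theorem is not justified. Finally, a minor slip: the constraint $j_k + m_k\le 2$ is false — the critical term has $j_k=2$, $m_k=1$, and several others (e.g. $\partial_z(v-w)\,\partial_z\partial_r\phi$) have $j_k+m_k=3$ — but since you handle each $j_k\le 1$ case by the $C([-1,1])$-embedding, this typo does not affect the substance of your argument.
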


 \begin{proof}
Let $v,w \in S(\kappa)$ and $\Phi \in W^2_2(\Omega)$. Then,
$L_v \Phi  \in L_2(\Omega) \subset W_{2,D}^{-\alpha}(\Omega)$ where the critical term $- \sigma^2 (2-r) \,\partial_z^2 v \,\partial_r \Phi$ of $L_v \Phi$ belongs to $L_2(\Omega)$ thanks to Hölder's inequality and the embedding $ W_2^1(\Omega) \hookrightarrow L_{\frac{2q}{q-2}}(\Omega)$. 
For $\psi \in W^\alpha_{2,D}(\Omega)$, the definition of $L_v$ in non-divergence form yields 
  \begin{align*}
 &\int_\Omega \big [ (L_v-L_w)\Phi \big ] \psi \, \mathrm{d} (z,r) \\
   &= \sigma^2 \int_\Omega  [w-v] \,\partial_z^2 \Phi\, \psi \,\mathrm{d}(z,r)\\
   &\quad \ \ -2\sigma^2 \int_\Omega  ( 2 -r  ) \,[\partial_z v-\partial_z w ]\, \partial_z\partial_r \Phi \,\psi \,\mathrm{d} (z,r) \\
   &\quad \ \ + \int_\Omega \bigg ( \frac{1+\sigma^2 (\partial_z v)^2 (2 -r)^2 }{1-v}-  \frac{1+\sigma^2 (\partial_z w)^2 (2 -r)^2 }{1-w} \bigg)\, \partial_r^2 \Phi \,\psi \, \mathrm{d} (z,r)\\
   &\quad \ \ - \sigma^2 \int_\Omega  (2 -r ) \,[\partial_z^2v-\partial_z^2w]\, \partial_r \Phi \,\psi \, \mathrm{d} (z,r) \\
   &\quad \ \ - 2\sigma^2\int_\Omega ( 2 -r  ) \,\bigg ( \frac{(\partial_z v)^2}{1-v} - \frac{(\partial_z w)^2}{1-w} \bigg )\, \partial_r \Phi\, \psi 
\, \mathrm{d}(z,r) \\
   &\quad \ \ +\int_\Omega \bigg ( \frac{1}{2v+(1-v)r} - \frac{1}{2w+(1-w)r} \bigg )\, \partial_r \Phi \,\psi \,\mathrm{d}(z,r) \\
   &=:I + II+\dots + VI \,.
  \end{align*}
We point out that $IV$ is the critical term since it contains the already mentioned second weak derivatives of $v$ (and $w$). Therefore, we will only treat $IV$ in detail. The other five terms can be estimated by using Hölder's inequality, the embedding $W^{2-\xi}_q(-1,1) \hookrightarrow C^1([-1,1])$ as well as the definition of $S(\kappa)$, which results in
\begin{align*}
\vert I \vert + \vert II \vert + \vert III \vert &+ \vert V \vert + \vert VI \vert \\
&\leq C \Vert v-w\Vert_{W_q^{2-\xi}(-1,1)}  \Vert \Phi \Vert_{W^2_2(\Omega)} \Vert \psi \Vert_{W^\alpha_{2,D}(\Omega)}\,.
\end{align*}

For $IV$, a simple application of Hölder's inequality combined with Sobolev's embedding theorem only yields existence of the integral but not the desired estimate. Instead, we argue as follows: Due to Fubini
's Theorem and the fact that $\partial_z^2 \in \mathcal{\mathcal{L}}\big ( W^{2-\xi}_{q}(-1,1), W^{-\xi}_{q,D}(-1,1) \big)$ by \cite[Theorem 1.4.4.6]{Grisvard85} (as $1-\xi \neq 1/q$), we find
\begin{align*}
\vert IV \vert &\leq \sigma^2 \bigg \vert \int_\Omega  ( 2-r)\,[\partial_z^2v- \partial_z^2 w ] \,\partial_r \Phi\, \psi \, \mathrm{d}(z,r) \ \bigg \vert  \\
&=\sigma^2  \bigg \vert \int_{-1}^1 [\partial_z^2v- \partial_z^2 w ] (z) \bigg ( \int_1^2  ( 2 -r   ) \partial_r  \Phi(z,r)\psi(z,r) \,\mathrm{d} r \bigg ) \,\mathrm{d} z \bigg \vert  \\
&\leq \sigma^2 \Vert \partial_z^2v- \partial_z^2 w \Vert_{W^{-\xi}_{q,D}(-1,1)} \bigg \Vert \int_1^2  ( 2 -r  ) \partial_r \Phi(\, \cdot \, ,r)\psi(\, \cdot \,,r) \, \mathrm{d} r \bigg \Vert_{W^\xi_{q^\prime}(-1,1)}  \\
&\leq C\,\sigma^2  \Vert v -w \Vert_{W^{2-\xi}_q(-1,1)} \bigg \Vert \int_1^2  (2-r  ) \partial_r \Phi(\, \cdot \, ,r)\psi(\, \cdot \,,r) \, \mathrm{d} r \bigg \Vert_{W^\xi_{q^\prime}(-1,1)}\,. 
\end{align*}
Here, we also used the fact that $W_{q^\prime,D}^\xi(-1,1)=W^\xi_{q^\prime}(-1,1)$ due to the choice $\xi < 1/q^\prime$ so that the dual space of $W^\xi_{q^\prime}(-1,1)$ coincides with $W^{-\xi}_{q,D}(-1,1)$.
Next, we apply the real interpolation method to obtain
\begin{align*}
\bigg \Vert \int_1^2  (2-r  ) \partial_r \Phi(\, \cdot \, ,r)\psi(\, \cdot \,,r) \, \mathrm{d} r \bigg \Vert_{W^\xi_{q^\prime}(-1,1)} \leq C \big \Vert (2-r) \partial_r \Phi \, \psi \big \Vert_{W^\xi_{q^\prime}(\Omega)} \,,
\end{align*} 
see \cite[Lemma A.2]{LSS24} for details, from which we deduce further that
 \begin{align*}
 \vert IV \vert \leq C\,\sigma^2  \Vert v-w \Vert_{W_q^{2-\xi}(-1,1)} \big \Vert (2-r) \partial_r \Phi \, \psi \big \Vert_{W^\xi_{q^\prime}(\Omega)} \,.
 \end{align*}
Finally, the Multiplication Theorem \ref{multthm} ensures
\begin{align*}
 W^1_2(\Omega) \cdot W^1_2(\Omega) \cdot W^\alpha_2 (\Omega) \hookrightarrow W^\xi_{q^\prime}(\Omega)\,,
\end{align*}
and we arrive at
\begin{align*}
\vert IV \vert &\leq C \Vert v-w \Vert_{W_q^{2-\xi}(-1,1)}  \Vert 2-r  \Vert_{W^1_2(\Omega)} \Vert \partial_r \Phi \Vert_{W^1_2(\Omega)} \Vert \psi \Vert_{W^\alpha_{2,D}(\Omega)} \\
&\leq C \Vert v-w\Vert_{W_q^{2-\xi}(-1,1)}  \Vert \Phi \Vert_{W^2_2(\Omega)} \Vert \psi \Vert_{W^\alpha_{2,D}(\Omega)}\,.\\
\end{align*}
Summing up the estimates for $I$ to $VI$, we have shown that
\begin{align*}
\Big \vert \int_\Omega \big [(L_v - L_w)\Phi \big ] \,\psi \, \mathrm{d} (z,r) \Big \vert \leq C_4( \kappa)\Vert v-w \Vert_{W_q^{2-\xi}(-1,1)} \, \Vert \Phi \Vert_{W^2_2(\Omega)} \, \Vert \psi \Vert_{W^\alpha_{2,D}(\Omega)} \,.
\end{align*}
Taking the supremum over $\psi \in W^\alpha _{2,D}(\Omega)$ with $\Vert \psi \Vert_{W_{2,D}^\alpha(\Omega)} \leq 1$, we get 
\begin{align*}
\big \Vert (L_v-L_w) \Phi \big \Vert_{W^{-\alpha}_{2,D}(\Omega)} \leq C_4( \kappa)\Vert v-w \Vert_{W_q^{2-\xi}(-1,1)} \Vert \Phi \Vert_{W^2_2(\Omega)} \,,
\end{align*}
and thus
\begin{align*}
\Vert L_v-L_w\Vert_{\mathcal{L}(W^2_2(\Omega) , W^{-\alpha}_{2,D}(\Omega))} \leq C_4( \kappa) \Vert v-w \Vert_{W_q^{2-\xi}(-1,1)}
\end{align*}
as claimed. 
\end{proof}

Next, we study the dependence of $\phi_v$ on $v$. The result is the analogue to \cite[Lemma 2.6]{ELW15}.

\begin{lem}\label{EstimateSolution}
Let $\xi \in [0,1/q^\prime)$ and $\alpha \in (\xi,1)$ with $\alpha \neq 1/2$ be given. Then, there exists $C_{5}( \kappa)$ such that 
\begin{align*}
\Vert \phi_v- \phi_w \Vert_{W_{2,D}^{2-\alpha}(\Omega)} \leq C_{5}( \kappa) \Vert v-w \Vert_{W_q^{2-\xi}(-1,1)}  \,, \qquad v,w \in S(\kappa)\,.
\end{align*}
\end{lem}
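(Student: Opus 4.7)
The plan is to reduce the difference $\phi_v - \phi_w$ to an expression involving only $L_D(v)^{-1}$ and $L_v - L_w$, and then combine the three tools already established in this section: the interpolation estimate of Proposition \ref{IntA}, the Lipschitz bound on the coefficient operator from Lemma \ref{ALipschitz}, and the uniform $W^2_2$-bound \eqref{Estimatephiv}.

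First I would observe that since $\phi_v$ and $\phi_w$ both satisfy the transformed electrostatic problem \eqref{ellsub} with the same Dirichlet datum $\ln(r)/\ln(2)$ on $\partial\Omega$, their difference lies in $W^1_{2,D}(\Omega)$. Applying $L_v$ to it gives
\begin{align*}
L_v(\phi_v - \phi_w) = -L_v \phi_w = (L_w - L_v)\phi_w,
\end{align*}
so by invertibility of $L_D(v)$ on $W^1_{2,D}(\Omega)$ we have the key identity
\begin{align*}
\phi_v - \phi_w = L_D(v)^{-1}\bigl((L_w - L_v)\phi_w\bigr).
\end{align*}

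Next I would choose $\theta := 1 - \alpha \in [0,1] \setminus \{1/2\}$, which is permissible precisely because $\alpha \in (\xi,1)$ with $\alpha \neq 1/2$. Proposition \ref{IntA} then yields
\begin{align*}
\bigl\Vert L_D(v)^{-1}\bigr\Vert_{\mathcal{L}(W^{-\alpha}_{2,D}(\Omega),\, W^{2-\alpha}_{2,D}(\Omega))} \leq C_3(\kappa),
\end{align*}
uniformly in $v \in S(\kappa)$. Combining this with Lemma \ref{ALipschitz} applied to $\phi_w \in W^2_2(\Omega)$ and the uniform estimate \eqref{Estimatephiv} for $\phi_w$, one obtains
\begin{align*}
\Vert \phi_v - \phi_w \Vert_{W^{2-\alpha}_{2,D}(\Omega)}
&\leq C_3(\kappa) \, \Vert (L_w - L_v)\phi_w \Vert_{W^{-\alpha}_{2,D}(\Omega)} \\
&\leq C_3(\kappa)\, C_4(\kappa) \, \Vert v - w \Vert_{W^{2-\xi}_q(-1,1)} \, \Vert \phi_w \Vert_{W^2_2(\Omega)} \\
&\leq C_5(\kappa) \, \Vert v - w \Vert_{W^{2-\xi}_q(-1,1)}.
\end{align*}

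There is no essential obstacle here; all the hard analytic work has already been done in Lemmata \ref{selliptic2}, \ref{selliptic3}, \ref{ALipschitz} and Proposition \ref{IntA}. The only delicate point is matching the interpolation scale with the target space, which is what forces the hypotheses $\alpha > \xi$ (so the loss in Lemma \ref{ALipschitz} is affordable) and $\alpha \neq 1/2$ (so Proposition \ref{IntA} applies with $\theta = 1-\alpha$); both are built into the statement.
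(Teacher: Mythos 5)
Your proof is correct, and it takes a genuinely more streamlined route than the paper. The paper proceeds from the representation $\phi_v = -L_D(v)^{-1} f_v + \ln(r)/\ln(2)$ with $f_v = L_v \frac{\ln(r)}{\ln(2)}$, splits $\phi_v - \phi_w = -L_D(v)^{-1}(f_v - f_w) + (L_D(w)^{-1} - L_D(v)^{-1})f_w$, estimates $f_v - f_w$ via Lemma \ref{ALipschitz}, and handles the second term with a resolvent-type identity $L_D(v)^{-1} - L_D(w)^{-1} = L_D(v)^{-1}(L_w - L_v)L_D(w)^{-1}$; this requires invoking Proposition \ref{IntA} at two exponents $\theta = 1-\alpha$ and $\theta = 1$, plus the uniform bound \eqref{Estimatefv} on $f_w$. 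You instead observe directly that $L_v(\phi_v - \phi_w) = (L_w - L_v)\phi_w$ (legitimate since $\phi_v - \phi_w \in W^2_{2,D}(\Omega)$, as both solutions share the boundary datum and lie in $W^2_2(\Omega)$), which collapses the difference to $\phi_v - \phi_w = L_D(v)^{-1}\bigl((L_w - L_v)\phi_w\bigr)$ in a single step. This invokes Proposition \ref{IntA} only at $\theta = 1-\alpha$ and replaces the bound on $f_w$ by the uniform bound \eqref{Estimatephiv} on $\phi_w$; algebraically the two decompositions produce the same final expression, but yours skips the intermediate bookkeeping with $f_v, f_w$ and the resolvent identity, and makes the role of the two hypotheses $\alpha > \xi$ and $\alpha \neq 1/2$ immediately transparent.
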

\begin{proof}
Let us recall from \eqref{defphiv} that 
\begin{align*}
\phi_v = -L_D(v)^{-1} f_v +\frac{\ln(r)}{\ln(2)} \,, \qquad f_v= L_v \frac{\ln(r)}{\ln(2)}\,.
\end{align*}
First, we deduce from Lemma \ref{ALipschitz} that
\begin{align}
\Vert f_v - f_w \Vert_{W_{2,D}^{-\alpha}(\Omega)} &\leq \Vert L_v-L_w \Vert_{\mathcal{L}(W^2_2(\Omega), W_{2,D}^{-\alpha}(\Omega))} \bigg \Vert \frac{\ln(r)}{\ln(2)} \bigg \Vert_{W_2^2(\Omega)} \nonumber\\
&\leq C( \kappa) \Vert v-w \Vert_{W_q^{2-\xi}(-1,1)} \,. \label{EstimateRHS}
\end{align}

Next, we write 
$$\phi_v-\phi_w =- L_D(v)^{-1} (f_v-f_w) + \big (L_D(w)^{-1} - L_D(v)^{-1}\big)f_w\,.$$ 

Then, a combination of \eqref{EstimateRHS} with Proposition \ref{IntA} (for $\theta=1-\alpha \neq 1/2$ and $\theta=1$) as well as Lemma \ref{ALipschitz} yields
\begin{align*}
\Vert \phi_v - \phi_w \Vert_{W_{2,D}^{2-\alpha}(\Omega)} &\leq \Vert L_D(v)^{-1}(f_v-f_w) \Vert_{W_{2,D}^{2-\alpha}(\Omega)} + \Vert (L_D(v)^{-1} -L_D(w)^{-1}) f_w \Vert_{W_{2,D}^{2-\alpha}(\Omega)} \\
&\leq \Vert L_D(v)^{-1} \Vert_{\mathcal{L}(W_{2,D}^{-\alpha}(\Omega), W^{2-\alpha}_{2,D} (\Omega))} \Vert f_v - f_w \Vert_{W^{-\alpha}_{2,D}(\Omega)} \\
&\quad \ \ + \Vert L_D(v)^{-1}(L_w-L_v)L_D(w)^{-1} f_w \Vert_{W^{2-\alpha}_{2,D}(\Omega)}\\
&\leq C( \kappa) \Vert v-w\Vert_{W_q^{2-\xi}(-1,1)}   + \Vert L_D(v)^{-1} \Vert_{\mathcal{L}(W^{-\alpha}_{2,D}, W^{2-\alpha}_{2,D}(\Omega))}\\
&\quad \ \ \times \Vert L_w-L_v \Vert_{\mathcal{L}(W_{2}^2(\Omega) , W_{2,D}^{-\alpha} (\Omega))} \Vert L_D(w)^{-1} \Vert_{\mathcal{L}(L_2(\Omega) , W^2_{2,D}(\Omega))} \Vert f_w \Vert_{L_2(\Omega)} \\
&\leq C( \kappa)\Vert v-w \Vert_{W_q^{2-\xi}(-1,1)} \big (1 + \Vert f_w \Vert_{L_2(\Omega)} \big ) \,.
\end{align*}
Finally, estimate \eqref{Estimatefv} ensures that the $L_2$-norm of $f_w$ is uniformly bounded on $S(\kappa)$, and the assertion follows.
 
\end{proof}

Having established continuity properties of $[v \mapsto \phi_v]$, we turn to the main issue of this section and provide Lipschitz continuity of the electrostatic force $[ v \mapsto g(v)]$. The result is an adaptation of \cite[Proposition 2.1]{ELW15}: 

\begin{prop}\label{LipschitzRHS}
Let $q \in (2,\infty)$, $\kappa \in (0,1)$ and $\lambda, \sigma > 0$. For $\xi \in [0,1/2)$ and $\nu \in [0, 1/2-\xi)$, the map
$$[v \mapsto g(v)]\,: \quad S(\kappa)\rightarrow W^\nu_{2,D}(-1,1)$$
is bounded, and there exists a constant $C_{6}( \kappa) >0$ such that
\begin{align}
\Vert g (v) &- g(w) \Vert_{W_{2,D}^\nu(-1,1)} \leq  \,C_6( \kappa) \Vert v-w \Vert_{W_{q,D}^{2-\xi}(-1,1)}  \label{eqEstimateRHS}
\end{align}
as well as 
\begin{align}
 \bigg \Vert \frac{1}{v+1} -\frac{1}{w+1} \bigg \Vert_{W_{2,D}^\nu(-1,1)} \leq  C_6( \kappa) \Vert v-w \Vert_{W_{q,D}^{2-\xi}(-1,1)} \,, \qquad v,w \in S(\kappa)\,. \label{eqEstimateRHSb}
\end{align}

\end{prop}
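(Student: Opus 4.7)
The plan is to reduce both estimates to the regularity of $[v \mapsto \phi_v]$ established in Lemma \ref{EstimateSolution}, using the trace theorem for the boundary $r=1$ of $\Omega$ together with the Multiplication Theorem \ref{multthm}. The starting point is the identity $T_v(z, v(z)+1) = (z, 1)$, which combined with the chain rule gives
$$\partial_r \psi_v(z, v(z)+1) = \frac{1}{1-v(z)}\, \partial_r \phi_v(z, 1)\,,$$
so that the electrostatic force takes the more tractable form
$$g(v) = \frac{\bigl(1+\sigma^2(\partial_z v)^2\bigr)^{3/2}}{(1-v)^2}\, \bigl|\partial_r \phi_v(\cdot, 1)\bigr|^2\,.$$
I would then decompose $g(v) - g(w)$ into three multilinear pieces according to this factorization. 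The two prefactors $(1+\sigma^2(\partial_z v)^2)^{3/2}$ and $1/(1-v)^2$ are smooth nonlinearities in $v$ and $\partial_z v$; since $W^{1-\xi}_q(-1,1)$ is a Banach algebra continuously embedded in $C([-1,1])$ (because $q>2$ and $\xi<1/2$ imply $1-\xi > 1/q$), and since $v$ and $1-v$ are pointwise bounded away from $\pm 1$ and $0$ on $S(\kappa)$, both prefactors are uniformly bounded and Lipschitz continuous from $S(\kappa)$ into $W^{1-\xi}_q(-1,1)$ by the standard Sobolev chain rule.

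The nontrivial factor is the squared trace $|\partial_r \phi_v(\cdot,1)|^2$. Because the hypothesis $\nu < 1/2-\xi$ leaves a nonempty window, I would fix an auxiliary parameter $\alpha \in (\xi, 1/2 - \nu)$ with $\alpha \neq 1/2$; this is the crucial calibration of the argument. With this choice $1-\alpha > 1/2$, so Lemma \ref{EstimateSolution} combined with the trace theorem on $\Omega$ yields
$$\bigl\|\partial_r\phi_v(\cdot,1) - \partial_r\phi_w(\cdot,1)\bigr\|_{W^{1/2-\alpha}_{2}(-1,1)} \leq C(\kappa)\,\|v-w\|_{W^{2-\xi}_q(-1,1)}\,,$$
while \eqref{Estimatephiv} and the trace theorem give the uniform bound $\|\partial_r\phi_v(\cdot,1)\|_{W^{1/2}_2(-1,1)} \leq C(\kappa)$. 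Writing
$$\bigl|\partial_r\phi_v(\cdot,1)\bigr|^2 - \bigl|\partial_r\phi_w(\cdot,1)\bigr|^2 = \bigl(\partial_r\phi_v - \partial_r\phi_w\bigr)(\cdot,1)\cdot\bigl(\partial_r\phi_v + \partial_r\phi_w\bigr)(\cdot,1)$$
and applying Theorem \ref{multthm} in one space variable with $s_1 = 1/2-\alpha$, $s_2 = 1/2$, $p_1 = p_2 = 2$ (for which the admissible target exponents satisfy $s < 1/2 - \alpha$, in particular $s=\nu$ is allowed) controls this difference in $W^\nu_2(-1,1)$. A further application of the Multiplication Theorem, combining the two smooth prefactors with the trace term, then assembles \eqref{eqEstimateRHS}. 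Boundedness of $[v\mapsto g(v)]$ on $S(\kappa)$ follows by choosing $w \equiv 0$, since $\phi_0 = \ln(r)/\ln(2)$ has a smooth, nonzero normal derivative.

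For estimate \eqref{eqEstimateRHSb} I would simply write
$$\frac{1}{v+1} - \frac{1}{w+1} = \frac{w-v}{(v+1)(w+1)}\,,$$
exploit the pointwise lower bound $v+1, w+1 \geq \kappa$ on $S(\kappa)$, and use the Banach algebra property of $W^{2-\xi}_q(-1,1)$ together with the continuous embedding $W^{2-\xi}_q(-1,1) \hookrightarrow W^\nu_{2,D}(-1,1)$ (valid since $q>2$ and $\nu < 1/2$). The main obstacle throughout is the squared-trace factor, which couples a normal derivative trace on the corner domain $\Omega$ with a bilinear nonlinearity: the exponent $\alpha$ has to simultaneously satisfy $\alpha > \xi$ so that Lemma \ref{EstimateSolution} applies, $\alpha < 1/2$ so that the trace theorem produces a positive regularity for $\partial_r\phi_v(\cdot,1)$, and $\alpha < 1/2 - \nu$ so that the Multiplication Theorem returns $W^\nu_2$, and this three-way compatibility is precisely what the hypothesis $\nu < 1/2-\xi$ guarantees.
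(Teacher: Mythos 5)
Your proposal takes essentially the same route as the paper: the identity $g(v)=(1+\sigma^2 v_z^2)^{3/2}\,|\partial_r\phi_v(\cdot,1)|^2/(1-v)^2$, the choice of an auxiliary index $\alpha\in(\xi,1/2-\nu)$, the chain of differentiation/trace applied to Lemma \ref{EstimateSolution} to control $\partial_r\phi_v(\cdot,1)-\partial_r\phi_w(\cdot,1)$ in $W^{1/2-\alpha}_2(-1,1)$, the uniform bound $\|\partial_r\phi_v(\cdot,1)\|_{W^{1/2}_2}\leq C(\kappa)$, and the telescoping into multilinear pieces handled by Theorem \ref{multthm} are all exactly what the paper does.

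One small technical point deserves care. You propose first multiplying the two traces into $W^{\nu}_2(-1,1)$ and \emph{then} multiplying by the $W^{1-\xi}_q$-prefactors. The second step requires an embedding of the form $W^{1-\xi}_q\cdot W^{\nu}_2\hookrightarrow W^{\nu}_2$, and Theorem \ref{multthm} fails here by exactly a strict inequality: with $s_2-n/p_2=\nu-1/2<0$ one would need $\nu-1/2<\nu-1/2$. The fix is trivial --- either apply the Multiplication Theorem \emph{once} with all factors simultaneously, e.g.\ $W^1_q\cdot W^{1/2}_2\cdot W^{1/2-\alpha}_2\hookrightarrow W^\nu_2$ (which is exactly what the paper does, putting the prefactors in $W^1_q$ and only the Lipschitz increment $v_z-w_z$ in $W^{1-\xi}_q$), or land the trace product in $W^{\nu'}_2$ with $\nu'\in(\nu,1/2-\alpha)$ and descend to $W^\nu_2$ in the second step. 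With that minor repair the argument is complete and matches the paper's proof; your observation that boundedness follows by comparing with $w\equiv 0$ and $\phi_0=\ln(r)/\ln(2)$ is a fine alternative to the paper's direct boundedness estimate.
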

\begin{proof} {\bf(i)} As a first step, we express the electrostatic force
$$g(v)= \big (1+\sigma^2 (\partial_z v )^2 \big)^{3/2} \,\big \vert \partial_r \psi_v\big ( z,v+1 \big ) \big \vert^2$$
defined in \eqref{filmdimensionless0} in terms of the transformed electrostatic potential $\phi_v$. To this end, we recall from \eqref{deftrafoT} that
\begin{align*}
 \psi_v(z,r)=\phi_v\big (T_v(z,r)\big)= \phi_v \Big( z , \, \frac{r-2v(z)}{1-v(z)} \Big ) \,, \qquad (z,r) \in \Omega(v)\,,
\end{align*}
and consequently 
\begin{align*}
 \partial_r \psi_v \big (z,v(z)+1 \big) = \frac{\partial_r \phi_v(z,1)}{1-v(z)}\,, \qquad z \in (-1,1)\,.
\end{align*}
This yields
\begin{align}
 g(v) =   \big (1+\sigma^2 (\partial_z v )^2 \big)^{3/2} \, \frac{\vert \partial_r \phi_v(\, \cdot \,,1) \vert^2}{(1-v)^2} \,, \qquad v \in S(\kappa)\,. \label{g1}
\end{align}
Moreover, as the second preliminary observation, we note that
\begin{align}
 \big \Vert \partial_r \phi_v  ( \, \cdot \,, 1 ) \big  \Vert_{W^{1/2}_2(-1,1)} \leq C( \kappa)\,, \qquad v \in S(\kappa)\,. \label{LipschitzRHS.eq2}
 \end{align}
Indeed, since $\phi_v$ belongs to $W_2^2(\Omega)$, the trace theorem \cite[Theorem 1.5.1.2]{Grisvard85} yields 
$$\Vert \partial_r \phi_v( \,\cdot \,, 1) \Vert_{W_2^{1/2}(-1,1)} \leq C\, \Vert \phi_v \Vert_{W_2^2(\Omega)}\,, \qquad v \in S(\kappa)\,, $$
for some constant $C > 0$ independent of $v $. In combination with the fact that $\phi_v$ is uniformly bounded on $S(\kappa)$ due to \eqref{Estimatephiv}, estimate \eqref{LipschitzRHS.eq2} then follows.\\
{\bf(ii)} We deduce from the representation of $g$ in \eqref{g1} and
\begin{align*}
W^1_q(-1,1) \cdot W^{1/2}_2(-1,1) \cdot W^{1/2}_2(-1,1) \hookrightarrow W^\nu_2(-1,1)\,,
\end{align*}
due to the Multiplication Theorem \ref{multthm}, that 
\begin{align*}
\Vert g(v) \Vert_{W^\nu_{2,D}} &\leq C \, \left \Vert \frac{\big (1+\sigma^2 (\partial_z v)^2 \big)^{3/2}}{(1-v)^2} \right \Vert_{W^1_q(-1,1)} \, \big \Vert \partial_r \phi_v(\,\cdot \,, 1) \big \Vert_{W_2^{1/2}(-1,1)}^2 \\
&\leq C(\kappa) 
\end{align*}
for $v \in S(\kappa)$. Here, the last inequality follows from \eqref{LipschitzRHS.eq2}. Consequently, $g$ maps $S(\kappa)$ to $W^\nu_{2,D}(-1,1)$ and is bounded.
\\{\bf(iii)} We present the main part of the proof. Namely, we derive the stated Lipschitz continuity of $g$ based on \eqref{g1}. To this end, we write
\begin{align*}
 \Vert &g(v) - g(w) \Vert_{W^{\nu}_{2,D}(-1,1)} \\
 &\leq  \left \Vert \frac{(1+\sigma^2 w_z^2)^{3/2}}{(1-w)^2}\, \Big ( \vert \partial_r \phi_v(\,\cdot\,,1)\vert^2-\vert\partial_r \phi_w(\,\cdot\,,1)\vert^2 \Big)\right \Vert_{W_{2,D}^\nu(-1,1)} \\
 &\ \ \ + \left \Vert  (1+\sigma^2w_z^2)^{3/2} \,\left ( \frac{1}{(1-w)^2} -\frac{1}{(1-v)^2}\right)\, \vert \partial_r \phi_v(\,\cdot \,,1)\vert^2 \right \Vert_{W^\nu_{2,D}(-1,1)} \\
 & \ \ \ + \left \Vert \Big ((1+\sigma^2v_z^2)^{3/2}-(1+\sigma^2w_z^2)^{3/2} \Big) \, \frac{1}{(1-v)^2}\, \vert \partial_r \phi_v(\, \cdot \,, 1) \vert^2   \right \Vert_{W^\nu_{2,D}(-1,1)}\\
 &=:I+II+III\,,
\end{align*}
and estimate each part separately:\\
{\it For $I$:} We let $\alpha \in (\xi , 1/2-\nu)$, and write
\begin{align*}
I=\left \Vert \frac{(1+\sigma^2 w_z^2)^{3/2}}{(1-w)^2}\, \Big (  \partial_r \phi_v(\,\cdot\,,1)+\partial_r \phi_w(\,\cdot\,,1) \Big)\Big (  \partial_r \phi_v(\,\cdot\,,1)-\partial_r \phi_w(\,\cdot\,,1) \Big)\right \Vert_{W_{2,D}^\nu(-1,1)}\,.
\end{align*}
From
\begin{align*}
 W^1_q(-1,1)\cdot W^{1/2}_2(-1,1) \cdot W_2^{1/2-\alpha} (-1,1) \hookrightarrow W^\nu_2(-1,1),
\end{align*}
which holds thanks to the Multiplication Theorem \ref{multthm}, we deduce that
\begin{align*}
 I &\leq \left \Vert \frac{(1+\sigma^2 w_z^2)^{3/2}}{(1-w)^2}\right \Vert_{W^1_q(-1,1)}\, \big \Vert \partial_r \phi_v(\,\cdot \,, 1) + \partial_r \phi_w(\, \cdot \,,1) \big \Vert_{W^{1/2}_2(-1,1)} \\ 
&\hspace{4.5cm} \times\big \Vert \partial_r \phi_v(\,\cdot \,,1) - \partial_r \phi_w(\,\cdot\,,1) \big \Vert_{W^{1/2-\alpha}_2(-1,1)}
 \\
 &\leq C( \kappa) \, \big \Vert \partial_r \phi_v(\,\cdot \,,1) - \partial_r \phi_w(\,\cdot\,,1) \big \Vert_{W^{1/2-\alpha}_2(-1,1)}\\
  &\leq C( \kappa)\,  \Vert \partial_r \phi_v  -\partial_r \phi_w \Vert_{W^{1-\alpha}_2(\Omega)}\,,\qquad v,w \in S(\kappa)\,.
\end{align*}
In addition to the Multiplication Theorem, we applied \eqref{LipschitzRHS.eq2}, the fact that $W^1_q(-1,1)$ is a Banach algebra and the chain rule to derive the second estimate, while the third estimate follows from properties of the trace, see \cite[Theorem 1.5.1.2]{Grisvard85}.
Now using continuity of differentiation between fractional Sobolev spaces due to \cite[Theorem 1.4.4.6]{Grisvard85} (which is applicable as $1-\alpha \neq 1/2$) and subsequently Lemma \ref{EstimateSolution}, we conclude that
\begin{align*}
I&\leq C( \kappa)\ \Vert \phi_v  - \phi_w \Vert_{W^{2-\alpha}_2(\Omega)} \\
&\leq C( \kappa)\Vert v-w \Vert_{W_q^{2-\xi}(-1,1)} \,, \qquad v,w \in S(\kappa)\,.
\end{align*}
For $II$: We estimate
\begin{align*}
II &\leq \big \Vert (1+\sigma^2w_z^2)^{3/2} \big \Vert_{W^1_q(-1,1)} \,\left \Vert \frac{1}{(1-w)^2} - \frac{1}{(1-v)^2} \right \Vert_{W^1_q(-1,1)} \, \Vert \partial_r \phi_v(\,\cdot\,,1) \Vert^2_{W^{1/2}_2(-1,1)} \\
&\leq C( \kappa)\,\left \Vert \frac{1}{(1-w)^2} - \frac{1}{(1-v)^2}\right \Vert_{W^1_q(-1,1)} \,, \qquad v,w \in S(\kappa)\,,
\end{align*}
where we use that $W^1_q(-1,1)$ is a Banach algebra and
\begin{align*}
W^1_q(-1,1) \cdot W^{1/2}_2(-1,1) \cdot W^{1/2}_2(-1,1) \hookrightarrow W^\nu_2(-1,1)\,,
\end{align*}
thanks to the Multiplication Theorem \ref{multthm}. 
Writing 
\begin{align*}
\frac{1}{(1-w)^2} - \frac{1}{(1-v)^2} = \frac{2-w-v}{(1-w)^2\,(1-v)^2}\,(w-v) \,,
\end{align*}
and using once more that $W^1_q(-1,1)$ is an algebra, we deduce further that
\begin{align*}
II &\leq C( \kappa) \, \Vert v-w \Vert_{W^1_q(-1,1)} \\
&\leq C( \kappa)\,\Vert v-w \Vert_{W_q^{2-\xi}(-1,1)} \,, \qquad v,w \in S(\kappa).
\end{align*}
For $III$: We rewrite 
\begin{align*}
(1+\sigma^2v_z^2)^{3/2}-(1+\sigma^2w_z^2)^{3/2} &= (1+\sigma^2v_z^2)^{1/2} \left (  (1+\sigma^2v_z^2)-(1+\sigma^2 w_z^2) \right) \\
&\quad \ \ + (1+\sigma^2 w_z^2) \left ((1+\sigma^2 v_z^2)^{1/2} -(1+\sigma^2 w_z^2 )^{1/2} \right )  \\
&=  r(v,w)\,(v_z+w_z)(v_z-w_z)
\end{align*}
with
\begin{align*}
r(v,w):= \sigma^2 \bigg ((1+\sigma^2 v_z^2)^{1/2} + \frac{(1+\sigma^2 w_z^2)}{\sqrt{1+\sigma^2 v_z^2} +\sqrt{1+\sigma^2 w_z^2}} \bigg ) \in W^1_q(-1,1)\,.
\end{align*}
Then, we estimate $III$ by 
\begin{align*}
III&= \left\Vert  \frac{r(v,w)(v_z+w_z)}{(1-v)^2}\,(v_z-w_z)\, \vert \partial_r \phi_v(\, \cdot \,, 1) \vert^2   \right \Vert_{W^\nu_{2,D}(-1,1)} \\
&\leq C( \kappa) \Vert v_z-w_z \Vert_{W^{1-\xi}_q(-1,1)} \\
&\leq C( \kappa) \Vert v-w \Vert_{W^{2-\xi}_{q}(-1,1)}
\end{align*}
using 
\begin{align*}
W^1_q(-1,1) \cdot W^{1-\xi}_q(-1,1)  \cdot W^{1/2}_2(-1,1) \cdot W^{1/2}_2(-1,1) \hookrightarrow W^\nu_{2,D}(-1,1)
\end{align*}
due to the Multiplication Theorem \ref{multthm}. .
Combining the estimates for $I$-$III$ yields \eqref{eqEstimateRHS}. \\
{\bf(iv)} The second estimate \eqref{eqEstimateRHSb} follows directly: 
\begin{align*}
 \left \Vert \frac{1}{v+1} - \frac{1}{w+1} \right \Vert_{W^\nu_{2,D}(-1,1)} &\leq C \, \left \Vert \frac{w-v}{(v+1)(w+1)} \right \Vert_{W^1_q(-1,1)} \\
 &\leq C(\kappa) \Vert w-v \Vert_{W^{2-\xi}_{q,D}(-1,1)} \,, \qquad v,w \in S(\kappa)\,.
\end{align*}
\end{proof}

Thanks to Sobolev's embedding theorem, we have the following $L_q$-$L_q$-version of Proposition \ref{LipschitzRHS}: 

\begin{cor}\label{LipschitzRHSb}
 Let $q \in (2,\infty)$, $\kappa \in (0,1)$ and $\lambda,\sigma > 0$. For $\xi \in [0,1/q)$ and $2\mu \in [0, 1/q-\xi)$, there exists a constant $C_{7}( \kappa) >0$ such that the map
$$[v \mapsto g(v)]\,: \quad S(\kappa)\rightarrow W^{2\mu}_{q,D}(-1,1)$$
is bounded by $C_7(\kappa)$ and
\begin{align*}
\Vert g (v) &- g(w) \Vert_{L_{q}(-1,1)} \leq  \, \frac{C_7( \kappa)}{2\lambda} \,\Vert v-w \Vert_{W_{q,D}^{2-\xi}(-1,1)} \,,
\end{align*}
as well as
\begin{align*}
 \bigg \Vert \frac{1}{v+1} -\frac{1}{w+1} \bigg \Vert_{L_q(-1,1)} \leq  \frac{C_7( \kappa)}{2}\,\Vert v-w \Vert_{W_{q,D}^{2-\xi}(-1,1)} \,, \qquad v,w \in S(\kappa)\,. 
\end{align*}
\end{cor}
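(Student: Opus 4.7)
The plan is to deduce this corollary from Proposition \ref{LipschitzRHS} by composing the estimates stated there with an appropriate Sobolev embedding on the one-dimensional interval $(-1,1)$. The key observation is that, in one space dimension, one has the continuous embedding
\begin{align*}
W^\nu_{2,D}(-1,1) \hookrightarrow W^{s}_{q,D}(-1,1) \qquad \text{whenever} \qquad \nu - \tfrac{1}{2} \geq s - \tfrac{1}{q},
\end{align*}
so in particular $W^\nu_{2,D}(-1,1) \hookrightarrow L_q(-1,1)$ for $\nu \geq 1/2 - 1/q$ and $W^\nu_{2,D}(-1,1) \hookrightarrow W^{2\mu}_{q,D}(-1,1)$ for $\nu \geq 2\mu + 1/2 - 1/q$.

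First, for the boundedness statement, given $\xi \in [0, 1/q)$ and $2\mu \in [0, 1/q - \xi)$, I would pick any $\nu$ with
\begin{align*}
2\mu + \tfrac{1}{2} - \tfrac{1}{q} \leq \nu < \tfrac{1}{2} - \xi;
\end{align*}
such $\nu$ exists precisely because $2\mu < 1/q - \xi$. Since $\xi < 1/q < 1/2$ and $\nu < 1/2 - \xi < 1/2$, the parameters are admissible in Proposition \ref{LipschitzRHS}. Composing the boundedness of $[v \mapsto g(v)]: S(\kappa) \to W^\nu_{2,D}(-1,1)$ with the embedding $W^\nu_{2,D}(-1,1) \hookrightarrow W^{2\mu}_{q,D}(-1,1)$ then yields boundedness of $[v \mapsto g(v)]: S(\kappa) \to W^{2\mu}_{q,D}(-1,1)$ with a constant depending only on $\kappa$.

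Next, for the Lipschitz estimates in $L_q(-1,1)$, I would choose any $\nu \in [1/2 - 1/q, 1/2 - \xi)$, which is a non-empty interval exactly because $\xi < 1/q$. Applying the embedding $W^\nu_{2,D}(-1,1) \hookrightarrow L_q(-1,1)$ to the bound \eqref{eqEstimateRHS} from Proposition \ref{LipschitzRHS} gives
\begin{align*}
\Vert g(v) - g(w) \Vert_{L_q(-1,1)} \leq C\, \Vert g(v) - g(w) \Vert_{W^\nu_{2,D}(-1,1)} \leq C\, C_6(\kappa)\, \Vert v - w \Vert_{W^{2-\xi}_{q,D}(-1,1)},
\end{align*}
for all $v,w \in S(\kappa)$. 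Similarly, applying the embedding to \eqref{eqEstimateRHSb} gives the corresponding bound for $1/(v+1) - 1/(w+1)$. Finally, I would define $C_7(\kappa)$ large enough that it simultaneously dominates $2\lambda$ times the first constant, $2$ times the second, and also the $W^{2\mu}_q$-bound from the first step; this absorbs the cosmetic factors $2\lambda$ and $2$ appearing in the statement.

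There is no genuine obstacle here: all work has been done in Proposition \ref{LipschitzRHS}, and the corollary is a straightforward application of Sobolev embedding in one dimension. The only point requiring care is book-keeping of the parameter ranges — verifying that the conditions $\xi \in [0, 1/q)$ and $2\mu \in [0, 1/q - \xi)$ are precisely what is needed to select an admissible $\nu \in [0, 1/2 - \xi)$ that is large enough to embed into the target spaces.
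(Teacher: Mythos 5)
Your argument is correct and is essentially the paper's own proof: in both cases one fixes $\nu$ in the interval $\bigl(2\mu+\tfrac12-\tfrac1q,\ \tfrac12-\xi\bigr)$ (non-empty precisely because $2\mu<1/q-\xi$), notes that this $\nu$ is admissible in Proposition \ref{LipschitzRHS}, and composes with the one-dimensional Sobolev embedding $W^\nu_{2,D}(-1,1)\hookrightarrow W^{2\mu}_{q,D}(-1,1)\hookrightarrow L_q(-1,1)$. The only cosmetic difference is that you allow yourself separate choices of $\nu$ for the boundedness and Lipschitz parts, whereas the paper uses a single $\nu$ that serves both; this changes nothing.
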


\begin{proof}
Since
\begin{align*}
 2 \mu + 1/2-1/q < 1/q - \xi +1/2 -1/q=1/2-\xi\,,
\end{align*}
we can fix $\nu \in \big ( 2 \mu + 1/2-1/q\,, \,1/2-\xi\big )$. While Sobolev's embedding theorem ensures that
\begin{align*}
 W^\nu_{2,D}(-1,1) \hookrightarrow W^{2\mu}_{q,D}(-1,1)\,,
\end{align*}
the choice of $\xi$ and $\nu$ is compatible with Proposition \ref{LipschitzRHS}.
\end{proof}

Note that Proposition \ref{LipschitzRHS} and the corresponding Corollary \ref{LipschitzRHSb} establish Lipschitz continuity of $g$ with respect to a weaker norm than the $\Vert \cdot \Vert_{W^2_{q,D}(-1,1)}$-norm, which will be essential to prove local existence in the quasilinear setting.

\section{Coupled System}\label{21123}
We turn to the proof of the first two main results, Theorem \ref{localexistence} and Corollary \ref{globex}, i.e. well-posedness as well as the global existence criterion for the coupled free boundary problem \eqref{filmdimensionless2}-\eqref{RBPsi42}. As already mentioned, we reinterpret the system \eqref{filmdimensionless2}-\eqref{RBPsi42} as the following single quasilinear parabolic equation for the film deflection 
\begin{align}
\partial_t u - \sigma \partial_z \mathrm{arctan}(\sigma \partial_z u)=G(u) \label{311023}
\end{align}
with Lipschitz continuous and non-local right-hand side $[u \mapsto G(u)]$ given by 
\begin{align}
G(u):= -\frac{1}{u+1}+\lambda g(u)\,. \label{Gdefinition} 
\end{align} 
Our proof follows \cite{ELW15}.
\\

Before we start, we introduce some notations: Let $q \in (2, \infty)$ and $\xi \in (0, 1/q^\prime)$ where $q^\prime$ denotes the dual exponent of $q$. For $\kappa \in (0,1)$, we put 
\begin{align*}
 Z(\kappa):= \big \lbrace v \in W_q^{2-\xi} (-1,1) \, &\big \vert \, \Vert v \Vert_{W_q^{2-\xi}(-1,1)} \leq 1/\kappa \,, \ -1+\kappa \leq v(z) \leq 1-\kappa \, \big \rbrace 
\end{align*}
and define
\begin{align*}
 B(v)w := - \frac{\sigma^2}{(1+ \sigma^2 v_z^2)} w_{zz} \,, 
\qquad w \in W^2_{q,D}(-1,1)\,,
\end{align*}
for $v \in Z(\kappa)$, where the connection between $[v \mapsto B(v)]$ and \eqref{311023} is given via
$$B(u)u= -\frac{\sigma^2 u_{zz}}{(1+\sigma^2 u_z^2)} = - \sigma \partial_z \arctan(\sigma \partial_z u)\,, \qquad u \in W^2_{q,D}(-1,1)\footnote{It is clear that $B(u)$ is also defined in this case.}\,,$$
which is the second order operator occuring on the left-hand side of \eqref{311023}.
Furthermore, the choice of $\xi$ and Sobolev's embedding theorem ensure that $-\displaystyle\frac{\sigma^2}{(1+ \sigma^2 v_z^2)}\in C\big ([-1,1]\big )$ so that each $B(v)$ is uniformly elliptic. \\
 
In the next two lemmata, we establish properties of $[v \mapsto B(v)]$. More precisely, we show that $[v \mapsto B(v)]$ is globally Lipschitz continuous on $Z(\kappa)$ and that each $-B(v)$ generates an analytic semigroup satisfying uniform estimates for $v\in Z(\kappa)$.\\

\begin{lem}\label{GeneratorSemigroup1}
Let $q \in (2, \infty)$, $\kappa \in (0,1)$ and $\xi \in (0,1/q^\prime)$. 
Then, there exists a constant $l( \kappa)$ such that 
\begin{align*}
 \Vert B(w)-B(v) \Vert_{\mathcal{L}   (W^2_{q,D}(-1,1),L_q(-1,1) )} \leq l(\kappa) \Vert w-v \Vert_{W_{q,D}^{2 - \xi} (-1,1)} 
\end{align*}
for $v,w\in Z(\kappa)$.
\end{lem}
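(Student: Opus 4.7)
\bigskip

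\textbf{Proof plan.} The approach is a direct computation: I would write out the pointwise difference $[B(w)-B(v)]\phi$ for $\phi \in W^2_{q,D}(-1,1)$, bound its $L_q$-norm, and then take the supremum over unit vectors. Concretely, a short algebraic manipulation gives
\begin{align*}
[B(w)-B(v)]\phi \;=\; -\sigma^{4}\,\frac{(v_z-w_z)(v_z+w_z)}{(1+\sigma^2 v_z^2)(1+\sigma^2 w_z^2)}\,\phi_{zz}.
\end{align*}
Since the denominator is bounded below by $1$, the $L_q$-norm of the left-hand side is controlled by the $L_\infty$-norm of the coefficient times $\|\phi_{zz}\|_{L_q}$, i.e.\ by $\|\phi\|_{W^2_{q,D}(-1,1)}$.

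Next, I would observe that on $Z(\kappa)$ the derivatives are uniformly bounded in $L_\infty$. Indeed, the choice $\xi\in(0,1/q^\prime)$ gives $1-\xi > 1/q$, so Sobolev's embedding yields
\begin{align*}
W^{1-\xi}_q(-1,1)\;\hookrightarrow\; C([-1,1]),
\end{align*}
and consequently, for $v,w\in Z(\kappa)$, the quantities $\|v_z\|_{L_\infty}$ and $\|w_z\|_{L_\infty}$ are bounded by $C/\kappa$ for some absolute constant $C$. This controls the factor $v_z+w_z$ in $L_\infty$ uniformly on $Z(\kappa)$.

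The decisive step is the estimate of $v_z-w_z$ in $L_\infty$. Using the same embedding,
\begin{align*}
\|v_z-w_z\|_{L_\infty(-1,1)} \;\leq\; C\,\|v_z-w_z\|_{W^{1-\xi}_q(-1,1)} \;\leq\; C\,\|v-w\|_{W^{2-\xi}_q(-1,1)},
\end{align*}
where the second inequality uses continuity of the derivative $\partial_z:W^{2-\xi}_q(-1,1)\to W^{1-\xi}_q(-1,1)$. Putting the three bounds together produces the desired estimate
\begin{align*}
\|[B(w)-B(v)]\phi\|_{L_q(-1,1)} \;\leq\; l(\kappa)\,\|v-w\|_{W^{2-\xi}_{q,D}(-1,1)}\,\|\phi\|_{W^2_{q,D}(-1,1)}
\end{align*}
with $l(\kappa)$ depending only on $\kappa$ and $\sigma$, and taking the supremum over $\phi$ with $\|\phi\|_{W^2_{q,D}}\le 1$ finishes the proof. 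I do not expect any genuine obstacle here; the only subtle point is to verify that the constraint $\xi<1/q^\prime$ is exactly what is needed to push $v_z-w_z$ into $L_\infty$ via the Sobolev embedding, which is what allows the estimate to be uniform on $Z(\kappa)$.
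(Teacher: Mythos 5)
Your proof is correct and follows essentially the same route as the paper: factor the difference of the coefficients as $\sigma^4(v_z-w_z)(v_z+w_z)/[(1+\sigma^2 v_z^2)(1+\sigma^2 w_z^2)]$, bound it in $L_\infty$ using that $Z(\kappa)$ embeds boundedly into $C^1([-1,1])$ via the Sobolev embedding $W^{1-\xi}_q(-1,1)\hookrightarrow C([-1,1])$ (valid precisely because $\xi<1/q'$ gives $1-\xi>1/q$), and conclude. Your write-up is merely a bit more explicit about where the constraint on $\xi$ enters.
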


\begin{proof}
The statement follows from
 \begin{align*}
  \Vert B(w) &- B(v) \Vert_{\mathcal{L} (W^2_{q,D}(-1,1),L_q(-1,1))}\\
  &\leq \sigma^2\,\left \Vert \frac{1}{(1+\sigma^2w_z^2)}  - \frac{1}{(1+ \sigma^2 
v_z^2)} \right \Vert_\infty \\
  &\leq \sigma^4 \left \Vert \frac{1}{(1+\sigma^2 w_z^2)(1+\sigma^2 v_z^2)} \right \Vert_\infty \, \Vert w_z+v_z \Vert_\infty\, \Vert w_z-v_z \Vert_\infty\\
 &\leq l( \kappa) \Vert w-v \Vert_{W^{2-\xi}_{q,D}(-1,1)} \,,
 \end{align*}
 where we made use of $Z(\kappa)$ being continuously embedded and bounded in $C^1 \big([-1,1] \big )$ due to Sobolev's embedding theorem.
\end{proof}

\begin{lem}\label{GeneratorSemigroup2}
Let $q \in (2, \infty)$, $\kappa \in (0,1)$ and $\xi \in (0,1/q^\prime)$. Moreover, let $\omega >0$ be fixed. 
Then, there is a constant $k:=k(  \kappa) \geq 1$ such that for each $v \in Z(\kappa)$ one has
\begin{align*} B(v) \in \mathcal{H}\big(W^2_{q,D}(-1,1), L_q(-1,1), k, \omega \big )\,.\end{align*}
\end{lem}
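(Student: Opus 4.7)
For fixed $v \in Z(\kappa)$, the operator $B(v)$ is a scalar second-order operator $B(v)w = -a_v(z)\,\partial_z^2 w$ with coefficient $a_v(z) := \sigma^2/(1+\sigma^2 v_z(z)^2)$ and Dirichlet boundary conditions. The plan is to combine the standard theory of analytic semigroups generated by elliptic second-order operators with uniform control of $a_v$ over $Z(\kappa)$. The non-trivial content is that the constant $k$ appearing in the resolvent estimate can be chosen to depend only on $\kappa$ (and $\omega$, $\sigma$, $q$), not on the particular $v \in Z(\kappa)$.

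The first step is to collect uniform bounds on $a_v$. Because $\xi < 1/q'$, one has $2-\xi - 1/q > 1$, so the Sobolev embedding $W^{2-\xi}_q(-1,1) \hookrightarrow C^1([-1,1])$ combined with $\|v\|_{W^{2-\xi}_q} \leq 1/\kappa$ yields a uniform bound $\|v_z\|_\infty \leq M(\kappa)$. Consequently,
$$\frac{\sigma^2}{1+\sigma^2 M(\kappa)^2} \,\leq\, a_v(z) \,\leq\, \sigma^2, \qquad z \in [-1,1], \quad v \in Z(\kappa),$$
showing uniform ellipticity of $B(v)$ with constants depending only on $\kappa$ and $\sigma$. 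In fact the stronger embedding $W^{2-\xi}_q(-1,1) \hookrightarrow C^{1,\alpha}([-1,1])$ (for some $\alpha > 0$ provided by $2-\xi-1/q > 1$) yields a uniform Hölder bound on $v_z$, hence a uniform modulus of continuity for $a_v$ over $v \in Z(\kappa)$.

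The second step is to invoke the standard resolvent theory for second-order elliptic operators with continuous coefficients and Dirichlet boundary conditions (in the form developed in \cite{AmannLQPP}, on which the definition of the class $\mathcal{H}(\cdot,\cdot,k,\omega)$ is modeled). Applied to $B(v)$, this theory supplies an $\omega_0 = \omega_0(\kappa,\sigma)$ such that for every $\omega \geq \omega_0$ the shifted operator $\omega + B(v)$ is an isomorphism from $W^2_{q,D}(-1,1)$ onto $L_q(-1,1)$, together with the resolvent estimate
$$|\mu|\,\|w\|_{L_q(-1,1)} + \|w\|_{W^2_{q,D}(-1,1)} \,\leq\, k\,\|(\mu + B(v))w\|_{L_q(-1,1)}, \qquad \mathrm{Re}\,\mu \geq \omega,$$
with $k$ depending only on $\omega$, the ellipticity constants, and the modulus of continuity of $a_v$. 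By the first step these quantities are controlled uniformly on $Z(\kappa)$, hence $k = k(\kappa)$. A generic $\omega > 0$ is handled by absorbing the difference into $k$ via the trivial bound $|\mu|\|w\|_{L_q} \leq (|\mu|+\omega_0)\|w\|_{L_q}$, possibly enlarging $k(\kappa)$. The converse upper bound in the definition of $\mathcal{H}(W^2_{q,D},L_q,k,\omega)$ is immediate from $\|(\mu+B(v))w\|_{L_q} \leq |\mu|\|w\|_{L_q} + \sigma^2 \|w_{zz}\|_{L_q}$ and the definition of the $W^2_{q,D}$-norm.

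The main obstacle is the uniformity of $k$ in $v$: existence of resolvent estimates pointwise for each $v$ is classical, but tracking the dependence on the coefficient requires a version of the theory whose constants are explicit functions of the ellipticity bounds and continuity moduli, which is precisely what is provided by the framework in \cite{AmannLQPP}. Once the uniform coefficient bounds from the first step are in place, the conclusion $B(v) \in \mathcal{H}(W^2_{q,D}(-1,1), L_q(-1,1), k(\kappa), \omega)$ follows for every $v \in Z(\kappa)$.
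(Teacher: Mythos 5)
Your overall strategy (uniform coefficient bounds from the Sobolev embedding of $Z(\kappa)$ plus a quantitative resolvent estimate with constants depending only on the ellipticity data) matches the spirit of the paper, which verifies the three-part criterion of \cite[Remark I.1.2.1\,(a)]{AmannLQPP}. Your observation that $a_v$ is uniformly elliptic with bounds depending only on $\kappa,\sigma$, and that the upper bound in the definition of $\mathcal{H}(W^2_{q,D},L_q,k,\omega)$ is immediate from $\Vert(\mu+B(v))w\Vert_{L_q}\le |\mu|\Vert w\Vert_{L_q}+\sigma^2\Vert w_{zz}\Vert_{L_q}$, is exactly what the paper does.

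The gap is in the step where you go from a large $\omega_0(\kappa,\sigma)$ to the arbitrary $\omega>0$ fixed in the statement. The bound $|\mu|\Vert w\Vert_{L_q}\le(|\mu|+\omega_0)\Vert w\Vert_{L_q}$ only rescales the left-hand side of a resolvent estimate you already have; it does not tell you that $\mu+B(v)$ is invertible for $\omega\le\operatorname{Re}\mu<\omega_0$, nor does it produce the lower bound in the definition of $\mathcal{H}(\cdot,\cdot,k,\omega)$ on that strip. What is actually needed is the spectral fact that $\sigma(-B(v))\subset(-\infty,0)$, and the paper establishes this directly: compactness of $B(v)^{-1}$ shows the spectrum consists of eigenvalues, and testing $\mu\varphi-(1/V)\partial_z^2\varphi=0$ against $V\overline{\varphi}$ gives $\mu<0$. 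Similarly, the paper does not invoke a black-box statement that the resolvent constant depends only on the ellipticity bounds and the modulus of continuity; it derives the estimate by hand by testing $[\mu+B(v)]u=f$ against $V|u|^{q-2}\overline{u}$ following \cite[Proposition 2.4.2]{LLMP04}, and then gets condition (iii) via Ehrling's lemma. Your proposal would be complete if you replaced the absorption argument with the eigenvalue computation (or some other spectral localization) and made the reference to a specific quantitative resolvent theorem, but as written the passage to generic $\omega>0$ does not close.
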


\begin{proof}
In \cite[Remark I.1.2.1\,(a)]{AmannLQPP} a criterion for $B(v)$ to belong to one of the quantitative versions of $\mathcal{H}\big (W^2_{q,D}(-1,1),L_q(-1,1) \big )$, introduced in \eqref{QSE}, is given which reads as follows:\\

Assume that there are constants $C_i( \kappa)>0$ for $i=8,9$ such that for all $v \in Z(\kappa)$ one has: 
\begin{itemize}
\item[{\bf(i)}] $\big \Vert B(v) \big \Vert_{\mathcal{L}(W^2_{q,D}(-1,1),L_q(-1,1))} \leq C_8( \kappa)$\,,
\item[{\bf(ii)}] $[\mathrm{Re} \mu \geq \omega] \in \rho(-B(v))$ and
\begin{align*}
 \big \Vert [\mu+B(v)]^{-1} \big \Vert_{\mathcal{L}(L_q(-1,1))} \leq \frac{C_8( \kappa)}{\vert \mu \vert} \,, \qquad \mathrm{Re} \mu \geq \omega \,,
\end{align*}
\item[{\bf(iii)}] $\big \Vert [\omega +B(v)]^{-1} \big \Vert_{\mathcal{L}(L_q(-1,1),W_{q,D}^2(-1,1))} \leq C_9( \kappa)$.
\end{itemize}
Then, the assertion of the lemma follows from \cite[Remark I.1.2.1\,(a)]{AmannLQPP}. \\

Thus, we only have to check (i)--(iii). We first define $V:= (1+ \sigma^2 v_z^2)/\sigma^2$ so that $B(v)w= - 1/V\, w_{zz}$. Then, 
\begin{align}1/\sigma^2 \leq V\leq C_{10}( \kappa)\,, \qquad v \in Z(\kappa)\,,\label{vvv} \end{align}
and the differential operator $B(v)$ satisfies 
\begin{align*}
\big \Vert B(v) \big \Vert_{\mathcal{L}(W^2_{q,D}(-1,1), L_q(-1,1))} \leq \sigma^2 \,, \qquad  v\in Z(\kappa)\,,
\end{align*}
which is condition (i). \\

Next, we check condition (ii).
To this end, we note that $B(v)$ is uniformly elliptic. Consequently, for $f \in L_q(-1,1)$, the equation
\begin{align*}
 \begin{cases}
B(v) u &= f\,, \\
u(\pm1)&=0
 \end{cases} 
\end{align*}
is uniquely solvable in $W^2_{q,D}(-1,1)$ with $B(v)^{-1} \in \mathcal{L} \big(L_q(-1,1), W^2_{q,D}(-1,1)\big )$ due to \cite[Theorem 9.15, Lemma 9.17]{GT98}.
It follows from the Theorem of Rellich-Kondrachov that $B(v)^{-1} \in 
\mathcal{L} \big (L_q(-1,1) \big )$ is compact, and \cite[Theorem 6.29]{Kato95} implies that the spectrum $\sigma(-B(v))$ consists only of eigenvalues.
Now we fix an eigenvalue $\mu$ of $-B(v)$ and a corresponding eigenfunction $\varphi \in W^2_{q,D}\big ((-1,1),\mathbb{C} \big)$. Testing
$$\mu \varphi - \frac{1}{V} \partial_z^2 \varphi =0$$
with $V \,\overline{\varphi} \in W^2_{q^\prime,D}\big ((-1,1),\mathbb{C} \big )$ and using integration by parts yields 
\begin{align*}
 \mu = \frac{-\int_{-1}^1 \vert \partial_z \varphi \vert^2 \, \mathrm{d}z}{\int_{-1}^1 V\, \vert \varphi \vert^2 \, \mathrm{d} z} <0
\end{align*}
so that
\begin{align*}
 \big [ \mathrm{Re}\mu > 0 ] \subset \rho (-B(v)) \,, \qquad v \in Z(\kappa) \,.
\end{align*}
Next, let $u \in W_{q,D}^2(-1,1)$ be the unique solution to
\begin{align*}
 [\mu +B(v) ] u = f \,, \qquad f \in L_q\big((-1,1),\mathbb{C} \big)\,,
\end{align*}
for $\mu >0$. Testing this equation with $V\, \vert u \vert^{q-2} \overline{u} \in L_{q^\prime}\big((-1,1), \mathbb{C} \big)$ yields -- along the lines of the proof of \cite[Proposition 2.4.2]{LLMP04} -- the resolvent estimate (ii).\\

Finally, we turn to condition (iii). For $v \in Z(\kappa)$ and $u \in W^2_{q,D}(-1,1)$, we find 
\begin{align}
 \Vert u \Vert_{W^2_{q,D}(-1,1)}^q &\leq \Vert u \Vert_{W^1_q(-1,1)}^q + C_{10}( \kappa)^q \big \Vert B(v) u  \big \Vert_{L_q(-1,1)}^q \nonumber \\
 &\leq \frac{1}{2} \Vert u \Vert_{W^2_{q,D}(-1,1)}^q + C\, \Vert u \Vert_{L_q(-1,1)}^q + C_{10}( \kappa)^q \big \Vert [\omega+ B(v)] u  \big \Vert_{L_q(-1,1)}^q \label{3.12.neu}
\end{align}
thanks to \eqref{vvv}, the triangle inequality and Ehrling's lemma. Rearranging \eqref{3.12.neu}, we deduce from (ii) the existence of a constant $C_9( \kappa)>0$ with
\begin{align*}
 \Vert u \Vert_{W^2_{q,D}(-1,1)} \leq C_9( \kappa) \big \Vert [\omega +B(v)]u \big \Vert_{L_q(-1,1)} \,, \quad v \in Z(\kappa)\,, \quad u \in W^2_{q,D}(-1,1)\,,
\end{align*}
which is equivalent to condition (iii). Now everything is proven.

\end{proof}

 If $v$ now depends on $t$, then $-B(v)$ generates a parabolic evolution operator (instead of an analytic semigroup), which satisfies regularity estimates holding uniformly on $Z(\kappa)$. The corresponding result is \cite[Proposition 3.2]{ELW15}.

\begin{prop}\label{SemigroupEstimates}
 Let $q \in (2, \infty)$, $\kappa \in (0,1)$, $\rho \in (0,1)$ and $\xi \in (0, 1/q^\prime)$. For $\tau \in (0,1]$, we define
 \begin{align*}
  \mathcal{V}_\tau(\kappa):= \bigg \lbrace v :[0,\tau] &\rightarrow W^{2-\xi}_{q,D}(-1,1) \, \bigg \vert \, \\
  &\Vert v(t)-v(s) \Vert_{W^{2-\xi}_{q,D}(-1,1)} \leq \vert t-s \vert^\rho\,, \quad  v(t) \in Z(\kappa)\,,\quad s,t\in[0,\tau] \bigg \rbrace\,. 
 \end{align*}
Then, for each $v \in \mathcal{V}_\tau(\kappa)$, there exists a unique parabolic  
$$\big \lbrace U_{B(v)}(t,s) \,\big \vert \, 0 \leq s \leq t \leq \tau \, \big \rbrace$$
possessing $W^2_{q,D}(-1,1)$ as regularity subspace. Moreover, for fixed $2\mu \in (0, 1/q)$, there exists a constant 
$C_{11}(\kappa) \geq 1$ independent of $\tau$ and $v \in \mathcal{V}_\tau(\kappa)$ such that 
\begin{align*}
 \big \Vert U_{B(v)}(t,s) \big \Vert_{\mathcal{L}(W^{2}_{q,D}(-1,1))}+\big (t-s)^{1-\mu}\big \Vert U_{B(v)}(t,s) \Vert_{\mathcal{L}(W^{2\mu}_{q,D}(-1,1), W^{2}_{q,D}(-1,1))} \leq C_{11}(\kappa) 
\end{align*}
for $0 \leq s < t \leq \tau$.
\end{prop}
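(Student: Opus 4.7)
The plan is to invoke Amann's theory of parabolic evolution operators from \cite{AmannLQPP} directly, relying on the quantitative generation result of Lemma~\ref{GeneratorSemigroup2} together with the Lipschitz estimate of Lemma~\ref{GeneratorSemigroup1}. The novelty compared to a plain citation lies in tracking that all constants depend on $\kappa$ alone, which is exactly what the quantitative class $\mathcal{H}(\cdot,\cdot,k,\omega)$ was set up for.

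First I would check the hypotheses of the construction of a parabolic evolution operator for a Hölder-continuous family of generators. Fix $v \in \mathcal{V}_\tau(\kappa)$. The Lipschitz bound of Lemma~\ref{GeneratorSemigroup1} combined with the defining $\rho$-Hölder estimate $\Vert v(t)-v(s) \Vert_{W^{2-\xi}_{q,D}(-1,1)} \leq \vert t-s \vert^\rho$ gives
\[
\Vert B(v(t))-B(v(s)) \Vert_{\mathcal{L}(W^2_{q,D}(-1,1),\, L_q(-1,1))} \leq l(\kappa)\, \vert t-s \vert^\rho,
\]
so the $\rho$-Hölder seminorm of $t \mapsto B(v(t))$ is bounded by $l(\kappa)$, uniformly in $v \in \mathcal{V}_\tau(\kappa)$. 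In addition, Lemma~\ref{GeneratorSemigroup2} yields $B(v(t)) \in \mathcal{H}(W^2_{q,D}(-1,1), L_q(-1,1), k, \omega)$ with $k=k(\kappa)$ and $\omega>0$ independent of $t$ and of $v$. These are precisely the standing assumptions of Amann's construction in \cite[Chapter~II.4]{AmannLQPP} (compare \cite[Proposition~3.2]{ELW15}), which delivers the unique parabolic evolution operator $\{U_{B(v)}(t,s)\}_{0 \leq s \leq t \leq \tau}$ having $W^2_{q,D}(-1,1)$ as regularity subspace.

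Next I would read off the uniform $\mathcal{L}(W^2_{q,D}(-1,1))$-bound. In Amann's framework, $U_{B(v)}(t,s)$ is represented by a perturbation series around the frozen-coefficient analytic semigroups $e^{-(t-s)B(v(s))}$, with successive corrections built from the Hölder commutator $B(v(t))-B(v(s))$ and an iterated resolvent convolution. Each ingredient is controlled solely by the generation constants $(k,\omega)$ and by the Hölder seminorm $l(\kappa)$, so the series converges uniformly with a bound depending only on $\kappa$ and on $\tau \leq 1$, producing $\Vert U_{B(v)}(t,s) \Vert_{\mathcal{L}(W^2_{q,D}(-1,1))} \leq C_{11}(\kappa)$.

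Finally, for the smoothing estimate I would exploit that the restriction $2\mu<1/q$ forces $W^{2\mu}_{q,D}(-1,1)=W^{2\mu}_q(-1,1)$, which is (up to equivalent norms) a real interpolation space between $L_q(-1,1)$ and $W^2_{q,D}(-1,1)$. The standard smoothing bound for analytic semigroups generated by members of $\mathcal{H}(W^2_{q,D}(-1,1),L_q(-1,1),k,\omega)$ then gives, uniformly in $v$,
\[
\Vert e^{-(t-s)B(v(s))} \Vert_{\mathcal{L}(W^{2\mu}_{q,D}(-1,1),\, W^2_{q,D}(-1,1))} \leq C(\kappa)\,(t-s)^{\mu-1}, \qquad 0 \leq s<t \leq \tau.
\]
Propagating this through Amann's perturbation series (whose additional convolutions produce only integrable singularities and therefore preserve the $(t-s)^{\mu-1}$ behaviour) yields the second claimed bound. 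The main obstacle is not the existence of $U_{B(v)}$, which is a direct citation, but the bookkeeping: one has to unpack enough of \cite{AmannLQPP} to confirm that every constant entering the construction depends solely on $k(\kappa)$, $\omega$, $l(\kappa)$, $\rho$ and $\tau \leq 1$, hence ultimately on $\kappa$ alone, so that $C_{11}(\kappa)$ truly is independent of $\tau$ and $v$.
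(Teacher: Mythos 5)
Your proposal is correct and follows essentially the same route as the paper: combine Lemmata~\ref{GeneratorSemigroup1} and \ref{GeneratorSemigroup2} to show that $t\mapsto B(v(t))$ lies in a $\rho$-H\"older ball of $\mathcal{H}\big(W^2_{q,D}(-1,1),L_q(-1,1),k,\omega\big)$ with constants depending only on $\kappa$, then invoke Amann's uniform estimates for the resulting parabolic evolution operator together with the identification of $W^{2\mu}_{q,D}(-1,1)$ as a real interpolation space. The paper points to the more surgical references (condition II\,(5.0.1), Theorem II.5.1.1, Lemma II.5.1.3 of \cite{AmannLQPP} and \cite[Theorem 5.2]{Amann93}) where you sketch the perturbation-series mechanism, but the substance is the same.
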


\begin{proof}
Let $\omega >0$ and put
$$\mathcal{B}:= \Big \lbrace \big [t \mapsto B(v(t)) \big ] \, \Big \vert \, v \in \mathcal{V}_\tau(\kappa) \Big \rbrace.$$
From Lemma \ref{GeneratorSemigroup1} and Lemma \ref{GeneratorSemigroup2}, we deduce that 
$$\mathcal{B} \subset C^\rho \Big (\,[0,\tau] \,,\, \mathcal{H}\big(W^2_{q,D}(-1,1),L_q(-1,1), k, \omega \big) \Big)$$ 
is bounded, which implies that $\mathcal{B}$ satisfies condition \cite[Equation II\,(5.0.1)]{AmannLQPP}. Here, $k=k( \kappa)\geq 1$ is the same as in Lemma \ref{GeneratorSemigroup2}. Since condition \cite[Equation II\,(5.0.1)]{AmannLQPP} is satisfied,
we can use the uniform estimates for parabolic evolution operators from \cite[Section II.5]{AmannLQPP}.
More precisely, the statement follows from \cite[Theorem II.5.1.1, Lemma II.5.1.3]{AmannLQPP} and the identification of interpolation spaces as fractional Sobolev spaces with Dirichlet boundary conditions based on \cite[Theorem 5.2]{Amann93}.
The latter originates from \cite{Grisvard67,Seeley72}.
\end{proof}

\begin{bem}\label{wvor}
The above proof ensures that the uniform estimates from \cite[Section II.5]{AmannLQPP} hold true. Together with the regularity estimates for the non-local operator $[u \mapsto G(u)]$ defined in \eqref{Gdefinition}, see Corollary \ref{LipschitzRHSb}, they form the basis for the upcoming fixed point argument. \\
\end{bem}

We are now in a position to establish local well-posedness for the free boundary problem \eqref{filmdimensionless2}-\eqref{RBPsi42}:\\

{\bf Proof of  Theorem \ref{localexistence}.}
It suffices to show the existence of a unique local solution $u$ to \eqref{311023}, which may subsequently be extended to a unique maximal solution. We want to apply Banach's fixed point theorem:\\
{\bf(i)} {\it Choice of a complete metric space}: Fix $\kappa > 0$ with 
$$u_0 \in S(2 \kappa) \cap Z(2 \kappa)$$
as well as 
$$\xi \in (0,1/q) \,, \qquad \rho \in (0,\xi/4)\,, \qquad 2 \mu \in (0,1/q-\xi)\,, \qquad \tau \in (0,1]\,.$$
Here, we recall that $u_0 \in S(2 \kappa)$ is equivalent to 
 $$ \Vert u_0 \Vert_{W^2_{q,D}(-1,1)} \leq \frac{1}{2\kappa} \,, \qquad 1-2\kappa \geq u_0 \geq -1 +2 \kappa \,,$$
 while $u_0 \in Z(2 \kappa)$ is equivalent to 
 $$ \Vert u_0 \Vert_{W^{2-\xi}_{q,D}(-1,1)} \leq \frac{1}{2\kappa} \,, \qquad 1-2\kappa \geq u_0 \geq -1 +2 \kappa\,, $$
 where different norms are used due to the fact that the analysis of the right-hand side of \eqref{311023} requires control of the $W^2_q$-norm, while the arguments from \cite[Section II.5]{AmannLQPP} only apply for slightly weaker norms. Moreover, by Proposition \ref{SemigroupEstimates}, we find $C_{11}(\kappa) \geq 1$ independent of $\tau$ such that 
 \begin{align}
 \big \Vert U_{B(v)}(t,s) \big \Vert_{\mathcal{L}(W^{2}_{q,D}(-1,1))}+\big (t-s)^{1-\mu}\big \Vert U_{B(v)}(t,s) \Vert_{\mathcal{L}(W^{2\mu}_{q,D}(-1,1), W^{2}_{q,D}(-1,1))} \leq C_{11}(\kappa)   \label{PPPP}
\end{align}
for each $v \in \mathcal{V}_\tau(\kappa)$ and $0 \leq s < t \leq \tau$. Now, we put $\tilde{\kappa} := \displaystyle\frac{\kappa}{C_{11}(\kappa)} \leq \kappa$
 and define 
  \begin{align*}
  \mathcal{V}_\tau(\kappa, \tilde{\kappa}):= \bigg \lbrace v :\,&[0,\tau] \rightarrow W^{2}_{q,D}(-1,1) \, \bigg \vert \, \\
  &\Vert v(t)-v(s) \Vert_{W^{2-\xi}_{q,D}(-1,1)} \leq \vert t-s \vert^\rho\,, \ \  v(t) \in S(\tilde{\kappa}) \cap Z(\kappa)\,,\ \ s,t\in[0,\tau] \bigg \rbrace 
 \end{align*}
 with $\mathcal{V}_\tau(\kappa, \tilde{\kappa}) \subset \mathcal{V}_\tau(\kappa)$. Thanks to the Theorem of Eberlein-Smulyan, $\mathcal{V}_\tau(\kappa, \tilde{\kappa})$, equipped with the metric
 $$d(v,w):= \sup_{t \in [0,\tau]} \Vert v(t) -w(t) \Vert_{W^{2-\xi}_{q,D}(-1,1)}\,,$$
 is a complete metric space. \\
 {\bf(ii)} {\it Definition of the map $\Lambda$}: Recall from \eqref{Gdefinition} that we use the abbreviation
 $$G(v(t))=\frac{-1}{1+v(t)} + \lambda g(v)(t) \,, \qquad v \in \mathcal{V}_{\tau}(\kappa,\tilde{\kappa}) \,, \qquad t \in [0,\tau]\,,$$
 for the right-hand side of \eqref{311023}, and note that
 $$[t \mapsto G(v(t))] \in C^\rho \big ([0,\tau], L_q(-1,1) \big )$$
 due to Corollary \ref{LipschitzRHSb}. Hence, thanks to \cite[Theorem II.1.2.1, Remark II.2.1.2\,(b)]{AmannLQPP}, the variation-of-constant-formula
 \begin{align*}
  \Lambda(v)(t):= U_{B(v)} (t,0)u_0 + \int_0^t U_{B(v)}(t,s) G(v(s)) \,\mathrm{d} s \,, \qquad t \in [0,\tau]\,,
 \end{align*}
defines for each $v \in \mathcal{V}_\tau(\kappa,\tilde{\kappa})$ the unique solution 
$$\Lambda(v) \in C^1 \big ([0,\tau],L_q(-1,1) \big ) \cap C \big ([0,\tau], W^2_{q,D}(-1,1) \big )$$
to the linear problem
\begin{align*}
  \partial_t u + B(v) u = G(v) \,, \qquad u(0)=u_0 \,.
\end{align*}
It remains to adjust $\tau \in (0,1]$ such that the map $\Lambda$ possesses further properties:\\
{\bf(iii)} {\it $\Lambda$ is a self-mapping}: It follows from \cite[Theorem II.5.3.1]{AmannLQPP} (with $\alpha=1-\xi/2 +2 \rho$ and $\beta = 1-\xi/2$) that 
\begin{align*}
  \Vert \Lambda(v)(t)&-\Lambda(v)(s) \Vert_{W^{2-\xi}_{q,D}(-1,1)}\\
  &\leq C_{12}(\kappa)\,\vert t-s \vert^{2 \rho} \left ( \Vert u_0 \Vert_{W_{q,D}^{2-\xi+4\rho}(-1,1)} + \Vert G(v(t))\Vert_{L_\infty((0,t),L_q(-1,1))} \right)\\
  &\leq C_{13}(\kappa) \left(\frac{1}{2\kappa} +C_7(\tilde{\kappa}) \right) \, \tau^{\rho} \, \vert t-s\vert^{\rho} \qquad v \in \mathcal{V}_\tau(\kappa,\tilde{\kappa}) \,, \ s,t \in [0,\tau]\,,
\end{align*}
where we additionally used the choice of $\kappa$ and Corollary \ref{LipschitzRHSb}. Making $\tau$ smaller, if necessary, we find, for arbitrary $v \in \mathcal{V}_\tau(\kappa,\tilde{\kappa})$ and $s,t \in [0,\tau]$, that
\begin{align}
   \Vert \Lambda(v)(t)-\Lambda(v)(s) \Vert_{W^{2-\xi}_{q,D}(-1,1)} \leq \vert t-s \vert^{\rho} \,.   \label{sm1}
\end{align}
Next, the triangle inequality and \eqref{sm1} imply that 
\begin{align}
 \Vert \Lambda(v)(t) \Vert_{W^{2-\xi}_{q,D}(-1,1)} &\leq \Vert \Lambda(v)(t)-\Lambda(v)(0) \Vert_{W^{2-\xi}_{q,D}(-1,1)} + \Vert u_0 \Vert_{W^{2-\xi}_{q,D}(-1,1)} \nonumber \\
 &\leq \tau^{\rho} + \frac{1}{2\kappa}  \,, \label{sm2}
\end{align}
while \eqref{sm1} combined with Sobolev's embedding theorem gives 
\begin{align}
 \Lambda(v)(t) &\leq u_0 + \Vert \Lambda(v)(t) - \Lambda(v)(0) \Vert_\infty \nonumber \\
&\leq 1- 2 \kappa + C \, \Vert \Lambda(v)(t)-\Lambda(v)(0) \Vert_{W_{q,D}^{2-\xi}(-1,1)} \nonumber \\
& \leq 1-2 \kappa +C\,\tau^{\rho}\,.\label{sm3}
\end{align}
A similar argument yields 
\begin{align}
\Lambda(v)(t) \geq -1 +2 \kappa -C \,\tau^\rho \,.\label{sm4}
\end{align}
Moreover, we have
\begin{align}
\Vert \Lambda (v)(t) \Vert_{W^2_{q,D}(-1,1)} &\leq C_{11}(\kappa) \Vert u_0 \Vert_{W^2_{q,D}(-1,1)} +C_{11}(\kappa) \int_{0}^t (t-s)^{\mu-1} \Vert G(v(s)) \Vert_{W^{2\mu}_{q,D}(-1,1)} \, \mathrm{d} s \nonumber \\
&\leq \frac{C_{11}(\kappa)}{2 \kappa} + C_{11}(\kappa)C_7(\tilde{\kappa}) \int_0^t s^{\mu-1} \, \mathrm{d} s \nonumber \\
&\leq \frac{1}{2 \tilde{\kappa}} + C_{11}(\kappa)C_7(\tilde{\kappa})\,\frac{\tau^\mu}{\mu}  \,, \label{sm5}
\end{align}
where we applied \eqref{PPPP} for the first inequality and Corollary \ref{LipschitzRHSb} for the second one, while the last inequality follows from the choice of $\tilde{\kappa}$. Note that in \eqref{sm5} the role of $\tilde{\kappa}$ becomes clear as we can only show that
$\Vert U_{B(v)}(t,0)u_0 \Vert_{W^2_{q,D}(-1,1)}$ is bounded, but have no possibility to adjust the bound to be smaller than $\frac{1}{2\kappa}$.
Making $\tau \in (0,1]$ smaller, if necessary, equations \eqref{sm1}-\eqref{sm5} imply that $\Lambda$ maps $\mathcal{V}_\tau(\kappa,\tilde{\kappa})$
into itself.\\
{\bf(iv)} {\it $\Lambda$ is a contraction}: Finally, for $v,w \in \mathcal{V}_\tau(\kappa,\tilde{\kappa})$ and $t \in [0,\tau]$, it follows from \cite[Theorem II.5.2.1]{AmannLQPP} (with $\alpha =1$, $\beta=1-\xi/2$ and $\gamma= \mu$) that 
\begin{align*}
 \Vert \Lambda&(v)(t)-\Lambda(w)(t) \Vert_{W^{2-\xi}_{q,D}(-1,1)} \\
 &\leq C_{14}(\kappa) \,\tau^{\xi/2} \, \bigg ( \Vert G(v) - G(w) \Vert_{L_\infty((0,t),L_q(-1,1))}
 \\ &\quad \ \ +\Vert B(v)-B(w) \Vert_{C([0,t], \mathcal{L}(W^2_{q,D},L_q)} \cdot \left (\Vert u_0 \Vert_{W^2_{q,D}(-1,1)} + \Vert G(v) \Vert_{L_\infty((0,t),W^{2\mu}_{q,D}(-1,1))} \right)\bigg ) \\
 &\leq C_{14}(\kappa) \, \tau^{\xi/2} \left( C_{7}(\tilde{\kappa}) + \ell(\kappa) \Big(\displaystyle\frac{1}{2\kappa} + C_7(\tilde{\kappa}) \Big) \right) \, d(v,w)\,. 
\end{align*}
Here, we have also applied Corollary \ref{LipschitzRHSb} and Lemma \ref{GeneratorSemigroup1} for the second inequality. Making $\tau \in (0,1]$ smaller, if necessary, and taking the supremum over $t \in [0,\tau]$, we find
\begin{align*}
 d\big (\Lambda(v), \Lambda(w) \big ) \leq \frac{1}{2} \, d(v,w) \,, \qquad v,w \in \mathcal{V}_\tau(\kappa,\tilde{\kappa})\,,
\end{align*}
i.e. $\Lambda$ is a contraction. \\

In view of (i)-(iv), Banach's fixed point argument yields the local existence of a unique solution $u \in C^1 \big ([0,\tau],L_q(-1,1) \big ) \cap C \big ( [0,\tau],W^2_{q,D}(-1,1) \big)$ while for fixed time $t \in [0,\tau)$ the transformed electrostatic potential $\phi_{u(t)}$ belongs to $W^2_2(\Omega)$ with $\Omega=(-1,1)\times(1,2)$, see \eqref{defphiv}, which is equivalent to $\psi_{u(t)} \in W^2_2 \big (\Omega(u(t))\big)$. Hence, everything is proven.
\qed \\

{\bf Proof of Corollary \ref{globex}.}
Since $\tau$ in the above fixed point argument only depends on $\kappa$ and $\tilde{\kappa}$ which itself only depends on $\kappa$, the statement follows easily by a contradiction argument.
\qed \\

Another consequence of the uniqueness of solutions is the following result on symmetry: 

\begin{cor}\label{Symmetry}
If the initial value $u_0$ is even, i.e. $u_0(z)=u_0(-z)$, then the unique maximal solution $u$ from Theorem \ref{localexistence} and the corresponding electrostatic potential $\psi_u$ are even with respect to $z$ at each time $t \in [0,T_{max})$.
\end{cor}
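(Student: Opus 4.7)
The plan is to use the uniqueness statement in Theorem \ref{localexistence} by constructing from $u$ a second solution via reflection in $z$ and identifying the two. Concretely, I set $\tilde{u}(t,z) := u(t,-z)$, which belongs to the regularity class $C^1([0,T_{max}),L_q(-1,1)) \cap C([0,T_{max}), W^2_{q,D}(-1,1))$, vanishes at $z=\pm 1$, stays strictly between $-1$ and $1$, and, thanks to the assumed evenness of $u_0$, satisfies $\tilde{u}(0,z) = u_0(-z) = u_0(z)$.

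Next I would check that $\tilde{u}$ solves the same parabolic equation \eqref{filmdimensionless2}. The left-hand side is invariant under $z \mapsto -z$: since $\partial_z \tilde{u}(t,z) = -\partial_z u(t,-z)$, the oddness of $\arctan$ and an additional sign from the outer $\partial_z$ combine to give $-\sigma\partial_z\arctan(\sigma\partial_z\tilde{u})(t,z) = -\sigma[\partial_z\arctan(\sigma\partial_z u)](t,-z)$. The singular term $-1/(\,\cdot\,+1)$ is obviously preserved, so it only remains to handle the electrostatic term. For this I would verify that $\psi_u \circ R$ with $R(z,r):=(-z,r)$ solves the reflected elliptic subproblem. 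The domain satisfies $\Omega(\tilde{u}) = R(\Omega(u))$, the operator in \eqref{psigleichung42} only involves $\partial_z^2$ and the radial part and is therefore invariant under $R$, and the boundary datum \eqref{RBPsi42} satisfies $h_{\tilde{u}}(z,r) = h_u(-z,r)$. Passing to the fixed rectangle $\Omega$ via the transformations $T_u$ and $T_{\tilde{u}}$ and invoking the uniqueness of solutions to \eqref{ellsub} contained in Lemma \ref{selliptic1} (and Lemma \ref{selliptic3} for the strong regularity), I conclude that $\psi_{\tilde{u}}(z,r) = \psi_u(-z,r)$. In particular $|\partial_r\psi_{\tilde{u}}(z,\tilde{u}(t,z)+1)|^2 = |\partial_r\psi_u(-z,u(t,-z)+1)|^2$, so the entire right-hand side of \eqref{filmdimensionless2} for $\tilde{u}$ at $(t,z)$ matches that for $u$ at $(t,-z)$, and the equation holds.

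Consequently $\tilde{u}$ solves the same Cauchy problem as $u$ on $[0,T_{max})$. The uniqueness part of Theorem \ref{localexistence} therefore forces $\tilde{u} = u$, which is precisely $u(t,z) = u(t,-z)$. The evenness of $\psi_u$ in $z$ is then immediate from the relation $\psi_u(-z,r) = \psi_{\tilde{u}}(z,r) = \psi_u(z,r)$ already recorded above, and $T_{max}(u_0) = T_{max}(u_0\circ(-\mathrm{id}))$ because the reflected problem is literally the same problem.

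The main (and only nontrivial) point is checking the reflection invariance of the elliptic subproblem, i.e. that $\psi_u\circ R$ is the unique solution on the reflected domain. This must be formulated on the fixed rectangle where Lemma \ref{selliptic1} applies; once that piece is recorded, the rest is routine bookkeeping together with an appeal to the uniqueness in Theorem \ref{localexistence}.
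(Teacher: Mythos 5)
Your argument is correct and is essentially the paper's own: define $\tilde u(t,z)=u(t,-z)$, use the unique solvability of the electrostatic subproblem to get $\psi_{\tilde u}(z,r)=\psi_u(-z,r)$ and hence $g(\tilde u)(z)=g(u)(-z)$, then invoke uniqueness from Theorem \ref{localexistence} to conclude $\tilde u=u$. The extra detail you supply (reflection invariance of the left-hand side and the passage to the fixed rectangle) is a fair elaboration of what the paper leaves implicit, not a different route.
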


\begin{proof}
First, let $v \in S(\kappa)$ for some $\kappa >0$ and define $\tilde{v}(z):= v(-z)$. Then, the unique solvability of the electrostatic problem implies that $\psi_{\tilde{v}}(z,r)=\psi_{v}(-z,r)$ for all $(z,r) \in \Omega(\tilde{v})$
and consequently $g(\tilde{v})(z)=g(v)(-z)$ for $z \in (-1,1)$ by definition of the electrostatic force in \eqref{filmdimensionless0}. Now, if the initial value $u_0$ in Theorem \ref{localexistence} is even, i.e. $u_0(z)=u_0(-z)$, then the uniqueness of solutions implies that the maximal solution
$u$ is even in $z$, too. In particular, $\tilde{u}(t) =u(t)$ for each $t \in [0,T_{max})$ and consequently $\psi_{u(t)}(z,r)=\psi_{u(t)}(-z,r)$.
\end{proof}

\section{Non-Existence of Global Solutions for Large Voltages}\label{section: Non-Existence}

The goal of the current section is to prove Theorem \ref{MainResultGlobalNonexistence}, i.e. the non-existence of global solutions for $\sigma \geq \sigma_{crit}$, $u_0 \geq u_{cat}$ and $\lambda$ above a critical value $\lambda_{crit}$. Here, $u_{cat}$ is the catenoid given by \eqref{catenoid}. \\

As a starting point, let us note that the parabolic comparison principle is, at least to some extend, applicable to \eqref{filmdimensionless2}-\eqref{RBPsi42}. More precisely, since the non-local electrostatic force $g(u)$ is always positive, one can show the following:

\begin{prop}\label{PCP}
If $u_0 \geq u_{cat}$, then $u(t) \geq u_{cat}$ for all $t\in [0,T_{max})$.
\end{prop}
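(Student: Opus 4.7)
The plan is to set $w := u - u_{cat}$ and to prove $w \geq 0$ by a linear parabolic comparison argument. The key observation is that $u_{cat}$, being a time-independent solution of the stationary minimal-surface equation \eqref{statlambda10}, satisfies \eqref{filmdimensionless2} up to the missing (and crucially non-negative) electrostatic contribution $\lambda g(\cdot)$. Subtracting $u_{cat}$'s equation from $u$'s and linearising the two nonlinear differences by
\begin{align*}
\mathrm{arctan}(\sigma \partial_z u) - \mathrm{arctan}(\sigma \partial_z u_{cat}) &= \sigma\, A(t,z)\, \partial_z w, \\
A(t,z) &:= \int_0^1 \frac{d\theta}{1 + \bigl(\sigma \partial_z u_{cat} + \theta \sigma \partial_z w\bigr)^2},
\end{align*}
together with
\begin{align*}
-\frac{1}{u+1} + \frac{1}{u_{cat}+1} = B(t,z)\, w, \qquad B(t,z) := \frac{1}{(u+1)(u_{cat}+1)} \geq 0,
\end{align*}
one finds that $w$ solves the linear parabolic equation
\begin{align*}
\partial_t w - \sigma^2\, \partial_z\bigl(A\, \partial_z w\bigr) - B\, w = \lambda\, g(u) \geq 0 \quad \text{in } (0,T_{max}) \times (-1,1),
\end{align*}
with $w(t, \pm 1) = 0$ and $w(0, \cdot) \geq 0$.

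The second step is a standard $L_2$-energy estimate for the negative part $w_- := \max\{-w, 0\} \geq 0$. Testing the equation above with $w_-$, integrating by parts in $z$ (boundary terms vanish since $w_- = 0$ at $z = \pm 1$), and using the pointwise chain-rule identities $w\, w_- = -w_-^2$ and $(\partial_z w)(\partial_z w_-) = -(\partial_z w_-)^2$ yields the identity
\begin{align*}
\tfrac{1}{2}\, \tfrac{d}{dt}\, \|w_-(t)\|_{L_2}^2 + \sigma^2 \int_{-1}^1 A\, (\partial_z w_-)^2\, dz + \lambda \int_{-1}^1 g(u)\, w_-\, dz = \int_{-1}^1 B\, w_-^2\, dz.
\end{align*}
Since $A \geq 0$ and $\lambda g(u)\, w_- \geq 0$, the two non-negative terms on the left can be discarded, leaving
\begin{align*}
\tfrac{d}{dt}\, \|w_-(t)\|_{L_2}^2 \leq 2\, \|B(t)\|_{\infty}\, \|w_-(t)\|_{L_2}^2.
\end{align*}
As $w_-(0) \equiv 0$ by the assumption $u_0 \geq u_{cat}$, Grönwall's lemma forces $w_- \equiv 0$ on any $[0, \tau] \subset [0, T_{max})$, and hence $u(t) \geq u_{cat}$ throughout $[0, T_{max})$.

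The main obstacle is technical rather than conceptual: to close the above computation one must know that $A$ and $B$ are uniformly bounded on every compact sub-interval $[0, \tau] \subset [0, T_{max})$. Theorem \ref{localexistence} supplies $u \in C([0, \tau], W^2_{q,D}(-1,1)) \hookrightarrow C([0, \tau], C^1([-1, 1]))$, which yields a uniform bound on $\sigma \partial_z u$ and thus on $A$. The strict inequality $u > -1$, combined with the compactness of $[0, \tau] \times [-1, 1]$ and the continuity of $u$, guarantees a uniform separation $u(t,z) \geq -1 + \delta(\tau)$ for some $\delta(\tau) > 0$, which, together with the analogous bound for the smooth catenoid $u_{cat}$, gives $\|B(t)\|_{\infty} \leq C(\tau)$ on $[0, \tau]$. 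This is the only reason the Grönwall step has to be run on compact sub-intervals before letting $\tau \uparrow T_{max}$; a brief justification via Stampacchia's truncation lemma is also needed to ensure that $w_-$ qualifies as a test function given that $u$ only lives in $C^1([0, \tau], L_q(-1,1)) \cap C([0, \tau], W^2_{q,D}(-1,1))$.
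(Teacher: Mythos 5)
Your proof is correct and self-contained, but it takes a genuinely different technical route from the paper's. The paper simply defers to an adaptation of a classical quasilinear parabolic comparison theorem (Lieberman, Theorem 9.7), i.e.\ a maximum-principle-based comparison argument, with the details relegated to the author's thesis. You instead run a Stampacchia-type $L_2$-energy argument on the negative part $w_-=(u-u_{cat})_-$: subtract the equations, write the two nonlinear differences in the mean-value (integrated) form to produce a linear parabolic inequality with nonnegative source $\lambda g(u)$, test with $-w_-$, discard the nonnegative terms $\sigma^2\int A\,(\partial_z w_-)^2$ and $\lambda\int g(u)\,w_-$, and close with Gr\"onwall on $[0,\tau]$. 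The linearisation, the identity for $\tfrac12\tfrac{d}{dt}\|w_-\|_{L_2}^2$, and the sign observations are all correct, and your remarks on why $B$ is bounded on compact subintervals (continuity plus $u>-1$ plus compactness) and on Stampacchia's lemma for $w_-$ address exactly the two places where the formal computation needs justification; one could even drop the discussion of bounding $A$, since $0<A\le 1$ trivially. The trade-off is the usual one: the energy method is elementary and self-contained and only requires the $L_2$-pairing of $\partial_t w$ with $w_-$ to make sense (which your regularity $u\in C^1([0,\tau],L_q)\cap C([0,\tau],W^2_{q,D})$ more than delivers), whereas invoking Lieberman's comparison theorem is shorter on the page but requires checking that its structural hypotheses are met for a nonlinearity of the form $\sigma\partial_z\arctan(\sigma\partial_z\cdot)$ with a nonlocal lower-order term; your route avoids that verification entirely by exploiting that $u_{cat}$ is a \emph{sub}solution of a \emph{linear} problem after the mean-value reduction.
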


\begin{proof} 
This follows from an adaptation of \cite[Theorem 9.7]{L96}.
For details, we refer to \cite[Proposition 6.4]{LSS24}.

\end{proof}

Now, for given solution $(u,\psi_u)$ to \eqref{filmdimensionless2}-\eqref{RBPsi42} with initial value $u_0 \geq u_{cat}$, we can consider the energy functional 
\begin{align*}
\mathcal{E}(t)=- \int_{-1}^{1} \mathrm{ln}\big (u(t,z)+1 \big)\,\mathrm{d}z \,, \qquad t \in [0,T_{max})\,, \qquad T_{max}=T_{max}(u_0)\,,
\end{align*}
for which we have to prove that it decreases and satisfies the inequality \eqref{LjapunovEst}. The inequality  will follow from several auxiliary results.

 \begin{lem}\label{Energyequality}
The functional $\mathcal{E}$ belongs to $C^1 \big ([0,T_{max}),\mathbb{R} \big)$ with derivative
\begin{align}
\frac{\mathrm{d}}{\mathrm{d} t} \mathcal{E}(t)= -\int_{-1}^1 \frac{\partial_t u(t,z)}{u(t,z)+1} \, \mathrm{d} z \,.\label{eqGN1}
\end{align}
\end{lem}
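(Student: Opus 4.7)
The plan is to verify that the integrand $\ln(u(t,z)+1)$ is sufficiently regular so that one may differentiate under the integral sign. The key observation is that under the running assumption $u_0 \geq u_{cat}$, the parabolic comparison result in Proposition \ref{PCP} provides $u(t,z) \geq u_{cat}(z)$ for all $t \in [0,T_{max})$, and since
\[
u_{cat}(z)+1 = \frac{\cosh(cz)}{\cosh(c)} \geq \frac{1}{\cosh(c)} > 0, \qquad z \in [-1,1],
\]
one obtains a uniform lower bound $u(t,z)+1 \geq 1/\cosh(c) > 0$, independent of $t$ and $z$. Together with the upper bound $u(t,z) < 1$ this keeps the argument of the logarithm inside a compact subinterval of $(0,\infty)$ on which $v \mapsto \ln(v+1)$ is smooth and Lipschitz, with uniformly bounded derivative $1/(v+1)$.

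Next, I would use the regularity of $u$ asserted by Theorem \ref{localexistence}, namely
\[
u \in C^1\big([0,T_{max}), L_q(-1,1)\big) \cap C\big([0,T_{max}), W^2_{q,D}(-1,1)\big)
\]
with $q > 2$. In particular $u(t) \in C([-1,1])$ for each $t$ by Sobolev embedding, and $\partial_t u(t) \in L_q(-1,1) \hookrightarrow L_1(-1,1)$ since the spatial domain is bounded. The chain rule for superposition operators of Lipschitz functions on bounded intervals, combined with the uniform bounds above, transfers the $C^1$-regularity to $t \mapsto \ln(u(t)+1)$ as a map into $L_q(-1,1)$, with time derivative $\partial_t u(t)/(u(t)+1)$.

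Then $\mathcal{E}(t)$ is the composition of this $C^1$-curve with the bounded linear functional $f \mapsto -\int_{-1}^1 f\, dz$ on $L_1(-1,1)$, so $\mathcal{E} \in C^1([0,T_{max}), \mathbb{R})$ with the claimed derivative \eqref{eqGN1}. Equivalently, one can verify the formula directly by writing
\[
\frac{\mathcal{E}(t+h) - \mathcal{E}(t)}{h} = -\int_{-1}^1 \frac{1}{h}\bigl[\ln(u(t+h,z)+1) - \ln(u(t,z)+1)\bigr]\, dz,
\]
applying the mean value theorem pointwise to obtain an integrand of the form $\bigl(u(t+h,z)-u(t,z)\bigr)/\bigl(h\, \theta_{t,h}(z)\bigr)$ with $\theta_{t,h}(z)$ between $u(t,z)+1$ and $u(t+h,z)+1$, and then passing to the limit $h \to 0$ using dominated convergence: the denominator $\theta_{t,h}$ stays uniformly bounded below thanks to the comparison principle, while $\bigl(u(t+h,\cdot)-u(t,\cdot)\bigr)/h \to \partial_t u(t,\cdot)$ in $L_q(-1,1)$ hence (along a subsequence) almost everywhere.

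The only subtlety is ensuring the dominating function exists; this reduces to showing that the difference quotients of $u$ are uniformly bounded in some $L^p$ norm on a neighborhood of $t$, which is automatic from $u \in C^1([0,T_{max}), L_q(-1,1))$ via the fundamental theorem of calculus in Banach spaces. Continuity of $\frac{d}{dt}\mathcal{E}$ follows from continuity of $t \mapsto \partial_t u(t)$ into $L_q(-1,1)$ together with continuity of $t \mapsto 1/(u(t)+1)$ into $L_\infty(-1,1)$, which again is a consequence of the uniform lower bound on $u(t)+1$ and the continuity of $u$ into $C([-1,1])$.
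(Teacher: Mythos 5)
Your argument follows essentially the same route as the paper: the mean value theorem combined with the regularity $u \in C\big([0,T],W^2_q(-1,1)\big) \cap C^1\big([0,T],L_q(-1,1)\big)$ for every $T < T_{max}$. You add the helpful observation that the comparison principle yields the uniform lower bound $u+1 \geq 1/\cosh(c)$, though strictly speaking this lemma only needs a local bound, which already follows from continuity of $t \mapsto u(t)$ into $C([-1,1])$.

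One small technical imprecision: in the difference-quotient argument you invoke dominated convergence and claim the dominating function ``is automatic'' because the difference quotients are uniformly bounded in $L_q$. A uniform $L_q$ bound gives uniform integrability, not a pointwise dominating function, so as stated this step appeals to the wrong theorem (Vitali, not dominated convergence). The cleaner way to finish, which also avoids passing to a.e.\ subsequences, is to exploit that $u(t+h) \to u(t)$ uniformly on $[-1,1]$ (since $u$ is continuous into $W^2_q \hookrightarrow C$), so $\theta_{t,h} \to u(t)+1$ and hence $f'(\theta_{t,h}) = 1/\theta_{t,h} \to 1/(u(t)+1)$ uniformly in $z$, while $(u(t+h)-u(t))/h \to \partial_t u(t)$ in $L_q$; the product then converges in $L_1$ directly by Hölder. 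Your alternative ``chain rule for superposition operators'' phrasing is likewise a bit loose (genuine Fréchet differentiability of Nemytskii operators on $L_q$ fails for nonlinear $f$), but the direct MVT computation just described validates exactly the conclusion you want.
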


\begin{proof} 
This follows from the the mean value theorem together with the regularity $u \in C \big ([0,T],W^2_q(-1,1) \big) \cap C^1 \big ([0,T],L_q(-1,1) \big)$. 
\end{proof}

\begin{figure}[h]
\begin{tikzpicture}
\fill[red!7] (-2.45,1.4) rectangle (2.45,3);
\draw[domain=-2.45:0, smooth, variable=\z, red!7, fill] plot ({\z},{(0.056*\z*\z*\z*\z+3-0.35*\z*\z-1.4)});
\draw[domain=0:2.45, smooth, variable=\z, red!7, fill] plot ({\z},{(0.056*\z*\z*\z*\z+3-0.35*\z*\z-1.4)});
\draw [<-, bend angle=45, bend left]  (0.6,1.52) to (1,1.72) node[right]{$\psi_u=0$};
\draw[domain=-1.5:1.5, smooth, variable=\z, white, fill] plot ({\z},{(0.056*\z*\z*\z*\z+3-0.35*\z*\z-1.4)});
\draw [<-, bend angle=45, bend left]  (0.6,1.52) to (1,1.72) node[right]{$\psi_u=0$};
\draw[domain=-2.2:2.2, smooth, variable=\z, red, thick] plot ({\z},{(0.056*\z*\z*\z*\z+3-0.35*\z*\z-1.4)});
\draw[thick] (-2.45,3) -- (2.45,3) ;
\draw[thick] (-2.45,3) -- (-2.45,1.46) ;
\draw[thick] (2.45,3) -- (2.45,1.46) ;
\draw[domain=-2.45:2.45, smooth, variable=\z, blue, thick] plot ({\z},{cosh(0.56*\z)-0.64});
\draw [<-, bend angle=10, bend right]  (0.4,0.44) to (0.3,0.65);
\draw[thin] (0.38,0.5) to (0.38,0.51) node[above]{\hspace{-5mm}$\psi_{cat}=0$};
\draw[dashed](2.45,0) -- (2.45,3) ;
\draw[dashed] (-2.45,3) -- (-2.45,0) ;
\draw[dashed] (-2.45,3) -- (-2.45,0) ;
\draw[dashed] (-2.45,3) -- (-2.45,0) ;
\draw[->] (-3,0) -- (3,0) node[right] {$z$};
\draw (2.45,0.1) -- (2.45,-0.1) node[below] {$1$};
\draw (-2.45,0.1) -- (-2.45,-0.1) node[below] {$-1$};
\draw[->] (-2.9,-0.1) -- (-2.9,3.2) node[right] {$r$};
\draw (-2.8,1.5) -- (-3,1.5) node[left] {$1$};
\draw (-2.8,3) -- (-3,3) node[left] {$2$};
\node[above] at (0,1.9) {$\Omega(u)$};
\end{tikzpicture}
\caption[Situation in the Proof of Proposition \ref{lowerEstimateRHS}]{The situation in the proof of Proposition \ref{lowerEstimateRHS}: The film deflection $u+1$ (red) lies above the catenoid $u_{cat}+1$ (blue).
Note that the electrostatic potentials $\psi_u$ and $\psi_{cat}$ coincide on the black boundary parts of $\Omega(u)$ and are positive there.}\label{figmaxprin}
\end{figure}
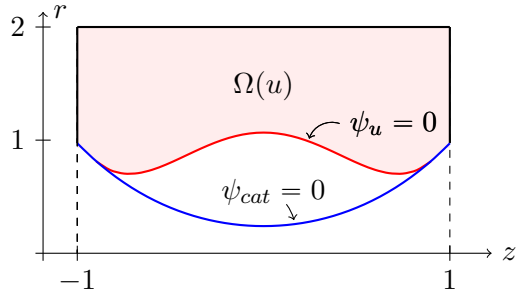

Next, we derive suitable estimates for the right-hand side of \eqref{eqGN1}, and, as the main step, we connect the electrostatic force with the $L_1$-norm of $\partial_z u$. The proof is based on Gauss's theorem and its idea is inspired by \cite{ELW13}.

\begin{prop}\label{lowerEstimateRHS}
There exists a constant $C_{15}(\sigma) >0$ \color{black} (independent of $t$ and $u_0$) such that 
\begin{align*}
\int_{-1}^1 \big ( 1+ (\sigma \partial_z u)^2 \big)^{3/2} \vert \partial_r \psi_u (z,u+1) \vert^2 \,\mathrm{d} z \geq \varepsilon\, C_{15}(\sigma) \color{black} - \varepsilon^2 \int_{-1}^1 \sqrt{1+(\sigma\partial_z u)^2}\, \mathrm{d} z
\end{align*}
for each $t \in [0,T_{max})$ and each $\varepsilon >0$.
\end{prop}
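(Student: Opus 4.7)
My plan is to first peel off the $\varepsilon$-dependence by an elementary squaring trick. Starting from the pointwise inequality
$$\bigl((1+\sigma^2(\partial_z u)^2)^{3/4}|\partial_r\psi_u(z,u+1)| - \varepsilon (1+\sigma^2(\partial_z u)^2)^{1/4}\bigr)^2 \geq 0$$
and integrating over $(-1,1)$ yields
$$\int_{-1}^1 (1+\sigma^2(\partial_z u)^2)^{3/2}|\partial_r\psi_u|^2\,dz \;\geq\; 2\varepsilon\!\int_{-1}^1 (1+\sigma^2(\partial_z u)^2)\,\partial_r\psi_u\,dz \;-\; \varepsilon^2\!\int_{-1}^1 \sqrt{1+\sigma^2(\partial_z u)^2}\,dz.$$
The modulus in the middle integral can be dropped because the strong maximum principle together with Hopf's lemma, applied to $\psi_u$ at its minimum value $0$ on the film, forces $\partial_r\psi_u(z,u+1) > 0$. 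It therefore suffices to exhibit a constant $C_{15}(\sigma) > 0$, independent of $u$ with $u \geq u_{cat}$, such that $\int_{-1}^1 (1+\sigma^2(\partial_z u)^2)\,\partial_r\psi_u(z,u+1)\,dz \geq \tfrac{1}{2}C_{15}(\sigma)$.

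\noindent\textbf{Gauss's theorem produces a flux identity.}
I then apply Gauss's theorem to the weighted vector field $\mathbf{F}(z,r) := \bigl(r\sigma^2\partial_z\psi_u,\,r\partial_r\psi_u\bigr)$ on $\Omega(u)$. The elliptic equation \eqref{psigleichung42} translates directly into $\operatorname{div}\mathbf{F} = 0$, so $\int_{\partial\Omega(u)} \mathbf{F}\cdot\mathbf{n}\,ds = 0$. Parametrising the film $r = u(z)+1$ by the outward unit normal $(\partial_z u, -1)/\sqrt{1+(\partial_z u)^2}$ and arclength element $\sqrt{1+(\partial_z u)^2}\,dz$, and using the film identity $\partial_z\psi_u = -(\partial_z u)\,\partial_r\psi_u$ (obtained by differentiating $\psi_u(z,u(z)+1) \equiv 0$), the film contribution collapses to $-(u+1)(1+\sigma^2(\partial_z u)^2)\partial_r\psi_u\,dz$. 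Collecting this with the contributions from the top $r = 2$ and the sides $z = \pm 1$ gives
$$\int_{-1}^1 (u+1)(1+\sigma^2(\partial_z u)^2)\,\partial_r\psi_u(z,u+1)\,dz \;=\; \int_{-1}^1 2\,\partial_r\psi_u(z,2)\,dz \;+\; \sigma^2\!\int_1^2 r\bigl[\partial_z\psi_u(1,r)-\partial_z\psi_u(-1,r)\bigr]\,dr.$$
Since $u+1 \leq 2$, the quantity isolated in the previous paragraph is bounded below by half the left-hand side of this identity.

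\noindent\textbf{Catenoid comparison (the main step).}
The main obstacle is to bound the right-hand side of the flux identity above uniformly from below in spite of the a priori ambiguous sign of the lateral contribution. Here the hypothesis $u \geq u_{cat}$ enters crucially: it gives $\Omega(u) \subset \Omega(u_{cat})$, and because $u(\pm 1) = u_{cat}(\pm 1) = 0$ the lateral Dirichlet data coincide, $h_u = h_{u_{cat}} = \ln r/\ln 2$ on $z = \pm 1$. Consequently $\phi := \psi_u - \psi_{cat}\big|_{\Omega(u)}$ satisfies the same elliptic equation with $\phi \equiv 0$ on the top and on both sides and $\phi = -\psi_{cat}(\,\cdot\,,u+1) \leq 0$ on the film (strictly negative where $u > u_{cat}$ by the strong maximum principle applied to $\psi_{cat}$ on $\Omega(u_{cat})$). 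The maximum principle yields $\phi \leq 0$ throughout $\Omega(u)$; since $\phi$ attains its maximum value $0$ on each of the top and the two sides, Hopf's lemma provides the three inequalities $\partial_r\psi_u(z,2) \geq \partial_r\psi_{cat}(z,2)$, $\partial_z\psi_u(1,r) \geq \partial_z\psi_{cat}(1,r)$, and $\partial_z\psi_u(-1,r) \leq \partial_z\psi_{cat}(-1,r)$ — all three pointing in exactly the direction needed. Inserting them into the flux identity, and comparing against the same Gauss identity applied to $\psi_{cat}$ on $\Omega(u_{cat})$, leads to
$$\int_{-1}^1 (u+1)(1+\sigma^2(\partial_z u)^2)\partial_r\psi_u(z,u+1)\,dz \;\geq\; \int_{-1}^1 (u_{cat}+1)\bigl(1+\sigma^2(\partial_z u_{cat})^2\bigr)\partial_r\psi_{cat}(z,u_{cat}+1)\,dz.$$
The right-hand side depends only on $\sigma$ and is strictly positive (Hopf once more, on the film of the catenoid); taking it as $C_{15}(\sigma)$ closes the argument.
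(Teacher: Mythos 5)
Your proposal is correct and follows essentially the same route as the paper: both rest on (a) the Gauss flux identity for the divergence-form equation on $\Omega(u)$, (b) a catenoid comparison via the maximum principle on $\psi_u - \psi_{cat}$ yielding $\partial_\nu\psi_u \geq \partial_\nu\psi_{cat}$ on the top and sides, (c) Hopf's lemma to show the catenoid constant $C_{15}(\sigma)$ is strictly positive, and (d) an $\varepsilon$-trick to package the result — your ``completing the square'' is exactly the paper's weighted Young's inequality, just derived in the opposite order. The only cosmetic difference worth noting is that you invoke Hopf's lemma for $\psi_u$ to drop the modulus in the middle term; this is not needed, since the crude bound $|\partial_r\psi_u| \geq \partial_r\psi_u$ already suffices there, and the paper indeed avoids committing to the sign at that point.
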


\begin{proof} 
In the following, we fix $t \in [0,T_{max})$ and use the abbreviations $\psi_{u}:= \psi_{u(t)}$ and $\psi_{cat}:=\psi_{u_{cat}}$. The situation is depicted in Figure \ref{figmaxprin}. Since $u$ always stays above the catenoid $u_{cat}$, we can consider the function $f:=\psi_{u} - \psi_{cat}$ in $\Omega(u)$.\\ On the boundary of $\Omega(u)$, this function satisfies 
\begin{align*}
f&=0 \quad \text{on} \quad \lbrace \pm 1\rbrace \times [1,2]\,, \\
f&=0 \quad \text{on} \quad [-1,1] \times \lbrace 2 \rbrace\,, \\
f &\leq 0 \quad \text{on} \quad \mathrm{graph}(u+1)\,.
\end{align*}
Therefore, the maximum principle implies that $f$ attains its maximum on the whole boundary parts $\lbrace \pm 1\rbrace \times [1,2]$ and $[-1,1] \times \lbrace 2 \rbrace$. Hence, the outer normal derivative of $f$ satisfies
\begin{align*}
\partial_\nu f \geq 0 \quad \text{on} \quad \lbrace \pm 1\rbrace \times (1,2) \quad \text{and on} \quad (-1,1) \times \lbrace 2 \rbrace\,,
\end{align*}
which is equivalent to 
\begin{align*}
\partial_\nu \psi_u \geq \partial_\nu \psi_{cat} \quad \text{on} \quad \lbrace \pm 1\rbrace \times (1,2) \quad \text{and on} \quad (-1,1) \times \lbrace 2 \rbrace\,.
\end{align*}
Since $\psi_u$ solves
\begin{align}
0=\mathrm{div} \bigg (  r  \begin{pmatrix} \sigma^2 \partial_z \psi_u \\ \partial_r \psi_u \end{pmatrix} \bigg ) \quad \text{in} \quad \Omega(u)\,, \label{cylindiv}
\end{align}
we deduce from Gauss's theorem and $\partial_z \psi_u = - \partial_z u \, \partial_r \psi_u$ on $\mathrm{graph}(u+1)$ that 
\begin{align}
\int_{-1}^1 (u+1)\big ( 1&+ (\sigma \partial_z u)^2 \big) \partial_r \psi_u (z,u+1) \,\mathrm{d} z \nonumber \\&= - \int_{\mathrm{graph}(u+1)} r\, \begin{pmatrix} \sigma^2 \partial_z \psi_u \\ \partial_r \psi_u \end{pmatrix}\cdot \nu \, \mathrm{d} o(z,r) \nonumber \\
&=\int_{1}^2 \sigma^2\,r \,\partial_\nu \psi_u(-1,r) \,\mathrm{d} r + \int_{-1}^1 2\,  \partial_\nu \psi_u(z,2)  \, \mathrm{d} z + \int_{1}^2  \sigma^2\,r\, \partial_\nu \psi_u(1,r) \,\mathrm{d} r \nonumber \\
&\geq \int_{1}^2 \sigma^2\,r \,\partial_\nu \psi_{cat}(-1,r) \,\mathrm{d}r + \int_{-1}^1 2 \,\partial_\nu \psi_{cat}(z,2)  \, \mathrm{d}z + \int_{1}^2 \sigma^2\,r\, \partial_\nu \psi_{cat}(1,r) \,\mathrm{d} r\nonumber \\
&= - \int_{\mathrm{graph}(u_{cat}+1)} r\, \begin{pmatrix} \sigma^2 \partial_z \psi_{cat} \\ \partial_r \psi_{cat} \end{pmatrix}\cdot \nu  \, \mathrm{d} o(z,r)\nonumber \\
&=\int_{-1}^1 (u_{cat}+1)\big ( 1+ (\sigma \partial_z u_{cat})^2 \big) \partial_r \psi_{cat} (z,u_{cat}+1) \,\mathrm{d} z =:  C_{15}(\sigma)\,. \label{eqGN3}
\end{align}

In the last step, we have used that $\psi_{cat}$ solves \eqref{cylindiv} in $\Omega(u_{cat})$. Next, we show that $C_{15}(\sigma) > 0$: Because $\psi_{cat}$ attains its minimum on the whole $\mathrm{graph}(u_{cat}+1)$, it follows from Hopf's Lemma that $\partial_\nu \psi_{cat} < 0$, and hence 
\begin{align*}
0 &>  \partial_z \psi_{cat}\big(z,u_{cat}(z)+1\big) \partial_z u_{cat}(z) - \partial_r \psi_{cat}\big(z,u_{cat}(z)+1\big)\\
&= - \big (1+\partial_z u_{cat}(z)^2 \big )\, \partial_r \psi_{cat}\big (z,u_{cat}(z)+1 \big ) \,.
\end{align*} 
Consequently, $\partial_r \psi_{cat} > 0$ on $\mathrm{graph}(u_{cat}+1)$ and 
\begin{align*}
\begin{pmatrix} \sigma^2 \partial_z \psi_{cat} \\ \partial_r \psi_{cat} \end{pmatrix}\cdot \nu &= \begin{pmatrix} \sigma^2 \partial_z \psi_{cat} \\ \partial_r \psi_{cat} \end{pmatrix} \cdot \frac{1}{\sqrt{1+\partial_z u_{cat}^2}} \begin{pmatrix}
 \partial_z u_{cat} \\ -1 \end{pmatrix} \\
 &= \frac{-\big(1+(\sigma\partial_z u_{cat})^2\big)}{\sqrt{1+\partial_z u_{cat}^2}}\, \partial_r \psi_{cat} \big (z,u_{cat}(z)+1 \big) <0\,,
\end{align*}
which implies $C_{15}(\sigma) >0$. Now we are ready to finish off the proof: A combination of \eqref{eqGN3} with $u+1 \in (0,2)$ and the weighted Young's inequality gives
\begin{align*}
\frac{C_{15}(\sigma)}{2} \color{black} &\leq \int_{-1}^1 \big (1+(\sigma \partial_z u)^2 \big)^{3/4+1/4} \vert \partial_r \psi_u (z,u+1) \vert\, \mathrm{d} z \\
&\leq \frac{1}{2\varepsilon} \int_{-1}^1 \big (1+(\sigma \partial_z u)^2 \big)^{3/2} \vert \partial_r \psi_u(z,u+1) \vert^2 \, \mathrm{d} z + \frac{\varepsilon}{2} \int_{-1}^1 \sqrt{1+(\sigma \partial_z u)^2} \, \mathrm{d} z 
\end{align*}
for $\varepsilon >0$, and multiplying this inequality by $2 \varepsilon$ yields
\begin{align*}
\int_{-1}^1 \big ( 1+ (\sigma \partial_z u)^2 \big)^{3/2} \vert \partial_r \psi_u(z,u+1) \vert^2 \,\mathrm{d} z \geq \varepsilon\, C_{15}(\sigma)  - \varepsilon^2 \int_{-1}^1 \sqrt{1+(\sigma\partial_z u)^2}\, \mathrm{d} z
\end{align*}
as claimed.
\end{proof}

Finally, the last auxiliary result compares the integral of $\arctan(\sigma \partial_z u) \,\sigma\partial_z u$ with the $L_1$-norm of $\partial_z u$:

\begin{lem}\label{Arctan}
For each $t \in [0,T_{max})$, the estimate
\begin{align*}
\int_{-1}^1 \arctan (\sigma \partial_z u  )\, \sigma\partial_z u \, \mathrm{d} z \geq \frac{\pi}{4} \int_{-1}^1  \sqrt{1+(\sigma\partial_z u)^2}\,\mathrm{d} z - \pi 
\end{align*}
holds.
\end{lem}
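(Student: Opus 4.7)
The plan is to reduce the lemma to a pointwise inequality in the variable $y = \sigma \partial_z u(t,z)$ and then integrate. More precisely, I will show
\begin{align*}
 \arctan(y)\,y \;\geq\; \frac{\pi}{4}\sqrt{1+y^2} \;-\; \frac{\pi}{4}\,, \qquad y \in \mathbb{R}\,,
\end{align*}
which, after setting $y = \sigma \partial_z u(t,z)$ and integrating over $z \in (-1,1)$, immediately yields
\begin{align*}
 \int_{-1}^1 \arctan(\sigma \partial_z u)\,\sigma\partial_z u\,\mathrm{d}z \;\geq\; \frac{\pi}{4}\int_{-1}^1 \sqrt{1+(\sigma\partial_z u)^2}\,\mathrm{d}z \;-\; \frac{\pi}{2}\,,
\end{align*}
which is even stronger than the claimed bound with constant $-\pi$. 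So the task collapses to an elementary one-variable calculus exercise.

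To prove the pointwise inequality, define $h(y) := \arctan(y)\,y - \tfrac{\pi}{4}\sqrt{1+y^2}$. Since $h$ is even, it suffices to show $h(y) \geq h(0) = -\pi/4$ for all $y \geq 0$, which I would do by showing that $h$ is nondecreasing on $[0,\infty)$. A direct computation gives
\begin{align*}
 h'(y) \;=\; \arctan(y) + \frac{y}{1+y^2} - \frac{\pi y}{4\sqrt{1+y^2}}\,,
\end{align*}
so the key auxiliary estimate is
\begin{align*}
 \arctan(y) \;\geq\; \frac{\pi y}{4\sqrt{1+y^2}}\,, \qquad y \geq 0\,. \tag{$\ast$}
\end{align*}

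Once $(\ast)$ is established, the remaining nonnegative term $y/(1+y^2)$ ensures $h'(y) \geq 0$ on $[0,\infty)$, so $h(y) \geq h(0) = -\pi/4$ follows at once. To prove $(\ast)$, set $p(y) := \arctan(y) - \tfrac{\pi y}{4\sqrt{1+y^2}}$; then $p(0) = 0$ and a short computation yields
\begin{align*}
 p'(y) \;=\; \frac{1}{1+y^2} - \frac{\pi}{4\,(1+y^2)^{3/2}} \;=\; \frac{\sqrt{1+y^2} - \pi/4}{(1+y^2)^{3/2}} \;>\; 0\,,
\end{align*}
where positivity holds because $\sqrt{1+y^2} \geq 1 > \pi/4$. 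Hence $p \geq 0$ on $[0,\infty)$, proving $(\ast)$.

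There is no real obstacle here; the only mildly delicate point is recognising that after one elementary algebraic manipulation the derivative inequality $(\ast)$ reduces to the numerical comparison $1 > \pi/4$. Combining the three steps finishes the proof, with the constant $-\pi$ in the statement being somewhat loose (we actually obtain $-\pi/2$).
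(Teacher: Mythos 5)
Your proof is correct, and it is a genuinely different argument from the paper's. The paper splits $[-1,1]$ into the set $A=\{\,|\arctan(\sigma\partial_z u)|\geq \pi/4\,\}$ and its complement: on $A$ it uses $\arctan(\sigma\partial_z u)\,\sigma\partial_z u\geq\frac{\pi}{4}\sigma|\partial_z u|$, on $A^c$ the crude bound $\sigma|\partial_z u|\leq 1$, and then the elementary inequality $\sqrt{1+x^2}\leq 1+|x|$; after collecting the error terms this yields the constant $-\pi$. You instead establish the sharp pointwise inequality $\arctan(y)\,y\geq\frac{\pi}{4}\sqrt{1+y^2}-\frac{\pi}{4}$ by a monotonicity argument for $h(y)=\arctan(y)\,y-\frac{\pi}{4}\sqrt{1+y^2}$, reducing to the auxiliary bound $\arctan(y)\geq\frac{\pi y}{4\sqrt{1+y^2}}$, whose derivative comparison comes down to $\sqrt{1+y^2}\geq 1>\pi/4$. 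All the computations check out (in particular $\frac{d}{dy}\bigl(y/\sqrt{1+y^2}\bigr)=(1+y^2)^{-3/2}$ and $h(0)=-\pi/4$), and since $h$ is even and nondecreasing on $[0,\infty)$, the bound $h\geq -\pi/4$ is in fact optimal. Integrating then gives the constant $-\pi/2$, a strict improvement over the $-\pi$ stated in the lemma. Your approach is cleaner, avoids the case split, and yields the sharp pointwise inequality; the paper's approach is shorter to write and avoids any calculus, which is perfectly adequate since only a finite additive constant is needed for the non-existence argument in which the lemma is used.
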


\begin{proof}
We recall that
\begin{align*}
\arctan(x)x \geq 0 \,, \qquad \arctan(1)=\frac{\pi}{4}\,, \qquad \sqrt{x^2+y^2} \leq \vert x \vert + \vert y \vert
\end{align*}
for $x,y \in \mathbb{R}$ and introduce the set
\begin{align*}
A:= \Big \lbrace z \in [-1,1] \,\Big \vert \, \big \vert \arctan  (\sigma \partial_z u(z)  ) \big \vert \geq \frac{\pi}{4} \, \Big \rbrace \,.
\end{align*}
Noting that $\sigma \vert \partial_z u \vert \leq 1$ on $A^c$, we estimate
\begin{align*}
\int_{-1}^1 \arctan (\sigma \partial_z u  )\, \sigma\partial_z u \, \mathrm{d} z &\geq \int_A \arctan (\sigma \partial_z u  )\, \sigma\partial_z u \, \mathrm{d} z  \\
&\geq \frac{\pi}{4} \int_{A} \sigma \vert \partial_z u \vert \, \mathrm{d} z + \frac{\pi}{4} \int_{A^c} \sigma \vert \partial_z u \vert \, \mathrm{d} z - \frac{\pi}{2} \\
&= \frac{\pi}{4}\int_{-1}^1 \sigma \vert \partial_z u \vert \, \mathrm{d} z + \frac{\pi}{4} \int_{-1}^1 1 \, \mathrm{d} z - \pi \\
&\geq \frac{\pi}{4}  \int_{-1}^1  \sqrt{1+(\sigma\partial_z u)^2}\, \mathrm{d} z - \pi\,.
\end{align*}
\vspace{-0.5cm}
\end{proof}

Based on Lemma \ref{Energyequality}$-$Lemma \ref{Arctan}, we can prove the main result of this section:\\

{\bf Proof of Theorem \ref{MainResultGlobalNonexistence}.}
Let $\lambda >0$ and $(u,\psi_u)$ be a solution to \eqref{filmdimensionless2}-\eqref{RBPsi42} with $u_0 \geq u_{cat}$. We have to show that $T_{max} < \infty$. Since the functional 
$$\mathcal{E}(t)= -\int_{-1}^1 \ln \big (u(t)+1 \big ) \, \mathrm{d} z \,, \quad t \in [0,T_{max})$$ 
is bounded from below by $-2\ln(2)$, it suffices to show 
\begin{align*}
\frac{\mathrm{d}}{\mathrm{d} t} \mathcal{E}(t)<-C < 0 \,, \quad t \in [0,T_{max})\,,
\end{align*}
for some $C >0$ independent of $t$, to exclude the possibility of global existence. Introducing the constant 
$$C_{16}(\sigma)\color{black}:= \frac{1}{\min_{z\in [-1,1]} u_{cat}+1} \in (0,\infty)\,,$$
we note that 
\begin{align*}
\frac{1}{2}\leq \frac{1}{u+1} \leq   C_{16}(\sigma)\color{black}\,,
\end{align*}
as $u$ always stays above the catenoid $u_{cat}$ by Proposition \ref{PCP}. Using \eqref{eqGN1} and \eqref{filmdimensionless2}, we find
\begin{align*}
\frac{\mathrm{d}}{\mathrm{d} t} \mathcal{E}(t) &=-\sigma \int_{-1}^1 \partial_z \mathrm{arctan}(\sigma \partial_z u) \frac{1}{u+1} \, \mathrm{d} z + \int_{-1}^1 \frac{1}{(u+1)^2} \, \mathrm{d} z \\
&\ \ \ - \lambda \int_{-1}^1 \frac{1}{u+1} \big (1+ (\sigma \partial_zu)^2 \big)^{3/2} \vert \partial_r \psi_u(z,u+1) \vert^2\,\mathrm{d} z \\
&\leq - \int_{-1}^1 \frac{\arctan(\sigma \partial_z u)\,\sigma \partial_z u}{(u+1)^2}\, \mathrm{d} z-\bigg [ \frac{\sigma \arctan(\sigma\partial_z u)}{u+1} \bigg ]_{-1}^{1} + 2 \,C_{16}(\sigma)\color{black}^2\\
&\ \ \ - \frac{\lambda}{2} \int_{-1}^1  \big (1+ (\sigma \partial_zu)^2 \big)^{3/2} \vert \partial_r \psi_u(z,u+1) \vert^2\,\mathrm{d} z \\
&\leq - \frac{1}{4} \int_{-1}^1 \arctan(\sigma\partial_z u) \sigma \partial_z u \, \mathrm{d} z + \sigma \pi + 2\,C_{16}(\sigma)\color{black}^2\\
&\ \ \ - \frac{\lambda}{2} \int_{-1}^1  \big (1+ (\sigma \partial_zu)^2 \big)^{3/2} \vert \partial_r \psi_u(z,u+1) \vert^2\,\mathrm{d} z \,.
\end{align*}
Next, for $\varepsilon >0$, Proposition \ref{lowerEstimateRHS} and Lemma \ref{Arctan} imply that
\begin{align*}
\frac{\mathrm{d}}{\mathrm{d} t} \mathcal{E}(t) &\leq -\frac{\pi}{16} \int_{-1}^1 \sqrt{1+(\sigma \partial_z u)^2}\, \mathrm{d} z +\frac{\pi}{4}+\sigma \pi + 2  C_{16}(\sigma)\color{black}^2 \\
&\ \ \ -\frac{\lambda}{2} \bigg ( \varepsilon  C_{15}(\sigma) \color{black} - \varepsilon^2 \int_{-1}^1 \sqrt{1+(\sigma \partial_z u)^2}\,\mathrm{d}z \bigg)\,.
\end{align*}
Choosing $\varepsilon = \displaystyle\sqrt{\frac{\pi}{8\lambda}}$, we reduce this inequality to 
\begin{align}
\frac{\mathrm{d}}{\mathrm{d} t} \mathcal{E}(t) &\leq \frac{\pi}{4}  \sigma \pi - 2  C_{16}(\sigma) \color{black}^2 - \frac{\sqrt{\lambda\,\pi\,} C_{15}(\sigma)}{4 \sqrt{2}}\,.  \label{eqGN4}
\end{align}
The right-hand side is strictly less than zero if $\lambda > \lambda_{crit}(\sigma)$ where 
\begin{align}
\lambda_{crit}(\sigma):= \frac{32}{\pi C_{15}(\sigma)^2} \,\Big ( \frac{\pi}{4}+ \sigma \pi +2 C_{16}(\sigma)^2 \Big )^2\,.  \label{eqGN5}
\end{align}
Hence, for $\lambda > \lambda_{crit}(\sigma)$, the solution $(u,\psi_u)$ cannot be global.\\ \qed

\begin{bem}
{\bf(i)} Computing the smallest possible value for $\lambda_{crit}(\sigma)$ in Theorem \ref{MainResultGlobalNonexistence} is of particular interest.
An upper bound for $\lambda_{crit}(\sigma)$ is given by formula \eqref{eqGN5}, where $\sigma$, the radius of the rings divided by their distance, is easy to determine, and the constants
\begin{align*}
 C_{15}(\sigma)= \int_{-1}^1 (u_{cat}+1)\big ( 1&+ (\sigma \partial_z u_{cat})^2 \big) \partial_r \psi_{cat} (z,u_{cat}+1) \,\mathrm{d} z 
\end{align*}
from \eqref{eqGN3}, and 
\begin{align*}
 C_{16}(\sigma)= \frac{1}{\min_{z \in [-1,1]} u_{cat}+1} \,, \qquad  \min_{z \in [-1,1]} u_{cat}= \frac{1}{\mathrm{cosh}(c)} -1\,
\end{align*}

may be accessible through numerical computations. \color{black}\\
{\bf(ii)} A consequence of the proof of Theorem \ref{MainResultGlobalNonexistence} is that, for given $\lambda > \lambda_{crit}(\sigma)$, there exists a uniform upper bound on the blow-up time $T_{max}$\,: Abbreviating the right-hand side of \eqref{eqGN4} by
\begin{align*}
-C_{17}(\sigma,\lambda):= \frac{\pi}{4}  \sigma \pi - 2  C_{16}(\sigma) \color{black}^2 - \frac{\sqrt{\lambda\,\pi\,} C_{15}(\sigma)}{4 \sqrt{2}}\,,
\end{align*}
we deduce from \eqref{eqGN4}, the fact that $u_0 \geq u_{cat}$, and the definition of $\mathcal{E}$ that
\begin{align*}
 \mathcal{E}(t) &= \mathcal{E}(0) + \int_0^t \frac{\mathrm{d}}{\mathrm{d} \tau} \mathcal{E}(\tau) \, \mathrm{d} \tau \\
 &\leq - \int_{-1}^1 \ln\big(u_{cat}(z)+1 \big )\,\mathrm{d} z - t \,C_{17}(\sigma,\lambda) \,, \qquad t \in [0,T_{max})\,.
\end{align*}
Now, using $\mathcal{E}(t) \geq -2 \ln(2)$, we find
\begin{align*}
 T_{max} \leq \left (2\ln(2)- \int_{-1}^1 \ln \big (u_{cat}(z)+1 \big )\,\mathrm{d} z \right ) \,C_{17}(\sigma,\lambda)^{-1}\,,
\end{align*}
where the right-hand side is independent of the initial value $u_0 \geq u_{cat}$. 
\end{bem}

\section*{Acknowledgement} This paper contains results and edited text from my PhD-thesis. I am very thankful to my PhD-supervisor Christoph Walker for suggesting the research topic and for his valuable advice.

\section{Appendix: Elliptic Regularity on Convex Domains}\label{ERCD}

The goal of this appendix is to give a new and detailed proof of an elliptic regularity result on bounded, convex domains from \cite[Theorem 3.10.1]{LU68} which has been used in Lemma \ref{selliptic3}: 

\begin{thm}\label{5}
Suppose that 
 \begin{itemize}
 \item $\Omega_0 \subset \mathbb{R}^n$ is convex, open and bounded,
 \item $A \in \big [ W^1_q(\Omega_0 ) \big ]^{n \times n}$, where $q >n$, is symmetric,
 \item there is $\alpha > 0$ with
 \begin{align*}
  \xi^T A(x) \xi \geq \alpha \vert \xi \vert^2 \,, \qquad x \in \overline{\Omega}_0 \,, \qquad \xi \in \mathbb{R}^n \,.
 \end{align*}
\end{itemize}
Then, for each $F \in L_2(\Omega_0)$, the problem 
\begin{align}
\begin{cases}
 \ -\mathrm{div} \big ( A(x)\nabla \phi \big)&= F \quad \text{in} \quad \Omega_0 \,, \\ 
 \ \ \qquad \qquad \quad \phi &=0 \quad \text{on} \quad \partial \Omega_0 \label{4.eq0}
  \end{cases}
\end{align}
has a unique solution $\phi \in W^2_{2,D}(\Omega_0)$. Moreover, there exists a constant $C$ depending only on $q$, $n$, $\Omega_0$, the $W^1_q$-norm of the coefficients of $A$ and the ellipticity constant $\alpha$ such that 
\begin{align}
 \Vert \phi \Vert_{W_2^2(\Omega_0)} \leq C \, \Vert F \Vert_{L_2(\Omega_0)} \,. \label{4.eq0b} 
\end{align}
\end{thm}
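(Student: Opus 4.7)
The plan is to first establish unique weak solvability in $W^1_{2,D}(\Omega_0)$ by the Lax--Milgram theorem, using symmetry and uniform ellipticity of $A$ together with Friedrichs' inequality on the bounded Lipschitz domain $\Omega_0$. This yields a unique $\phi \in W^1_{2,D}(\Omega_0)$ with $\Vert \phi\Vert_{W^1_2(\Omega_0)} \leq C \Vert F\Vert_{L_2(\Omega_0)}$, the constant depending only on $\alpha$, $\Vert A\Vert_\infty$ and $\Omega_0$; uniqueness in the smaller class $W^2_{2,D}(\Omega_0)$ is then immediate. The genuine difficulty lies in proving the $W^2_2$-regularity \eqref{4.eq0b}, since the standard smooth-boundary theory is inapplicable as $\partial \Omega_0$ is merely Lipschitz.

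My plan is to exploit convexity by approximation. I would fix an exhausting sequence of smooth bounded convex domains $(\Omega_k)$ with $\overline{\Omega_k} \subset \Omega_0$ and $\Omega_k \uparrow \Omega_0$ (e.g.\ obtained by mollifying the signed distance to $\partial \Omega_0$), extend $A$ to a neighborhood of $\overline{\Omega_0}$, and mollify it to $A_\varepsilon \in C^\infty$ preserving the ellipticity constant (up to $\varepsilon$) and uniformly bounded in $W^1_q$. On each pair $(\Omega_k, A_\varepsilon)$, classical theory (e.g.~\cite[Theorem 9.15]{GT98}) yields a smooth Dirichlet solution $\phi_{k,\varepsilon}$ with data $F$.

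The core step, and the main obstacle, is a $W^2_2$-bound for $\phi_{k,\varepsilon}$ that is uniform in $k$ and $\varepsilon$. I intend to derive it from a Miranda--Talenti type identity: for a smooth convex domain $\Omega$, smooth elliptic $A$, and $u \in C^2(\overline\Omega)$ vanishing on $\partial \Omega$, two integrations by parts yield
\begin{equation*}
\int_\Omega a_{ij}\, a_{kl}\,\partial_{ik}u\,\partial_{jl}u\,\mathrm{d}x = \int_\Omega \bigl(a_{ij}\partial_{ij}u\bigr)^2\,\mathrm{d}x + R(u,A) + B(u),
\end{equation*}
with $R(u,A)$ a volume contribution that is at most linear in $\nabla A$ and $\nabla u$, and $B(u)$ a boundary integral whose integrand, using $\nabla u = (\partial_\nu u)\,\nu$ on $\partial\Omega$, is a quadratic form in $\partial_\nu u$ weighted by $A$ and the second fundamental form of $\partial \Omega$. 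Convexity makes $B(u) \geq 0$, while uniform ellipticity bounds the left-hand side from below by $\alpha^2\Vert D^2 u\Vert_{L_2(\Omega)}^2$. Rewriting the principal part via $-a^\varepsilon_{ij}\partial_{ij}\phi_{k,\varepsilon} = F + (\partial_i a^\varepsilon_{ij})\partial_j\phi_{k,\varepsilon}$, this leads to
\begin{equation*}
\alpha^2 \Vert D^2 \phi_{k,\varepsilon}\Vert_{L_2(\Omega_k)}^2 \leq C\Vert F\Vert_{L_2(\Omega_0)}^2 + C\Vert \nabla A_\varepsilon\Vert_{L_q(\Omega_0)}^2 \Vert \nabla \phi_{k,\varepsilon}\Vert_{L_{2q/(q-2)}(\Omega_k)}^2 .
\end{equation*}
The delicate point is absorbing the last term, and here the assumption $q > n$ is decisive: it provides the embedding $W^1_{2,D}(\Omega_0) \hookrightarrow L_{2q/(q-2)}(\Omega_0)$ and, by interpolation with $W^2_2$, an estimate of the form $\Vert \nabla \phi\Vert_{L_{2q/(q-2)}} \leq \delta \Vert \phi\Vert_{W^2_2} + C_\delta \Vert \phi\Vert_{W^1_2}$, which combined with the Lax--Milgram energy bound of the first paragraph permits absorption and yields $\Vert \phi_{k,\varepsilon}\Vert_{W^2_2(\Omega_k)} \leq C\,\Vert F\Vert_{L_2(\Omega_0)}$ with $C$ independent of $k$ and $\varepsilon$.

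Finally, I would pass first $\varepsilon \to 0$ and then $k \to \infty$. By weak compactness in $W^2_2$ on each $\Omega_k$, together with uniqueness of the weak solution already established, the limits must coincide with the restriction of $\phi$ to $\Omega_k$. Hence $\phi \in W^2_{2,\mathrm{loc}}(\Omega_0)$ with $\Vert \phi\Vert_{W^2_2(\Omega_k)} \leq C \Vert F\Vert_{L_2(\Omega_0)}$ uniformly in $k$; monotone convergence then delivers $\phi \in W^2_{2,D}(\Omega_0)$ and \eqref{4.eq0b}.
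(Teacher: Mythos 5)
Your proposal is correct and reaches the same conclusion by a genuinely different route. The paper (following Grisvard) treats the constant-coefficient Miranda--Talenti inequality on smooth convex domains as a black box, then handles variable $W^1_q$-coefficients by a localization argument: freezing $A$ near each point, estimating the commutator via the H\"older embedding $W^1_q \hookrightarrow C^{1-n/q}$, and globalizing with a partition of unity. You instead propose integrating by parts directly with variable coefficients, arriving at a single global identity with remainder $R(u,A)$ and boundary term $B(u)$. This is cleaner in that it avoids the partition of unity altogether, and the absorption step you outline (H\"older with $\Vert\nabla A\Vert_{L_q}$, the interpolation estimate $\Vert\nabla\phi\Vert_{L_{2q/(q-2)}}\leq\delta\Vert\phi\Vert_{W^2_2}+C_\delta\Vert\phi\Vert_{W^1_2}$, and the Lax--Milgram energy bound) is essentially the same mechanism the paper packages into its Corollary on Sobolev embeddings for convex domains. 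The framing steps — unique weak solvability, exhaustion by smooth convex $\Omega_k$, mollification of $A$, uniform a priori estimate, and passage to the weak limit — coincide with the paper's.

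One point in your argument is stated too briefly and deserves to be made explicit, since it is the crux of the direct approach. Working out the boundary integrand with $u=0$ on $\partial\Omega$, so that $\nabla u=(\partial_\nu u)\nu$ and $\partial_{T_aT_b}u=-\mathrm{II}_{ab}\,\partial_\nu u$, the normal-normal and normal-tangential second derivatives cancel and one is left with $B(u)=\int_{\partial\Omega}(\partial_\nu u)^2\,\mathrm{II}_{ab}\bigl(A_{\nu\nu}A_{ab}-A_{\nu a}A_{\nu b}\bigr)\,\mathrm{d}\sigma$. The tensor $A_{\nu\nu}A_{ab}-A_{\nu a}A_{\nu b}$ is $A_{\nu\nu}$ times the Schur complement of $A_{\nu\nu}$ in $A$ restricted to the tangent space, hence positive semi-definite by ellipticity of $A$; pairing it against $\mathrm{II}\geq 0$ gives $B(u)\geq 0$, but this needs to be said rather than asserted via "convexity makes $B(u)\geq 0$." Similarly, your description of $R(u,A)$ as "linear in $\nabla A$ and $\nabla u$" omits the $D^2u$ factor — the integrand is of the schematic form $\nabla A\cdot\nabla u\cdot D^2u$ — though your subsequent absorption inequality is written as if you knew this, so it is merely a slip of wording. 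With these two points made precise, the argument is sound and yields the same constant-dependence as the paper's proof.
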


We note that, thanks to the Riesz Representation Theorem, problem \eqref{4.eq0} has a unique weak solution $\phi \in W^1_{2,D}(\Omega_0)$. 

The difficult part is to improve its regularity because such an improvement of regularity is usually derived for bounded $C^2$-domains, see for example \cite[Theorem 6.3.4]{Evans10}. 
But convex domains as assumed in Theorem \ref{5} are merely Lipschitz domains \cite[Corollary 1.2.2.3]{Grisvard85}, for which, in general, an improvement of regularity may even fail. \\

In \cite[Theorem 3.2.1.2]{Grisvard85} the special case $q=\infty$ of Theorem \ref{5} is proven. The argument is based on an approximation of $\Omega_0$ from the inside by a sequence of smooth convex domains $(\Omega_m)$ on which the elliptic problem possesses a sequence of unique $W^2_2$-solutions $(\phi_m)$. 
Furthermore, using the convexity of the domains, a remarkable $W^2_2$-a-priori estimate for $\phi_m$ independent of $m$ is derived, which allows the author to extract a subsequence of $(\phi_m)$ converging to a $W^2_2$- solution to the original problem on $\Omega_0$. We derive an improved $W^2_2$-a-priori-estimate to combine the idea of domain approximation from \cite{Grisvard85} with an approximation of the coefficients of $A(\, \cdot \,)$ to reduce the assumption from $q=\infty$ to $q > n$ and thus to give a new proof of Theorem \ref{5}. \\

An important ingredient in our proof is Sobolev's embedding theorem on convex domains as it allows to characterize the embedding constant solely by the volume and diameter of the domain. 

\begin{prop}\label{-1}
Let $\Omega$ be a convex, bounded and open subset of $\mathbb{R}^n$, let $q >n$. Then, there exists a constant $C >0$ depending only on $q$, $n$, the diameter $\mathrm{diam}(\Omega)$ and the volume $\vert \Omega \vert$ such that
\begin{align*}
 \Vert v \Vert_{L_{\frac{2q}{q-2}}(\Omega)} \leq C \, \Vert v \Vert_{W^1_2(\Omega)} \,, \qquad v \in W^1_2(\Omega)\,.
\end{align*}
\end{prop}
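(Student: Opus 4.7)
The plan is to combine a classical integral representation valid on convex domains with Young's convolution inequality. First I would dispose of the trivial case $n=1$: then $\Omega$ is an interval whose length is at most $\mathrm{diam}(\Omega)$, the embedding $W^1_2 \hookrightarrow L_\infty$ has a constant depending only on that length, and $L_\infty(\Omega) \hookrightarrow L_p(\Omega)$ for every finite $p$ with constant $|\Omega|^{1/p}$. So I may assume $n \geq 2$ in what follows.

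The core of the argument is the standard Poincaré-type representation for convex sets. For $v \in C^\infty(\overline{\Omega})$ and $x \in \Omega$ one has
\begin{equation*}
v(x) - v_\Omega
= \frac{1}{|\Omega|} \int_\Omega \int_0^1 \nabla v\bigl(x + t(y-x)\bigr) \cdot (y-x)\, \mathrm{d}t \, \mathrm{d}y,
\end{equation*}
where $v_\Omega := |\Omega|^{-1}\int_\Omega v\, \mathrm{d}y$ and the line segment from $x$ to $y$ lies in $\Omega$ by convexity. Switching to polar coordinates centred at $x$ and performing the change of variables $z = x + t(y-x)$ yields the pointwise estimate
\begin{equation*}
|v(x) - v_\Omega|
\leq \frac{d^n}{n\,|\Omega|} \int_\Omega \frac{|\nabla v(z)|}{|x-z|^{n-1}}\, \mathrm{d}z,
\end{equation*}
with $d := \mathrm{diam}(\Omega)$. (See e.g.\ \cite[Lemma 7.16]{GT98} for essentially the same identity in the form attributed to Morrey.)

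Next I would treat the right-hand side as a convolution with the kernel $K(z) := |z|^{-(n-1)} \chi_{B_d(0)}(z)$, which belongs to $L_s(\mathbb{R}^n)$ for every $s \in [1, n/(n-1))$ with norm controlled by $n$ and $d$. For the target exponent $p := 2q/(q-2)$, the Young exponent $s$ determined by $1/s + 1/2 = 1 + 1/p$ equals $q/(q-1)$, and a short calculation shows that $q/(q-1) < n/(n-1)$ is equivalent to $q > n$. Young's inequality therefore gives
\begin{equation*}
\|v - v_\Omega\|_{L_p(\Omega)}
\leq \frac{d^n}{n\,|\Omega|}\, \|K\|_{L_s(\mathbb{R}^n)}\, \|\nabla v\|_{L_2(\Omega)}
\leq C(q,n,d,|\Omega|)\, \|v\|_{W^1_2(\Omega)}.
\end{equation*}
Finally, Hölder's inequality bounds $\|v_\Omega\|_{L_p(\Omega)} = |v_\Omega|\,|\Omega|^{1/p} \leq |\Omega|^{1/p - 1/2}\,\|v\|_{L_2(\Omega)}$, and adding this to the previous inequality, together with the density of $C^\infty(\overline{\Omega})$ in $W^1_2(\Omega)$ (which holds since convex bounded domains are Lipschitz by \cite[Corollary 1.2.2.3]{Grisvard85}), delivers the claim.

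The only genuinely delicate point is the passage from the smooth representation formula to arbitrary $W^1_2$-functions; everything else is a careful bookkeeping of the exponents so that the final constant indeed depends solely on $n$, $q$, $d$, and $|\Omega|$, as required for its later use in the $m$-independent a priori bound driving the proof of Theorem \ref{5}.
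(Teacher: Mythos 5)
Your argument is correct, and it differs from the paper's in a substantive way: the paper disposes of Proposition \ref{-1} by citing \cite{MTSO17} as a special case, whereas you reconstruct the result from classical ingredients. You start from the Morrey-type pointwise representation on convex sets, $\vert v(x)-v_\Omega\vert \le \tfrac{d^n}{n\vert\Omega\vert}\int_\Omega \vert\nabla v(z)\vert\,\vert x-z\vert^{1-n}\,\mathrm{d}z$ (cf.\ \cite[Lemma 7.16]{GT98}), which already packages the geometric dependence into the factor $d^n/(n\vert\Omega\vert)$, and then invoke Young's convolution inequality with the truncated Riesz kernel $K(z)=\vert z\vert^{-(n-1)}\chi_{B_d(0)}(z)$. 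The exponent bookkeeping is right: with $p=2q/(q-2)$, the Young relation $1/s+1/2=1+1/p$ forces $s=q/(q-1)$, and the integrability requirement $s<n/(n-1)$ for $K\in L_s$ is exactly $q>n$; moreover $\Vert K\Vert_{L_s}$ depends only on $n,q,d$, so the final constant depends on precisely $n,q,\mathrm{diam}(\Omega),\vert\Omega\vert$ as claimed. The mean-value piece $\Vert v_\Omega\Vert_{L_p}\le\vert\Omega\vert^{1/p-1/2}\Vert v\Vert_{L_2}$ and the density of $C^\infty(\overline\Omega)$ in $W^1_2(\Omega)$ (convex bounded domains being Lipschitz) close the argument. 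Compared with citing \cite{MTSO17}, your route has the merit of making explicit \emph{why} the constant has this particular dependence, which is exactly the feature that later powers the $m$-independent a priori bound in Proposition \ref{3}. Two cosmetic remarks: your representation formula is missing a minus sign in front of the double integral, which is harmless since you immediately pass to absolute values; and in the $n=1$ case the diameter \emph{equals} $\vert\Omega\vert$ for an interval (not merely bounds it), though nothing in the argument hinges on that.
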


\begin{proof}
This is a special case of \cite{MTSO17}.
\end{proof}

A direct consequence of Sobolev's embedding theorem on convex domains is the following: 

\begin{cor}\label{0}
Let $\Omega$ be a convex, bounded and open subset of $\mathbb{R}^n$. Let $q > n$ and $\delta > 0$. Then, there exists a constant $C >0$ depending only on $q, n,$ \color{black} $\delta$, $\mathrm{diam}(\Omega)$ and $\vert\Omega\vert$ such that 
\begin{align}
\Vert v \Vert_{L_{\frac{2q}{q-2}}(\Omega)}^2 \leq \delta \Vert v \Vert_{W_2^1(\Omega)}^2 +C \,\Vert v \Vert_{L_2(\Omega)}^2 \label{0.eq1}
\end{align}
for all $v \in W^1_2(\Omega)$.
\end{cor}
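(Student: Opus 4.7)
The plan is to deduce \eqref{0.eq1} from Proposition \ref{-1} by combining log-convexity of Lebesgue norms with Young's inequality. First, I would observe that the map $r \mapsto 2r/(r-2)$ is strictly decreasing on $(2,\infty)$; since $q > n$, I can therefore fix an auxiliary exponent $\tilde{q} \in (n,q)$ depending only on $q$ and $n$ (for concreteness $\tilde{q}:=(q+n)/2$) so that $\tilde{p}:=2\tilde{q}/(\tilde{q}-2)$ strictly exceeds $p:=2q/(q-2)$, while $\tilde{q} > n$ keeps Proposition \ref{-1} applicable. The proposition then yields a constant $\tilde{C}$ depending only on $q$, $n$, $\mathrm{diam}(\Omega)$ and $\vert\Omega\vert$ such that $\Vert v \Vert_{L_{\tilde{p}}(\Omega)} \leq \tilde{C}\, \Vert v \Vert_{W^1_2(\Omega)}$ for all $v \in W^1_2(\Omega)$.

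Next, I would interpolate: since $2 < p < \tilde{p}$, log-convexity of $L^r$-norms (equivalently H\"older's inequality) provides some $\theta = \theta(q,n) \in (0,1)$ with
\begin{align*}
\Vert v \Vert_{L_p(\Omega)} \leq \Vert v \Vert_{L_2(\Omega)}^{1-\theta}\, \Vert v \Vert_{L_{\tilde{p}}(\Omega)}^{\theta} \leq \tilde{C}^{\theta}\, \Vert v \Vert_{L_2(\Omega)}^{1-\theta}\, \Vert v \Vert_{W^1_2(\Omega)}^{\theta}\,.
\end{align*}
Squaring and then applying Young's inequality with conjugate exponents $1/\theta$ and $1/(1-\theta)$ to the product $\Vert v \Vert_{L_2(\Omega)}^{2(1-\theta)} \Vert v \Vert_{W^1_2(\Omega)}^{2\theta}$ immediately produces a decomposition of the form $\delta \Vert v \Vert_{W^1_2(\Omega)}^2 + C\, \Vert v \Vert_{L_2(\Omega)}^2$ for some constant $C = C(q,n,\delta,\mathrm{diam}(\Omega),\vert\Omega\vert)$, which is precisely \eqref{0.eq1}.

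I do not expect any serious obstacle here. The only non-routine ingredient is the quantitative Sobolev embedding whose constant is controlled purely by $q$, $n$, diameter and volume of the convex domain, and this is already provided by Proposition \ref{-1}. The remaining steps -- fixing $\tilde{q}$ as an explicit function of $q$ and $n$, log-convexity of Lebesgue norms, and Young's inequality -- are elementary and transparently preserve the required dependence of the final constant on the listed quantities.
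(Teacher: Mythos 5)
Your proof is correct and follows essentially the same route as the paper: the paper fixes $\varepsilon\in(0,q-n)$ and interpolates between $L_2$ and $L_{\frac{2(q-\varepsilon)}{(q-\varepsilon)-2}}$ via log-convexity, then applies Proposition \ref{-1} (with $q-\varepsilon$ in place of $q$) and the weighted Young's inequality, which is exactly your argument with the concrete choice $\tilde q=(q+n)/2$ playing the role of $q-\varepsilon$. No issues.
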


\begin{proof}
 For fixed $\varepsilon \in (0, q-n)$, we deduce from 
\begin{align*}
 2 < \frac{2q}{q-2} < \frac{2(q-\varepsilon)}{(q- \varepsilon) -2}
\end{align*}
and Hölder's inequality that
\begin{align*}
 \Vert v \Vert^2_{L_{\frac{2q}{q-2}}(\Omega)} \leq \Vert v \Vert^{2(1-\theta)}_{L_2(\Omega)}\, \Vert v \Vert^{2\theta}_{L_{\frac{2(q-\varepsilon)}{(q-\varepsilon)-2}}(\Omega)} 
\end{align*}
for suitable $\theta =\theta(q,n)\in (0,1)$ and all $v \in W^1_2(\Omega)$. Now an application of Proposition~\ref{-1} together with the weighted Young's inequality completes the proof of \eqref{0.eq1}.
\end{proof}

We can now turn to the improved a-priori estimate on smooth convex domains (here that means: $\partial \Omega \in C^2$). The starting point is the following result from \cite{Grisvard85}:

\begin{prop}\label{2}
Let $\Omega$ be a convex, bounded open subset of $\mathbb{R}^n$ with a $C^2$-boundary, and $A$ a symmetric $n \times n$-matrix with each eigenvalue larger than $\alpha >0$. Then, there exists a constant $C_{18}$ depending only on the diameter of $\Omega$ and $\alpha$ such that 
\begin{align}
 \Vert \phi \Vert_{W^2_2(\Omega)} \leq C_{18} \, \big \Vert \mathrm{div} \big (A \nabla  \phi \big ) \big \Vert_{L_2(\Omega)} \,, \qquad \phi \in W^2_2(\Omega ) \cap W^1_{2,D}(\Omega) \,. \label{2.eq1}
\end{align}
\end{prop}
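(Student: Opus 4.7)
Since $A$ is a constant positive definite matrix, I would first reduce to the Laplacian on a linearly transformed convex smooth domain, then invoke a classical Dirichlet integration-by-parts identity whose boundary contribution has a favourable sign on convex domains, and finally recover the lower-order norms via Poincar\'e's inequality.

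\textbf{Step 1: Reduction to the Laplacian.} Let $B := A^{1/2}$, set $\tilde{\Omega} := B^{-1}\Omega$ and $\tilde{\phi}(y) := \phi(By)$. Then $\tilde{\Omega}$ is still open, bounded and of class $C^2$, and it remains convex because linear maps preserve convexity. A direct computation yields $\Delta_y \tilde{\phi}(y) = \mathrm{div}_x(A\nabla\phi)(x)$ together with $\tilde{\phi}|_{\partial \tilde{\Omega}} = 0$. Hence it suffices to establish the analogous estimate for the Laplacian on $\tilde{\Omega}$.

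\textbf{Step 2: Identity on smooth convex Dirichlet domains.} For $u \in C^\infty(\overline{\tilde{\Omega}})$ with $u|_{\partial\tilde{\Omega}} = 0$, I would integrate $(\Delta u)^2$ by parts twice to obtain
\begin{align*}
\int_{\tilde{\Omega}} (\Delta u)^2 \, \mathrm{d} y = \int_{\tilde{\Omega}} |D^2 u|^2 \, \mathrm{d} y + \int_{\partial\tilde{\Omega}} \Big[(\Delta u)(\partial_\nu u) - \tfrac{1}{2}\partial_\nu |\nabla u|^2 \Big] \mathrm{d}\sigma.
\end{align*}
The Dirichlet condition gives $\nabla u = (\partial_\nu u)\nu$ on $\partial\tilde{\Omega}$, so that the tangential components drop out; a careful use of the local boundary geometry then collapses the boundary integrand to $\mathrm{Tr}(\mathrm{II})(\partial_\nu u)^2$, where $\mathrm{II}$ denotes the second fundamental form of $\partial\tilde{\Omega}$. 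Convexity of $\tilde{\Omega}$ gives $\mathrm{Tr}(\mathrm{II}) \geq 0$, whence
\begin{align*}
\int_{\tilde{\Omega}} |D^2 u|^2 \, \mathrm{d} y \leq \int_{\tilde{\Omega}} (\Delta u)^2 \, \mathrm{d} y,
\end{align*}
and a standard density argument extends the estimate to every $u \in W^2_2(\tilde{\Omega}) \cap W^1_{2,D}(\tilde{\Omega})$.

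\textbf{Step 3: Transfer back and lower-order terms.} Reverting the change of variables, the Jacobians cancel in both integrals, while $(D_y^2 \tilde{\phi})_{ij} = \sum_{k,l} B_{ik} B_{jl} (D_x^2 \phi)_{kl}$ yields $|D_y^2 \tilde{\phi}|^2 = \mathrm{tr}\big((A D^2 \phi)^2\big)$. Writing $A = \alpha I + R$ with $R$ symmetric positive semidefinite and expanding, one checks $\mathrm{tr}\big((AM)^2\big) \geq \alpha^2 |M|^2$ for every symmetric $M$, which combined with Step 2 gives
\begin{align*}
\alpha^2 \int_{\Omega} |D^2 \phi|^2 \, \mathrm{d} x \leq \int_{\Omega} \big(\mathrm{div}(A\nabla\phi)\big)^2 \, \mathrm{d} x.
\end{align*}
To pick up the missing $L_2$- and $W^1_2$-norms, I would test $-\mathrm{div}(A\nabla\phi) =: F$ against $\phi$ itself and combine the resulting energy estimate (which uses $A \geq \alpha I$) with Poincar\'e's inequality on $\Omega$, whose constant is controlled solely by $\mathrm{diam}(\Omega)$. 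This produces $\Vert \phi \Vert_{W^1_2(\Omega)} \leq C(\alpha, \mathrm{diam}\,\Omega) \Vert F \Vert_{L_2(\Omega)}$, and together with the Hessian bound above it delivers \eqref{2.eq1} with $C_{18}$ depending only on $\alpha$ and $\mathrm{diam}(\Omega)$, as required.

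\textbf{Main obstacle.} The crux is Step 2: the algebraic boundary integration by parts is routine, but identifying the residual surface integrand with $\mathrm{Tr}(\mathrm{II})(\partial_\nu u)^2$ demands a careful use of the Dirichlet condition, via $\nabla u = (\partial_\nu u)\nu$, together with the second-order geometry of $\partial\tilde{\Omega}$. It is exactly here that convexity enters, producing the good sign $\mathrm{Tr}(\mathrm{II}) \geq 0$ that closes the estimate; without it, the boundary term could have arbitrary sign and the bound would fail.
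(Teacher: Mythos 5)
Your proposal is correct and follows essentially the same route as the paper, which merely cites the first step of the proof of Lemma 3.1.3.2 in Grisvard's book: reduce the constant-coefficient operator to the Laplacian by the linear change of variable $y = B^{-1}x$ with $B = A^{1/2}$ (preserving convexity and $C^2$ regularity of the domain), invoke the Kadlec/Reilly-type integration-by-parts identity whose boundary term collapses to $\int_{\partial\tilde\Omega}\mathrm{Tr}(\mathrm{II})(\partial_\nu u)^2\,d\sigma \geq 0$ on convex domains, and recover the lower-order norms via the energy estimate and Poincar\'e. The only point worth stressing is that the Hessian bound $\int|D^2 u|^2 \leq \int(\Delta u)^2$ is constant-free, so pulling it back and tracking $\mathrm{tr}\bigl((AM)^2\bigr) \geq \alpha^2|M|^2$ as you do, together with testing on the original domain $\Omega$, correctly yields a constant depending only on $\alpha$ and $\mathrm{diam}(\Omega)$, matching the statement.
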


\begin{proof} 
This follows from the first step in the proof of \cite[Lemma 3.1.3.2]{Grisvard85} by carefully analysing the appearing constants.
\end{proof}

Next, we aim at extending the improved a-priori estimate \eqref{2.eq1} to elliptic operators with variable coefficients. Our result is a generalization of the a-priori estimate in~\cite[Lemma 3.1.3.2, Theorem 3.1.3.1]{Grisvard85}. As in \cite[Lemma 3.1.3.2]{Grisvard85}, we treat variable coefficient operators locally as a perturbation of constant coefficient operators.
Our new ingredient is the Sobolev's embedding theorem for convex domains. It allows us to formulate an a-priori estimate in which the constant does not depend on the $W^1_\infty$-norm of the coefficient matrix $A(\,\cdot\,)$ as in \cite{Grisvard85}, but only on the $W^1_q$-norm of this matrix for $q >n$.

\begin{prop}\label{3}
 Suppose that 
 \begin{itemize}
 \item $\Omega_0 \subset \mathbb{R}^n$ is convex, open and bounded,
 \item $q > n$,
 \item $A\in \big [ C^\infty( \overline{\Omega}_0 ) \big ]^{n \times n}$ is symmetric,
 \item there is $\alpha > 0$ with
 \begin{align*}
  \xi^T A(x) \xi \geq \alpha \vert \xi \vert^2 \,, \qquad x \in \overline{\Omega}_0 \,, \qquad \xi \in \mathbb{R}^n \,.
 \end{align*}
\end{itemize}
Then, there exists a constant $C$ depending only on the $W^1_q$-norm of the coefficients of $A$, the ellipticity constant $\alpha$ and $\Omega_0$ such that for each convex and open $\Omega \subset \Omega_0$ with $C^2$-boundary and $\vert \Omega \vert \geq \frac{1}{2} \vert \Omega_0 \vert$ the estimate
 \begin{align}
  \Vert \phi\Vert_{W_2^2(\Omega)} \leq C \, \big \Vert \mathrm{div} \big (A(\,\cdot\,) \nabla \phi \big  ) \big \Vert_{L_2(\Omega)}\,,  \qquad \phi \in W^2_2(\Omega ) \cap W^1_{2,D}(\Omega)\,, \label{3.eq0}
 \end{align}
 holds.
\end{prop}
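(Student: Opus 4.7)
The plan is to reduce the variable-coefficient estimate to the constant-coefficient one from Proposition~\ref{2} via a freezing-of-coefficients argument on a partition of unity over small balls, and to control the resulting error by combining Hölder continuity of $A$ (coming from $W_q^1(\Omega_0) \hookrightarrow C^{0,\gamma}(\overline{\Omega}_0)$ with some $\gamma = \gamma(q,n) > 0$) with the Ehrling-type inequality of Corollary~\ref{0}. This is essentially the strategy of \cite[Lemma 3.1.3.2]{Grisvard85}; the new ingredient is that the usual $L_\infty$-control of $\nabla A$ is replaced by merely $L_q$-control, and Corollary~\ref{0} is what makes this replacement possible.

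First I would fix a radius $r > 0$ (to be chosen later) and cover $\overline{\Omega}_0$ by finitely many balls $B_j := B(x_j, r)$, $1 \leq j \leq N = N(r)$, together with a subordinate smooth partition of unity $(\eta_j)_{j=1}^N$. Set $A_j := A(x_j)$; these are constant symmetric matrices still satisfying the ellipticity condition with the same $\alpha$. For $\phi \in W_2^2(\Omega) \cap W_{2,D}^1(\Omega)$, each $\eta_j \phi$ also lies in $W_2^2(\Omega) \cap W_{2,D}^1(\Omega)$ and is supported in $B_j \cap \overline{\Omega}$. Since $\Omega$ is convex with $C^2$-boundary and $\mathrm{diam}(\Omega) \leq \mathrm{diam}(\Omega_0)$, Proposition~\ref{2} applied on $\Omega$ with the constant matrix $A_j$ gives
\begin{align*}
\|\eta_j \phi\|_{W_2^2(\Omega)} \leq C_{18} \, \|\mathrm{div}(A_j \nabla (\eta_j \phi))\|_{L_2(\Omega)},
\end{align*}
with $C_{18}$ depending only on $\mathrm{diam}(\Omega_0)$ and $\alpha$.

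The heart of the argument is to rewrite
\begin{align*}
\mathrm{div}(A_j \nabla(\eta_j \phi)) = \eta_j \, \mathrm{div}(A \nabla \phi) + R_j^{\eta} + R_j^{\mathrm{pert}},
\end{align*}
where $R_j^{\eta}$ collects all commutator terms produced by the product rule between $\mathrm{div}(A\nabla\cdot)$ and multiplication by $\eta_j$ (containing only $\phi$ or $\nabla\phi$ multiplied by $A$, $\nabla A$, $\nabla \eta_j$ or $\nabla^2 \eta_j$), and
\begin{align*}
R_j^{\mathrm{pert}} = \sum_{i,k} (a_{ik,j} - a_{ik}) \, \partial_i \partial_k (\eta_j \phi) - \sum_{i,k} (\partial_i a_{ik}) \, \partial_k(\eta_j \phi)
\end{align*}
is the perturbation from freezing. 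Now $\|R_j^{\eta}\|_{L_2(\Omega)} \leq C(r) \|\phi\|_{W_2^1(\Omega)}$, and the first summand in $R_j^{\mathrm{pert}}$ is controlled by
\begin{align*}
\|(a_{ik,j} - a_{ik}) \partial_i \partial_k (\eta_j \phi)\|_{L_2(\Omega)} \leq \|A - A_j\|_{L_\infty(B_j)} \, \|\eta_j \phi\|_{W_2^2(\Omega)} \leq C\, r^\gamma \, \|\eta_j \phi\|_{W_2^2(\Omega)}
\end{align*}
using Hölder continuity of $A$. The delicate term is the second summand, for which Hölder's inequality yields
\begin{align*}
\|(\partial_i a_{ik}) \partial_k (\eta_j \phi)\|_{L_2(\Omega)} \leq \|\nabla A\|_{L_q(\Omega_0)} \, \|\nabla(\eta_j \phi)\|_{L_{2q/(q-2)}(\Omega)},
\end{align*}
and the critical Sobolev-type factor is handled via Corollary~\ref{0} by
\begin{align*}
\|\nabla(\eta_j \phi)\|_{L_{2q/(q-2)}(\Omega)}^2 \leq \delta \, \|\eta_j \phi\|_{W_2^2(\Omega)}^2 + C(\delta)\, \|\eta_j \phi\|_{W_2^1(\Omega)}^2,
\end{align*}
with $C(\delta)$ uniform in $\Omega$ thanks to $|\Omega| \geq |\Omega_0|/2$ and $\mathrm{diam}(\Omega) \leq \mathrm{diam}(\Omega_0)$.

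Squaring the initial inequality, summing over $j$, and choosing first $r$ (to beat the $r^\gamma$ factor) and then $\delta$ small enough, the $\|\eta_j \phi\|_{W_2^2}^2$-contributions on the right can be absorbed into the left-hand side, leaving
\begin{align*}
\|\phi\|_{W_2^2(\Omega)} \leq C_1 \|\mathrm{div}(A \nabla \phi)\|_{L_2(\Omega)} + C_2 \|\phi\|_{W_2^1(\Omega)}
\end{align*}
with $C_1, C_2$ of the advertised dependence. The $W_2^1$-term is then removed by the standard weak-solution estimate: testing the equation with $\phi$, ellipticity yields $\alpha \|\nabla \phi\|_{L_2}^2 \leq \|\mathrm{div}(A \nabla \phi)\|_{L_2}\,\|\phi\|_{L_2}$, and Friedrich's inequality on $\Omega$ (with constant controlled by $\mathrm{diam}(\Omega_0)$) gives $\|\phi\|_{W_2^1(\Omega)} \leq C\,\|\mathrm{div}(A \nabla \phi)\|_{L_2(\Omega)}$, whence \eqref{3.eq0}. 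The main obstacle I expect is the careful bookkeeping required to keep every constant uniform in $\Omega$; this is exactly where both geometric hypotheses $\mathrm{diam}(\Omega) \leq \mathrm{diam}(\Omega_0)$ and $|\Omega| \geq |\Omega_0|/2$ enter, ensuring that the constants in Proposition~\ref{2} and Corollary~\ref{0} do not degenerate as $\Omega$ varies.
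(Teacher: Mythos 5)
Your proposal is correct and follows essentially the same route as the paper: freezing coefficients over small balls, invoking Proposition~\ref{2} for the frozen constant-coefficient operator, bounding the perturbation via $W^1_q(\Omega_0)\hookrightarrow C^s(\overline{\Omega}_0)$ together with Hölder's inequality and Corollary~\ref{0}, absorbing, and eliminating the $W^1_2$-remainder via ellipticity and Friedrich's inequality, with all constants uniform in $\Omega$ because $\mathrm{diam}(\Omega)\leq\mathrm{diam}(\Omega_0)$ and $\vert\Omega\vert\geq\vert\Omega_0\vert/2$. The only organizational difference is that the paper first derives a self-contained local estimate for functions supported in a small ball (step (i), culminating in \eqref{3.eq5}) and then applies it to $\theta_i\phi$ while treating the cutoff commutators $I,II,III$ separately (step (ii)), whereas you merge the freezing error $R_j^{\mathrm{pert}}$ and the cutoff commutators $R_j^{\eta}$ into a single decomposition of $\mathrm{div}(A_j\nabla(\eta_j\phi))$ and remove the $W^1_2$-norm once at the end; the underlying estimates are identical.
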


\begin{proof} {\bf(i)} {\it Local Estimate:} \\
Near fixed $x_0 \in \overline{\Omega}_0$, our first goal is to prove a local version of \eqref{3.eq0}. To this end, we interpret the operator locally as a perturbation of the constant coefficient operator
 $-\mathrm{div} \big (A(x_0)\nabla \phi \big )$ treated in Proposition~\ref{2}. Assume that $ \phi \in W^1_{2,D}(\Omega)$ with support contained in $\mathbb{B}(x_0,r) \cap \Omega$ 
 (where $r >0$ will be determined later). Writing $A(x_0)= [a_{ij}(x_0)]$ and $A(x) = [a_{ij}(x)]$ we deduce from
  \begin{align*}
  \mathrm{div} \big (A(x_0)\nabla \phi \big )&-\mathrm{div} \big (A(x)\nabla \phi \big ) \\
  &= \sum_{i,j=1}^n \big ( a_{ij}(x_0)-a_{ij}(x) \big ) \partial_i \partial_j \phi - \sum_{i,j=1}^n \partial_i a_{ij}(x) \partial_j \phi
 \end{align*}
 that 
 \begin{align*}
 \big \vert \mathrm{div} \big (A(x_0)\nabla \phi \big )&-\mathrm{div} \big (A(x)\nabla \phi \big )\big \vert^2 \\
 &\leq C (n) \Big ( \sum_{i,j=1}^n \vert a_{ij}(x_0) - a_{ij}(x) \vert^2 \, \vert \partial_{i}\partial_j \phi \vert^2 + \sum_{i,j=1}^n \vert \partial_i a_{ij}(x) \vert^2 \, \vert \partial_j \phi \vert^2 \Big )\,.
 \end{align*}
Integrating with respect to $x\in\Omega$ and using that $W^1_q(\Omega_0) \hookrightarrow C^s(\overline{\Omega}_0)$ with $s = 1- n/q>0$, we get
\begin{align}
 \big \Vert \mathrm{div} \big (A(x_0)\nabla \phi \big )&-\mathrm{div} \big (A(\,\cdot\,)\nabla \phi \big )\big \Vert_{L_2(\Omega)}^2 \notag \\
 &\leq C \big ( n, \Vert A \Vert_{W^1_q(\Omega_0)} ,  \Omega_0 \color{black} \big ) \bigg ( r^{2s} \, \Vert \phi \Vert^2_{W_2^2(\Omega)}  + \sum_{i,j=1}^n \int_{\Omega} \vert \partial_i a_{ij} \vert^2 \, \vert \partial_j \phi \vert^2 \, \mathrm{d} x \bigg )\,. \label{3.eq1}
\end{align}
Applying Hölder's inequality with exponents $\frac{2}{q} + \frac{q-2}{q}=1$ together with Corollary~\ref{0} to the second term in \eqref{3.eq1} gives
\begin{align}
 \int_{\Omega} \vert \partial_i a_{ij} \vert^2 \, \vert \partial_j \phi \vert^2 \, \mathrm{d} x &\leq \Vert a_{ij} \Vert_{W^1_q(\Omega)}^2 \,\Vert \partial_j \phi\Vert_{L_\frac{2q}{q-2}(\Omega)}^2\notag \\
 &\leq  C \big (\Vert A \Vert_{W^1_q(\Omega_0)}, q, n , \Omega_0 \big) \Big ( \delta \,\Vert \partial_j \phi \Vert_{W_2^1(\Omega)}^2 + C (\delta)  \,\Vert \partial_j \phi \Vert^2_{L_2(\Omega)} \Big ) \notag
\end{align}
for each $\delta > 0$. Plugging this back into \eqref{3.eq1} yields
\begin{align}
\big \Vert \mathrm{div} \big (A(x_0)\nabla \phi\big )&-\mathrm{div} \big (A(\,\cdot\,)\nabla \phi \big )\big \Vert_{L_2(\Omega)}^2 \notag \\
&\leq  C \big ( \Vert A \Vert_{W^1_q(\Omega_0)}  , n,q , \Omega_0\big) \color{black} \Big ( (r^{2s}+ \delta) \, \Vert \phi \Vert^2_{W_2^2(\Omega)} 
+ C (\delta) \Vert \phi \Vert^2_{W^1_2(\Omega)} \Big )\,.\label{3.eq3}
\end{align}
Now we infer from Proposition~\ref{2}, the triangle inequality and \eqref{3.eq3} that there exists a constant $C_{19}= C_{19} \big ( \Vert A \Vert_{W^1_q(\Omega_0)},q,n,\Omega_0 \big )$ with
\begin{align*}
 \Vert \phi \Vert^2_{W^2_2(\Omega)} 
 &\leq 2  \, C_{18}^2 \,\color{black} \Big ( \big \Vert \mathrm{div} \big (A(\, \cdot \,) \nabla \phi \big ) \big \Vert_{L_2(\Omega)}^2+\big \Vert \mathrm{div} \big (A(x_0) \nabla \phi \big )-\mathrm{div} \big (A(\, \cdot \,) \nabla \phi \big ) \big \Vert_{L_2(\Omega)}^2 \Big )\\
 &\leq C_{19}\,\Big ( \big \Vert \mathrm{div} \big (A(\, \cdot \,) \nabla \phi \big ) \big \Vert_{L_2(\Omega)}^2+(r^{2s}+ \delta) \, \Vert \phi \Vert^2_{W_2^2(\Omega)} 
+ C (\delta) \Vert \phi \Vert^2_{W^1_2(\Omega)} \Big )\,.
\end{align*}
Choosing $r$ and $\delta >0$ with
$$( r^{2s} +\delta ) \leq  \frac{1}{2\,  C_{19} }\,, $$
we arrive at
\begin{align}
 \Vert \phi \Vert^2_{W^2_2(\Omega)} &\leq C\big (\Vert A \Vert_{W^1_q(\Omega_0)}, q,n, \Omega_0 \big )\, \Big (  \big \Vert \mathrm{div} \big (A(\,\cdot\,)\nabla \phi \big ) \big \Vert_{L_2(\Omega)}^2  + \Vert \phi \Vert_{W_2^1(\Omega)}^2 \Big ) \label{3.eq4}
\end{align}
for all $\phi \in W^2_{2,D}(\Omega)$ with support contained in $\mathbb{B}(x_0,r) \cap \Omega$.
Finally, it follows from Friedrich's inequality, Gauss's theorem and the weighted Young's inequality that
\begin{align*}
 \Vert \phi\Vert^2_{W^1_2(\Omega)} &\leq C \big ( \mathrm{diam}(\Omega_0) \big ) \Vert \nabla \phi \Vert^2_{L_2(\Omega)} \\
 &\leq \tfrac{1}{\alpha}\, C \big ( \mathrm{diam}(\Omega_0) \big ) \int_\Omega \nabla \phi^T A(x) \nabla \phi \, \mathrm{d} x \\
 &\leq  C \big ( \mathrm{diam}(\Omega_0), \alpha \big ) \,\big \Vert \mathrm{div}\big(A(\,\cdot\,)\nabla \phi\big) \Vert^2_{L_2(\Omega)}+ \tfrac{1}{2} \Vert \phi \Vert_{W_2^1(\Omega)}^2\,,
\end{align*}
so that we can eliminate the $W^1_2$-norm of $\phi$ on the right-hand side of \eqref{3.eq4} and get
\begin{align}
 \Vert \phi \Vert^2_{W^2_2(\Omega)} \leq C \,  \big \Vert \mathrm{div} \big (A(\,\cdot\,)\nabla \phi  \big ) \big \Vert_{L_2(\Omega)}^2  \label{3.eq5}
\end{align}
for all $\phi  \in W^1_{2,D}(\Omega)$ with support contained in $\mathbb{B}(x_0,r) \cap \Omega$ where $C$ and also $r$ depend only on $q$, $n$ \color{black}, $\Vert A \Vert_{W^1_q(\Omega_0)}$, the ellipticity constant $\alpha$, and $\Omega_0$ (but not on $\Omega$).\\

{\bf(ii)} {\it Global Estimate:}\\
We aim for a global version of \eqref{3.eq5}. To this end, note that $\overline{\Omega}_0$ is compact so that there are $x_1, \dots, x_m \in \overline{\Omega}_0$ with $$\overline{\Omega}_0 \subset \bigcup_{i=1}^m \mathbb{B}(x_i,r).$$ 
Let $\lbrace \theta_i \, \vert \, i=1, \dots, m \rbrace$ be a smooth partition of unity on $\overline{\Omega}_0$ subordinated to $\bigcup_{i=1}^m \mathbb{B}(x_i,r)$. For $\phi  \in W^2_{2,D}(\Omega) $,
it follows that $\theta_i \phi  \in W^2_{2,D}(\Omega)$ with support in $\mathbb{B}(x_i,r) \cap \Omega$ and
\begin{align}
 \Vert \phi \Vert_{W^2_2(\Omega)} 
 &\leq C \big (\Vert A \Vert_{W^1_q(\Omega_0)} , n,q,\Omega_0,\alpha \big )\,\sum_{i=1}^m \big \Vert \mathrm{div} \big ( A(\,\cdot\,) \nabla (\theta_i \phi  ) \big ) \big \Vert_{L_2(\Omega)}\label{3.eq6}
\end{align}
thanks to \eqref{3.eq5}. For the right-hand side, we compute 
\begin{align*}
 \mathrm{div} \big ( A(x) \nabla (\theta_i \phi  ) \big ) &=\theta_i \,\mathrm{div} \big (A(x) \nabla \phi  \big ) +\sum_{j=1}^n \Big ( (\partial_j \phi )\big [A(x) \nabla \theta_i \big ]_j + (\partial_j \theta_i) \big [A(x) \nabla \phi  \big ]_j \Big )\\
 &+ \phi \,\mathrm{div} \big ( A(x) \nabla \theta_i \big ) \\
&=:I+II+III\,.
\end{align*}
Since
\begin{align*}
 \Vert \theta_i \Vert_{C^2(\overline{\Omega}_0)} \leq C \big ( \Vert A \Vert_{W^1_q(\Omega_0)} , q,n,\Omega_0,\alpha \big)
\end{align*}
for each $i=1, \dots,m$, we find 
\begin{align}
 \Vert I \Vert_{L_2(\Omega)} \leq C \big ( \Vert A \Vert_{W^1_q(\Omega_0)} , q,n,\Omega_0,\alpha \big) \, \big \Vert \mathrm{div} \big (A(\,\cdot\,) \nabla \phi \big ) \big \Vert_{L_2(\Omega)} \label{3.eq7}
\end{align}
as well as
\begin{align}
 \Vert II \Vert_{L_2(\Omega)} &\leq 2n^2 \Vert \theta_i \Vert_{C^1(\overline{\Omega}_0)} \, \Vert A \Vert_{C (\overline{\Omega}_0)} \, \Vert \nabla \phi \Vert_{L_2(\Omega)}\notag \\
 &\leq C \big ( \Vert A \Vert_{W^1_q(\Omega_0)} , q,n,\Omega_0,\alpha \big) \, \Vert \nabla \phi \Vert_{L_2(\Omega)} \,,
\end{align} 
where we used that $W^1_q(\Omega_0) \hookrightarrow C ( \overline{\Omega}_0)$. For $III$, we compute further
$$III= \sum_{j,k=1}^n \phi \, \partial_j a_{jk} \,\partial_k \theta_i + a_{jk}\, \phi \,\partial_j\partial_k \theta_i\,,$$
and hence
\begin{align*}
 \Vert III \Vert_{L_2(\Omega)} \leq C \big ( \Vert A \Vert_{W^1_q(\Omega_0)} , q,n,\Omega_0,\alpha \big )  \, \sum_{j,k=1}^n \Big (\Vert \phi \,\partial_j a_{jk} \Vert_{L_2(\Omega)} + \Vert \phi  \Vert_{L_2(\Omega)} \Big )\,,
\end{align*}
where we applied the embedding $W^1_q(\Omega_0) \hookrightarrow C (\overline{\Omega}_0)$ once more. Since
\begin{align*}
 \Vert \phi \,\partial_j a_{jk} \Vert_{L_2(\Omega)}^2 = \int_\Omega \phi ^2 (\partial_j a_{jk})^2 \, \mathrm{d} x &\leq \, \Vert \phi  \Vert_{L_{\frac{2q}{q-2}}(\Omega)}^2 \Vert A \Vert_{W^1_q(\Omega_0)}^2 \\
 &\leq C \big ( \Vert A \Vert_{W^1_q(\Omega_0)} , q,n,\Omega_0\big ) \Vert \phi \Vert_{W^1_2(\Omega)}^2
\end{align*}
due to Hölder's inequality and Sobolev's embedding theorem~\ref{-1}, we arrive at 
\begin{align}
 \Vert III \Vert_{L_2(\Omega)} \leq C \big ( \Vert A \Vert_{W^1_q(\Omega_0)} , q,n,\Omega_0,\alpha \big ) \Vert \phi  \Vert_{W^1_2(\Omega)}\,. \label{3.eq8}
\end{align}
Plugging the estimates for $I$ to $III$ in \eqref{3.eq7}-\eqref{3.eq8} back into \eqref{3.eq6}, we find that 
\begin{align*}
 \Vert \phi  \Vert_{W^2_2(\Omega)} \leq C \big ( \Vert A \Vert_{W^1_q(\Omega_0)} , q,n,\Omega_0,\alpha \big )  \, \Big (  \big \Vert \mathrm{div} \big (A(\,\cdot\,) \nabla \phi  \big ) \big \Vert_{L_2(\Omega)} + \Vert \phi  \Vert_{W_2^1(\Omega)} \Big ) 
\end{align*}
for $\phi \in  W^2_{2,D}(\Omega)$. Eventually, we can apply the same steps which lead to \eqref{3.eq5} to eliminate the $W^1_2$-norm of $\phi$ on the right-hand side.
\end{proof}

Based on the improved a-priori estimate \eqref{3.eq0}, we can complete the proof of Theorem~\ref{5}\,. \\

{\bf Proof of Theorem \ref{5}.}
{\bf(i)} {\it Approximation of the domain:} \\
Suppose first that $A \in \big [C^\infty(\overline{\Omega}_0)\big ]^{n\times n}$. Then, we can follow the lines of the proof of \cite[Theorem 3.2.1.2]{Grisvard85} with \cite[Equation (3.2.1.3)]{Grisvard85} replaced by the improved a-priori estimate from Proposition~\ref{3}. Hence, problem \eqref{4.eq0} has a unique solution $\phi  \in W^2_{2,D}(\Omega_0)$ 
which additionially satisfies the estimate 

\begin{align}
 \Vert \phi  \Vert_{W^2_2(\Omega_0)} \leq C \big ( q,n, \Omega_0, \Vert A \Vert_{W^1_q(\Omega)} , \alpha \big )\,  \Vert F \Vert_{L_2(\Omega_0)} \,. \label{4.eq1} \\ \notag
\end{align}

{\bf(ii)} {\it Approximation of the coefficients:}\\
Now we treat the general case $A \in \big [W^1_q(\Omega_0) \big ]^{n \times n}$ with $q > n$. Recall that $\Omega_0$ has a Lipschitz boundary so that we find a sequence $(A^{(m)})  \subset \big [ C^\infty(\overline{\Omega}_0) \big ]^{n \times n}$ 
such that each $A^{(m)}(x)$ is symmetric and $A^{(m)} \rightarrow A$ in $\big [W^1_q(\Omega_0) \big ]^{n \times n}$. Moreover, we may assume that 
$$\sup_m \Vert A^{(m)} \Vert_{ W^1_q(\Omega_0)} \leq 2 \Vert A \Vert_{W^1_q(\Omega_0)}\,.$$
It remains to arrange that the $(A^{(m)})$ have a common ellipticity constant. To this end, note that $q >n$ implies that 
\begin{align*}
 \big \vert \xi^T A(x) \xi - \xi^T A^{(m)}(x) \xi \big \vert &\leq \vert \xi^T \vert \, \Vert A-A^{(m)} \Vert_{C (\overline{\Omega}_0)} \vert \xi \vert \\
 &\leq C \, \Vert A-A^{(m)} \Vert_{W^1_q(\Omega_0)} \rightarrow 0
\end{align*}
for $m \rightarrow \infty$ and for each $\xi \in \mathbb{R}^n$ with $\vert \xi \vert =1$ and $x \in \overline{\Omega}_0$. Hence, we may assume without loss of generality that
\begin{align*}
 \big \vert \xi^T A(x) \xi - \xi^T A^{(m)}(x) \xi \big \vert \leq \alpha/2  \,, \qquad \vert \xi \vert =1\,, \ x \in \overline{\Omega}_0 \,, \ m \in \mathbb{N}\,,
\end{align*}
which immediately implies that each $A^{(m)}$ is uniformly elliptic with a common ellipticity constant $\alpha/2$.

Now it follows from part (i) that there exists a unique solution $\phi_m \in W^2_{2,D}(\Omega_0)$ to the problem
\begin{align*}
\begin{cases}
 \ -\mathrm{div} \big ( A^{(m)}(x)\nabla \phi_m \big)&= F \quad \text{in} \quad \Omega_0 \,, \\ 
  \qquad \qquad \qquad \quad \phi_m &=0 \quad \text{on} \quad \partial \Omega_0
  \end{cases}
\end{align*}
with 
\begin{align}
 \Vert \phi_m \Vert_{W^2_2(\Omega_0)} \leq C \big ( q,n, \Omega_0, \Vert A \Vert_{W^1_q(\Omega)} , \alpha \big )\,  \Vert F \Vert_{L_2(\Omega_0)}\,, \qquad m \in \mathbb{N} \,, \label{4.eq2}
\end{align}
due to \eqref{4.eq1}. Hence, we find a subsequence $(\phi_m)$ and $\phi  \in W^2_{2,D}(\Omega_0)$ with $\phi_m \rightarrow \phi$ in $W^1_2(\Omega_0)$ and $\phi_m \rightharpoonup \phi$ in $W^2_2(\Omega_0)$. Letting $m \rightarrow \infty$ in the weak formulation
\begin{align*}
  \int_{\Omega_0} \nabla \phi_m^T A^{(m)}(x) \nabla \varphi \, \mathrm{d} x = \int_{\Omega_0} F \varphi \,\mathrm{d} x \,, \qquad \varphi \in \mathcal{D}(\Omega_0)\,,
\end{align*}
we see that $\phi$ is a solution to \eqref{4.eq0}. It is unique due to the Riesz Representation Theorem. Finally, estimate \eqref{4.eq0b} follows from \eqref{4.eq2} and the weak lower semi-continuity of $\Vert \, \cdot \, \Vert_{W^2_2(\Omega_0)}$.
\qed

\bibliographystyle{siam}
\bibliography{BibliographyDoc}
\end{document}